\DeclareMathAlphabet{\mathpzc}{OT1}{pzc}{m}{it}
\newcommand{\dd}{\mathrm{d}}
\newcommand{\tr}{\mathrm{tr}_\Omega}
\newcommand{\Tr}{\mathrm{Tr}_\Omega\,}
\newcommand{\TR}{\mathrm{TR}_\Omega\,}
\def\dif{\:\mathrm{d}}
\def\0{\emptyset}
\def\tr{\mathrm{tr}_\Omega\:}
\def\dif{\:\mathrm{d}}
\def\dif{\:\mathrm{d}}
\newtheorem{theorem}{Theorem}[section]
\newtheorem{definition}[theorem]{Definition}
\newtheorem{example}[theorem]{Example}
\newtheorem{lemma}[theorem]{Lemma}
\newtheorem{proposition}[theorem]{Proposition}
\newtheorem{remark}[theorem]{Remark}
\numberwithin{equation}{section}
\begin{document}

\title{The  Spatially Variant Fractional Laplacian}

\thanks{
{CNR was partially supported by NSF grant 2012391. CNR was also been supported via the framework of MATHEON by the Einstein Foundation Berlin within the ECMath project SE15/SE19 and of the Excellence Cluster Math+ Berlin within project AA4-3 and the transition project OT6, and acknowledges the support of the DFG through the DFG-SPP 1962: Priority Programme ``Non-smooth and Complementarity-based Distributed Parameter Systems: Simulation and Hierarchical Optimization''  within Project 11. ANC was partially supported by Air Force Office of Scientific Research under Award NO: FA9550-19-1-0036, by CONICET grant PIP 0032CO, and by ANPCyT grant PICT 2016-1022.}
}

\thanks{The authors are very grateful to Prof. Harbir Antil from George Mason University for the valuable discussions on the paper.}


\author{Andrea N. Ceretani}
\address{Andrea N. Ceretani. Department of Mathematics of the Faculty of Exact and Natural Sciences, University of Buenos Aires, and Mathematics Research Institute ``Luis A. Santal\'o'' (IMAS), CONICET, Argentina.}
\email{aceretani@dm.uba.ar}

\author{Carlos N. Rautenberg}
\address{Carlos N. Rautenberg. Department of Mathematical Sciences and the Center for Mathematics and Artificial Intelligence (CMAI), George Mason University, Fairfax, VA 22030, USA.}
\email{crautenb@gmu.edu}

\begin{abstract}
We introduce a definition of the fractional Laplacian $(-\Delta)^{s(\cdot)}$ with spatially variable order $s:\Omega\to [0,1]$ and study the solvability of the associated Poisson problem on a bounded domain $\Omega$. The initial motivation arises from the extension results of Caffarelli and Silvestre, and Stinga and Torrea; however the analytical tools and approaches developed here are new. For instance, in some cases we allow the variable order $s(\cdot)$ to attain the values $0$ and $1$ leading to a framework on weighted Sobolev spaces with non-Muckenhoupt weights.  
Initially, and under minimal assumptions, the operator $(-\Delta)^{s(\cdot)}$ is identified as the Lagrange multiplier corresponding to an optimization problem; and its domain is determined as a quotient space of weighted Sobolev spaces. The well-posedness of the associated Poisson problem is then obtained for data in the dual of this quotient space. Subsequently, two trace regularity results are established, allowing to partially characterize functions in the aforementioned quotient space whenever a Poincar\'e type inequality is available. Precise examples are provided where such inequality holds, and in this case the domain of the operator $(-\Delta)^{s(\cdot)}$ is identified with a subset of a weighted Sobolev space with spatially variant smoothness $s(\cdot)$. The latter further allows to prove the well-posedness of the Poisson problem assuming functional regularity of the data.
\end{abstract}

\keywords{
fractional order Sobolev space, spatially varying exponent, trace theorem, fractional
Laplacian with variable exponent, Hardy-type inequalities
}

\subjclass[2010]{
    35S15, 26A33, 65R20
 }   
 
 \maketitle

\section{Introduction}
The goal of this work is twofold: (i) introduce the spectral fractional Laplacian $(-\Delta)^{s(\cdot)}$ {associated with a homogeneous Dirichlet condition} on a bounded domain $\Omega\subset\mathbb{R}^N$, $N\geq 1$, in the case the fractional order $s(\cdot)$ is spatially variable and possibly attains the values $0$ and $1$; (ii) study the well-posedness of the equation 
\begin{equation}\label{Eq:IntroElliptic}
\begin{split}
(-\Delta)^{s(\cdot)}v&=h \quad \text{ in } \Omega,\\
 v& = 0 \quad  \text{ on } \partial \Omega,
\end{split}
\end{equation}
for some classes of data $h$, and where $v=0$ is understood in an appropriate sense.

Motivated by the extension approach in $\mathbb{R}^N$ by Caffarelli and Silvestre \cite{CaSi2007}, or in bounded domains by Stinga and Torrea \cite{StTo2010}, we define $(-\Delta)^{s(\cdot)}$ to be the Lagrange multiplier associated to a suitable {variational problem defined in an extended domain}, for measurable functions $s(\cdot)$ with range contained in the interval $[0,1]$. For a general class of functions $s(\cdot)$, the domain of $(-\Delta)^{s(\cdot)}$ can be identified with a quotient space $\mathscr{X}(\Omega,w)$ involving weighted Sobolev spaces,
\begin{equation}\label{Eq:IntroDomain}
\mathscr{X}(\Omega,w):=\mathscr{L}_{0,L}^{1,2}(\mathcal{C},w)/\mathscr{L}_0^{1,2}(\mathcal{C},w),
\end{equation}
where $\mathcal{C}=\Omega\times (0,+\infty)$ is the open semi-infinite cylinder {(the extended domain)} with base $\Omega$, and $w$ is a specific weight function. Roughly speaking, the spaces $\mathscr{L}_{0,L}^{1,2}(\mathcal{C},w)$ and $\mathscr{L}_0^{1,2}(\mathcal{C},w)$ are composed {of} functions that vanish on the lateral boundary of $\mathcal{C}$, and  on the whole boundary {(including the base $\Omega$)}, respectively. Equation \eqref{Eq:IntroElliptic} is then solvable for every $h$ in the dual space of $\mathscr{X}(\Omega,w)$. For a smaller class of $s(\cdot)$, the domain can be identified as a subset of a weighted Lebesgue space $L^2(\Omega, \tilde{w})$ for some function $\tilde{w}$, and the equation \eqref{Eq:IntroElliptic} is  solvable when the right hand side is in  $L^2(\Omega, \tilde{w})$. For an even smaller class of functions $s(\cdot)$, this result is {further} improved since the domain of $(-\Delta)^{s(\cdot)}$ is identified with a subset of a weighted Sobolev space of functions with spatially variable smoothness, related to $s(\cdot)$.

The main application that has motivated this work, in addition to the natural theoretical interest, is the recent paper \cite{AnRa2019}. There, initial results on an extension approach in Hilbert spaces on an open cylinder with base $\Omega$ are given. However, the authors stopped short of defining $(-\Delta)^{s(\cdot)}$ due to the lack of a proper functional framework. The current paper aims to fill this gap. It is worth mentioning that none of the existing results in the literature are applicable to our case and new PDE and variational analysis tools are needed to study the current situation. For example, the extension approaches in \cite{CaSi2007,StTo2010} assume $s \in (0,1)$ to be a constant and avoid the extreme cases of $0$ and $1$. In this setting, the nonlocal problem $(-\Delta)^{s}v=h$ in $\Omega$, where $(-\Delta)^s$ is the $s$-power of the realization of $-\Delta$ in $L^2(\Omega)$ with zero Dirichlet boundary conditions, can be equivalently formulated as a local one on a Sobolev space with a Muckenhoupt weight. On the other hand, our $s(\cdot)$ is a function which is allowed to touch the extreme cases 0 and 1 and therefore, the associated weights do not fulfill the Muckenhoupt property \cite[Proposition~1]{AnRa2019}. In particular, fundamental results of type ``$H=W$" or Poincar\'e inequalities are not known in our case, leading to a more complex functional analytic framework.

The literature concerning possible definitions of $(-\Delta)^{s(\cdot)}$ with non-constant $s$ is restricted to the stochastic processes and stochastic calculus approaches and considers always the unbounded case $\Omega=\mathbb{R}^N$; see the monograph \cite{Applebaum_2009} and the references therein. By means of the L\'{e}vy-Khintchine representation formula, and the Fourier transform, the operator is determined to be of L\'{e}vy type. However, strong additional assumptions on $s(\cdot)$ are required to show that the operator is associated to a Feller or a Markov process. To name a few, these include assuming that $s(\cdot)$ is Lipschitz continuous and satisfies $\varepsilon\leq s(\cdot)\leq 1-\varepsilon$ for some $\varepsilon\in(0,1)$;  
see 
\cite[Example 3.5.9]{Applebaum_2009}. Neither of these restrictions are present in this work. 

The paper is further motivated by several applications. The extension approach with spatially varying $s(\cdot)$ has shown
remarkable potential in image denoising: A rough choice of $s(\cdot)$ performs better than an optimal selected regularization parameter in total variation approaches; see \cite{AnRa2019}. This is indeed a game changer, especially the variable $s(\cdot)$ approach can enable one to replace the nonlinear (and degenerate) Euler-Lagrange equations in case of total variation by a linear one in the case of the variable fractional.
{The variable $s(\cdot)$ approach can be also applied in geophysics: Models governed by a fractional Helmholtz equation (with constant fractional order) have shown good qualitative agreement with available magnetoteluric data, see \cite{CWeiss_BvBWaanders_HAntil_2018a}. Given the spatially long-range correlated heterogeneity of the medium, nonlocal models with spatially varying fractional order $s(\cdot)$ appear as an atractive tool to further obtain quantitative agreement.}

\medskip
\noindent
{\bf Outline}. The notation and main assumptions we make, specially those for the variable exponent $s(\cdot)$, are specified in Section \ref{Sect:Assumptions}. In Section \ref{Sect:Constant} we provide a succinct idea of the approach that we follow to study the fractional Laplacian with spatially variable order, $(-\Delta)^{s(\cdot)}$, which is motivated by well-known results for the usual spectral fractional Laplacian. 

Our main results begin from Section \ref{Sect:Abstract}, where we introduce a definition of $(-\Delta)^{s(\cdot)}$ on the quotient space $\mathscr{X}(\Omega,w)$. Also in this section we prove {the} existence and uniqueness of a solution $v\in\mathscr{X}(\Omega,w)$ to the associated Poisson problem \eqref{Eq:IntroElliptic} for every $h$ in the dual space of $\mathscr{X}(\Omega,h)$. It is worth mentioning that the results in Section \ref{Sect:Abstract} require minimal conditions on the function $s(\cdot)$, the weight $w$, and the domain $\Omega$. The results given in Section \ref{Sect:Abstract}, however, do not provide conditions for solvability of the Poisson problem when the right hand side of the elliptic equation is a (regular) real valued function defined only on $\Omega$.

In a second approach, we are able to better identify the domain of $(-\Delta)^{s(\cdot)}$  as  a quotient space also, now on a Sobolev space $\mathscr{H}_{0,L}^{1,p}(\mathcal{C},w)$ that consist of functions in $W^{1,p}(\mathcal{C},w)$ that formally vanish on the lateral boundary of $\mathcal{C}$. Differently from the construction given in Section \ref{Sect:Abstract}, this second approach requires some extra conditions on both, $s(\cdot)$ and $\Omega$. These conditions are intimately related with the existence of $\Omega$-trace results for functions in $\mathscr{H}_{0,L}^{1,2}(\mathcal{C},w)$, as well as with the existence of a Poincar\'e inequality in $\mathscr{H}_{0,L}^{1,2}(\mathcal{C},w)$;  thus, we postpone the second construction until Section \ref{Sect:Elliptic}.  

In Section \ref{Sect:Traces}, we first study the $\Omega$-traces of functions in $\mathscr{H}_{0,L}^{1,p}(\mathcal{C},w)$, for $2\leq p<\infty$. In particular, we are able to characterize $s(\cdot)$-dependent  integrability  and differential regularity of restrictions of functions in $\mathscr{H}_{0,L}^{1,p}(\mathcal{C},w)$ to $\Omega$. Subsequently, we are able to prove the existence of a Poincar\'e inequality for $\mathscr{H}_{0,L}^{1,p}(\mathcal{C},w)$ in Section \ref{Sect:PoincareIneq}, for a special class of non-constant $s(\cdot)$ functions. 

Our results finish in Section \ref{Sect:Elliptic}, where the details on the second definition of $(-\Delta)^{s(\cdot)}$ are given. Here, we identify the domain of $(-\Delta)^{s(\cdot)}$ with a subset of a weighted Lebesgue space $L^2(\Omega,\tilde{w})$ for some weight $\tilde{w}$, provided $s(\cdot)$ vanishes only on a set of zero measure and a Poincar\'e inequality holds for functions in $\mathscr{H}_{0,L}^{1,2}(\mathcal{C},w)$. Further, we improve this result for the case when $\Omega$ is the $N$-dimensional unit square and $s(\cdot)$ satisfies some extra conditions. In this latter case we identify the domain of $(-\Delta)^{s(\cdot)}$ with a subset of a Sobolev space of functions with variable smoothness on $\Omega$. The paper closes with Section \ref{Sect:Conclusions} that includes, in addition to conclusions, a number of open questions and future research directions. 

{More general elliptic operators of the form
\begin{equation*}
(-\mathrm{div} A\nabla)^{s(\cdot)}v=h,
\end{equation*} 
with spatially variable fractional order $s(\cdot)$, can be defined by extending the ideas in this paper in a natural way.}

\section{Notation and main assumptions}\label{Sect:Assumptions}
We assume that $\Omega\subset\mathbb{R}^N$, $N\geq 1$, is a non-empty bounded open set with a Lipschitz boundary {$\partial\Omega$} (except in Section \ref{Sect:Abstract}, where no condition is imposed on the $\Omega$ boundary). We denote by $\mathcal{C}$ the open semi-infinite cylinder with base $\Omega$, by $\partial_L\mathcal{C}$ the lateral boundary of $\mathcal{C}$, and by $\mathcal{C}_\Omega$ the cylinder $\mathcal{C}$ with the base $\Omega$, that is,
\begin{equation*}
\mathcal{C}=\Omega\times(0,\infty),\qquad\qquad
\partial_L\mathcal{C}=\partial\Omega\times[0,\infty),\qquad\qquad
\mathcal{C}_\Omega=\mathcal{C}\cup(\Omega\times\{0\}).
\end{equation*}
A generic point $X$ in $\mathbb{R}^{N+1}$ is denoted by $(x,y)$, where $x\in\mathbb{R}^N$ and $y\in\mathbb{R}$.

A function $\rho$ is said to be a weight if $\rho$ is positive and finite almost everywhere. For an open set $U$, and a weight $\rho$, we denote by $L^p(U,\rho)$ the space of measurable functions $u:U\to \mathbb{R}$ such
\begin{equation*}
	\|u\|_{L^p(U,\rho)}:=\left(\int_U |u(x)|^p\rho(x)\dif x\right)^{1/p}<+\infty.
\end{equation*}
The space $L^p(U,\rho)$  endowed with the norm $\|\cdot\|_{L^p(U,\rho)}$ is a Banach space. Further, given $p\in [2,+\infty)$ we say that a weight $\rho$ satisfies the $B_p$ condition, and write $\rho\in B_p$, if $\rho^{-1/{p-1}}$ is locally integrable, that is, 
\begin{equation*}
	\rho\in B_p \qquad \Leftrightarrow \qquad \rho^{-1/(p-1)}\in L^1_{\mathrm{loc}}(U).
\end{equation*}
For a weight $\rho\in B_p$, we define the weighted Sobolev space $W^{1,p}(U,\rho)$ as the subset of $L^p(U,\rho)$ of functions $u$ with weak gradients $\nabla u$ such that  $|\nabla u|\in L^p(U,\rho)$.  Endowed with the norm 
\begin{equation*}
	\|u\|_{W^{1,p}(U,\rho)}:=\left(\int_U |u(x)|^p\rho(x)\dif x+\int_U |\nabla u(x)|^p\rho(x)\dif x\right)^{1/p}<+\infty,
\end{equation*}
$W^{1,p}(U,\rho)$ is a Banach space; see \cite{Ku1984}. Notice that $B_p$ is a larger class of weights than the Muckenhoupt weights $A_p$. The latter is also used to define weighted Sobolev spaces; see \cite{turesson2000nonlinear}. Throughout the paper we assume $p\in[2,\infty)$ and denote the  (H\"older) conjugate exponent of $p$ by $p'$. 

The measurable function $s(\cdot):\Omega\to\mathbb{R}$, which will characterize the spatially variable order of the fractional Laplacian, is assumed to satisfy:   
\begin{enumerate}[label=(\sc{H\arabic*})]
\item[]
\item \label{H1} $s(x)\in[0,1]$ for almost all $x\in\Omega$.
\item[]
\end{enumerate}
We use the notation $s(\cdot)$ to emphasize the dependence of the function $s:\Omega\to\mathbb{R}$ on the spatial variable $x \in \Omega$, and use $s$ to denote a constant in the interval $(0,1)$.        

Throughout the paper we consider the function $w:\mathcal{C}\to\mathbb{R}$ defined by
\begin{equation*}
w(x,y)=\mathrm{G}_s(x)y^{1-2s(x)},
\end{equation*}
and such that for a given $s(\cdot)$, and $p$, the function $\mathrm{G}_s:\Omega\to\mathbb{R}$ satisfies that 
\begin{enumerate}[label=(\sc{H\arabic*}),resume]
\item[]
\item \label{H2} $\mathrm{G}_s\in B_p$, and if $s(\cdot)= s\in(0,1)$ constant, then 
\begin{equation*}
	\mathrm{G}_s(x)= \frac{2^{2s-1}\Gamma(s)}{\Gamma(1-s)},
\end{equation*} for all $x\in\Omega$. Here $\Gamma$ is the standard Euler-Gamma function.
\item[]
\end{enumerate}
Assumptions \ref{H1} and \ref{H2} imply that  $w\in B_p$. However, it is known that (in general) $w$ is not expected to be of Muckenhoupt type, see \cite[Proposition~1]{AnRa2019}.

Given $\tau>0$, we denote by $\mathcal{C}^\tau$ the truncated cylinder $\mathcal{C}$ of height $\tau$, that is,
\begin{equation*}
\mathcal{C}^\tau=\Omega\times(0,\tau),
\end{equation*}
and define the sets $\partial_L\mathcal{C}^\tau$ and $\mathcal{C}^\tau_\Omega$ accordingly. The restriction of the weight $w$ to $\mathcal{C}^\tau$ is also denoted by $w$.

\begin{example}\label{Ex:sG} 
A possible choice for the function $s(\cdot)$ is given by
$$s(x)=\sigma\min(\mathrm{dist}(x,\mathcal{B}),\varepsilon ),$$ 
where  $0<\varepsilon<1$,
$\mathcal{B}\subset \Omega$ is a closed subset with zero-measure of $\mathbb{R}^N$ and $\mathrm{dist}(x,\mathcal{B})=\inf\{|x-y|:y\in \mathcal{B}\}$ and $\sigma\in (0,1)$. 
This type of functions are useful in image processing where the set $\mathcal{B}$ is the approximated set of edges/discontinuities of a certain image that one tries 
to recover; see \cite{AnRa2019}.

The two examples for $\mathrm{G}_s$ that are of relevance to us are defined by
\begin{equation}\label{Eq:Gexamples}
\mathrm{G}^{(1)}_s(x)\,=\,2^{2\overline{s}-1}\frac{\Gamma(\overline{s})}{\Gamma(1-\overline{s})} \qquad \text{and}\qquad 
\mathrm{G}^{(2)}_s(x)\,=\,2^{2s(x)-1}\frac{\Gamma(s(x))}{\Gamma(1-s(x))},
\end{equation}
where $\overline{s}=\displaystyle\frac{1}{|\Omega|}\int_{\Omega} s(x)\mathrm{d}x$.  It follows that {\normalfont\ref{H2}} is satisfied given that $\sigma\in (0,1)$. 
\end{example}


\section{The extended domain approach}\label{Sect:Constant}
{This section is devoted to briefly review the well-known extension domain approach to define the spectral fractional Laplacian, see for instance \cite{CaSi2007,StTo2010,EDNezza_GPalatucci_EValdinoci_2012a}.}
Throughout this section, we assume that $s\in(0,1)$ is constant. 

We denote by $\{\lambda_n\}$ the sequence of eigenvalues of the Laplace operator supplemented with a Dirichlet boundary condition, and consider an orthonormal basis $\{\varphi_n\}$ of $L^2(\Omega)$ of associated eigenfunctions. The spectral fractional Laplacian is defined by
\begin{equation}\label{Eq:FractLaplacian}
(-\Delta)^sv=\displaystyle\sum_{n=1}^\infty
\lambda_n^sb_n\varphi_n\qquad\text{where}\quad b_n=\int_\Omega v\varphi_n\,\dd x,
\end{equation} 
on the space
\begin{equation*}
H=\left\{v=\displaystyle\sum_{n=1}^\infty b_n\varphi_n\in L^2(\Omega):\|v\|^2_H=\displaystyle\sum_{n=1}^\infty
\lambda_n^sb_n^2<\infty\right\}.
\end{equation*}
For extensions of \eqref{Eq:FractLaplacian} to non-homogeneous boundary conditions, we refer to \cite{AnPfRo2017}. 
{It is worth mentioning that {$H = \mathbb{H}^s_0(\Omega)$} if $s\in\left(0,\frac{1}{2}\right)\text{ or }s\in\left(\frac{1}{2},1\right)$
and $H = \mathbb{H}^s_{00}(\Omega)$ for $s=\frac{1}{2}$.} 
Here, $\mathbb{H}_0^s(\Omega)$ is the closure in $\mathbb{H}^s(\Omega)$ of the space of infinitely continuous differentiable functions with compact support in $\Omega$, and $\mathbb{H}^s_{00}(\Omega)$ is the Lions-Magenes space \cite{LTartar_2007a}.
{Moreover,} $\mathbb{H}^s(\Omega)$ is the fractional Sobolev space of order $s$, 
\begin{equation*}
\mathbb{H}^s(\Omega)=\left\{v\in L^2(\Omega):\int_\Omega\int_\Omega\frac{|v(x)-v(y)|^2}{|x-y|^{N+2s}}\,\dd x\,\dd y<\infty\right\},
\end{equation*}
endowed with the norm
\begin{equation*}
\|v\|_{\mathbb{H}^s(\Omega)}=\left(\int_\Omega|v|^2\,\dd x+\int_\Omega\int_\Omega\frac{|v(x)-v(y)|^2}{|x-y|^{N+2s}}\,\dd x\,\dd y\right)^{1/2}.
\end{equation*}

The extension approach introduced by Caffarelli and Silvestre \cite{CaSaSi2007}, see \cite{StTo2010,CaEtAl2011} for the case of bounded domains, establishes that if $h\in H'$ (dual space of $H$) then the unique solution to the elliptic equation
\begin{equation*}
\begin{split}
(-\Delta)^sv&=h \quad \text{ in } \Omega,\\
 v& = 0 \quad  \text{ on } \partial \Omega,
\end{split}
\end{equation*}
is given by $v=\tr\,u$, where $u\in H^1_{0,L}(\mathcal{C},y^{1-2s})$ satisfies
\begin{equation}\label{P00}
{ \langle h,\tr\psi\rangle_{H',H}}=\frac{2^{2s-1}\Gamma(s)}{\Gamma(1-s)}\int_{\mathcal{C}}y^{1-2s}\nabla u\cdot\nabla \psi\,\dif X,\qquad\qquad\forall\,\psi\in H^1_{0,L}(\mathcal{C},y^{1-2s}),
\end{equation}
see \cite[Lemma 2.2]{CaEtAl2011}. Here, {$\langle \cdot, \cdot \rangle_{H',H}$} denotes the dual pairing between $H'$ and $H$. Moreover, $\tr$ is the $\Omega$-trace operator for functions in the space
\begin{equation*}
H^1_{0,L}(\mathcal{C},y^{1-2s})=\left\{u\in H^1(\mathcal{C},y^{1-2s}):u=0\text{ on }\partial_L\mathcal{C}\text{ in the trace sense}\right\}.
\end{equation*}
More precisely,  
\begin{equation*}
\tr:\,H^1_{0,L}(\mathcal{C},y^{1-2s})\to \mathbb{H}_0^s(\Omega),
\end{equation*}
is the unique bounded linear operator that satisfies $\tr u=u(\,\cdot\,,0)$ for every $u\in C^\infty(\bar{\mathcal{C}})$ that vanishes on $\partial_L\mathcal{C}$; which is also onto over $H$, that is 
\begin{equation}\label{Eq:Trace}
\tr H^1_{0,L}(\mathcal{C},w)=H,
\end{equation} 
see \cite[Proposition 2.1]{CaEtAl2011}.

Additionally, since the minimization problem
\begin{equation}\label{P0}
\begin{split}
&\mathrm{minimize}\quad \frac{1}{2}\displaystyle\int_{\mathcal{C}}y^{1-2s}\,|\nabla u|^2\,\dd X\quad \mathrm{over}\quad  H^1_{0,L}(\mathcal{C},y^{1-2s}),\\
&\mathrm{subject\:\:to}\quad \tr u=v,
\end{split}
\end{equation}
admits a unique solution $u\in H^1_{0,L}(\mathcal{C},y^{1-2s})$ for any $v\in\tr H^1_{0,L}(\mathcal{C},w)$, the harmonic extension operator
\begin{equation*}
\mathcal{S}: \tr H^1_{0,L}(\mathcal{C},y^{1-2s}) \to  H^1_{0,L}(\mathcal{C}, y^{1-2s}),\qquad
v\mapsto \mathcal{S}(v)=u,
\end{equation*}
where $u$ is the solution to problem \eqref{P0}, is well-defined, linear, and bounded. Then one finds that the spectral fractional Laplacian given by \eqref{Eq:FractLaplacian} satisfies
\begin{equation}\label{Eq:StartingPoint}
{\langle (-\Delta)^sv,\tr\psi\rangle_{H',H}}=\frac{2^{2s-1}\Gamma(s)}{\Gamma(1-s)}\int_{\mathcal{C}}y^{1-2s}\nabla \mathcal{S}(v)\cdot\nabla \psi\,\dif X,
\end{equation}
for all $\psi\in H^1_{0,L}(\mathcal{C},y^{1-2s})$ and all $v\in H$, which provides an equivalent definition for $(-\Delta)^s$. This second approach is our starting point to study the fractional Laplacian with spatially variable order: We identify a space of traces on which we can define the fractional Laplacian $(-\Delta)^{s(\cdot)}$ by a formula analogous to \eqref{Eq:StartingPoint}. 


\section{Abstract definition and solution to $(-\Delta)^{s(\cdot)}v=h$}\label{Sect:Abstract}
We consider in this section an abstract derivation of the spatially variable fractional Laplacian $(-\Delta)^{s(\cdot)}$. The advantage of this initial approach is that it requires minimal assumptions, namely \ref{H1} and \ref{H2}, which are primarily sufficient conditions to have $w\in B_p$; this leads to an appropriate definition of the associated weighted Sobolev spaces. Also, it is worth noticing that the arguments in this section do not require any assumption on the regularity of the $\Omega$ boundary {$\partial\Omega$}. 
This path starts with the proper derivation of the trace space for the weighted Sobolev spaces in study. For this matter, 
{we consider the space}
\begin{equation*}
L^{1,2}(\mathcal{C},w)=\{u:\mathcal{C}\to\mathbb{R} \text{ measurable : } \nabla u\in L^2(\mathcal{C},w)  \},
\end{equation*}
and endow it with the semi-norm
\begin{equation*}
	\|u\|_{L^{1,2}(\mathcal{C},w)}:=\|\nabla u\|_{L^2(\mathcal{C},w)}.
\end{equation*}
Note that $u\mapsto \|u\|_{L^{1,2}(\mathcal{C},w)}$ is a norm on the subset of $C^1$ functions in $L^{1,2}(\mathcal{C},w)$ that vanish at $\partial \mathcal{C}$ or $\partial_L \mathcal{C}$. Subsequently, we define  $\mathscr{L}_{0,L}^{1,2}(\mathcal{C},w)$ and $\mathscr{L}_0^{1,2}(\mathcal{C},w)$ as the completion in $L^{1,2}(\mathcal{C},w)$ of the infinitely differentiable functions in $L^{1,2}(\mathcal{C},w)$ with compact support in $\mathcal{C}_\Omega$ and $\mathcal{C}$, respectively, that is:
\begin{align*}
\mathscr{L}^{1,2}_{0,L}(\mathcal{C},w):=\,& \:\text{completion of }\:{C^\infty_c(\mathcal{C}_\Omega)\cap L^{1,2}(\mathcal{C},w)} \:\text{ for }\: \|\cdot \|_{L^{1,2}(\mathcal{C},w)},\\[0.2cm]
\mathscr{L}^{1,2}_0(\mathcal{C},w):=\,& \:\text{completion of }\:{C^\infty_c(\mathcal{C})\cap L^{1,2}(\mathcal{C},w)} \:\text{ for   }\: \|\cdot \|_{L^{1,2}(\mathcal{C},w)},
\end{align*}
where
\begin{equation}\label{eg:Cinf}
C^\infty_c(\mathcal{C}_\Omega)=\{u\in C^\infty(\bar{\mathcal{C}}): \mathrm{supp}(u)\cap  \partial_L\mathcal{C}=\emptyset \}.
\end{equation}
The only portion of the boundary where functions in  $C^\infty_c(\mathcal{C}_\Omega)$ do not necessarily vanish is the $\Omega$ cap.  
A few words are in order concerning $\mathscr{L}_{0,L}^{1,2}(\mathcal{C},w)$ and $\mathscr{L}_0^{1,2}(\mathcal{C},w)$. Note that $C^\infty_c(\mathcal{C}_\Omega)\cap L^{1,2}(\mathcal{C},w)$ and $C^\infty_c(\mathcal{C})\cap L^{1,2}(\mathcal{C},w)$ are both pre-Hilbert spaces when endowed with the inner product
\begin{equation*}
	(u_1,u_2)_{_{L^{1,2}(\mathcal{C},w)}}=\int_{\mathcal{C}}w\:\nabla u_1\cdot\nabla u_2\dif X.
\end{equation*}
It follows then that their completion, $\mathscr{L}_{0,L}^{1,2}(\mathcal{C},w)$ and $\mathscr{L}_0^{1,2}(\mathcal{C},w)$, are Hilbert spaces; in particular for {$z_1,z_2\in \mathscr{L}_{0,L}^{1,2}(\mathcal{C},w)$ there} exist Cauchy sequences $\{z_1^n\}$ and $\{z_2^n\}$  in $C^\infty_c(\mathcal{C}_\Omega)\cap L^{1,2}(\mathcal{C},w)$ such that
\begin{equation*}
	(z_1,z_2)_{\mathscr{L}_{0,L}^{1,2}(\mathcal{C},w)}:=\lim_{n\to \infty}\int_{\mathcal{C}}w \:\nabla z_1^n\cdot\nabla z^n_2\dif X.
\end{equation*}
If there is no risk of confusion, and in order to simplify notation, occasionally we simply write 
\begin{equation*}
	(z_1,z_2)_{\mathscr{L}_{0,L}^{1,2}(\mathcal{C},w)}= \int_{\mathcal{C}}w \:\nabla z_1\cdot\nabla z_2\dif X, 
\end{equation*}
and analogously we treat $\mathscr{L}_{0}^{1,2}(\mathcal{C},w)$.

Given that $C^\infty_c(\mathcal{C})\cap L^{1,2}(\mathcal{C},w)\subset C^\infty_c(\mathcal{C}_\Omega)\cap L^{1,2}(\mathcal{C},w)$, then we observe that  $\mathscr{L}_0^{1,2}(\mathcal{C},w)$ is a closed subspace of $\mathscr{L}_{0,L}^{1,2}(\mathcal{C},w)$. Thus, we can define an abstract space of traces on $\Omega$ of functions in $\mathscr{L}_{0,L}^{1,2}(\mathcal{C},w)$ as the quotient space 
\begin{equation*}
\mathscr{X}(\Omega,w):=\mathscr{L}_{0,L}^{1,2}(\mathcal{C},w)/\mathscr{L}_0^{1,2}(\mathcal{C},w).
\end{equation*}
We then define 
\begin{equation*}
	\Tr u:=[u],
\end{equation*} i.e., the abstract trace on $\Omega$ of a function $u\in \mathscr{L}_{0,L}^{1,2}(\mathcal{C},w)$ is identified with the equivalence class $[u]$ that contains $u$. The space $\mathscr{X}(\Omega,w)$ is then endowed  with the usual norm
\begin{align*}
\|\Tr u\|_{\mathscr{X}(\Omega,w)}=\|[u]\|_{\mathscr{X}(\Omega,w)}:=\,&\inf\{ \|u-z\|_{\mathscr{L}_{0,L}^{1,2}(\mathcal{C},w))}:z\in \mathscr{L}_0^{1,2}(\mathcal{C},w)\}.
\end{align*}

Note that
\begin{equation}
	\Tr :\mathscr{L}_{0,L}^{1,2}(\mathcal{C},w)\to \mathscr{X}(\Omega,w),
\end{equation}
is a linear and bounded operator, and that $\mathscr{X}(\Omega,w)$ is a Hilbert space, given that $\mathscr{L}_{0,L}^{1,2}(\mathcal{C},w)$ and $\mathscr{L}_0^{1,2}(\mathcal{C},w)$ are also Hilbert spaces. We denote its inner product as $(\cdot,\cdot)_\mathscr{X}$.  Further notice that, by definition, $\Tr \mathscr{L}_{0,L}^{1,2}(\mathcal{C},w)=\mathscr{X}(\Omega,w)$. Unless it is not clear from the context, we denote the class $[v]\in\mathscr{X}(\Omega,w)$ simply by $v$. The following result establishes the existence of the harmonic extension operator.

\begin{theorem}\label{Thm:AbstractMinim}
Let $v\in \mathscr{X}(\Omega,w)$ and $\mu>0$. The minimization problem: 
\begin{equation}\label{Pmu}\tag{\text{$ \mathbb{P} _{\mu,v}$}}
\mathrm{minimize}\quad  J_\mu(u,v)\quad\mathrm{ over }\quad  \mathscr{L}_{0,L}^{1,2}(\mathcal{C},w),
\end{equation}
for
\begin{equation*}
J_\mu(u,v):=\frac{1}{2}\|u\|^2_{\mathscr{L}_{0,L}^{1,2}(\mathcal{C},w)}+
\frac{\mu}{2}\|\Tr u-v\|_{\mathscr{X}(\Omega,w)}^2,
\end{equation*}
admits a unique solution $u_\mu\in \mathscr{L}_{0,L}^{1,2}(\mathcal{C},w)$ that, as $\mu\to \infty$, converges strongly to the unique solution to
\begin{equation}\label{P}\tag{\text{$\mathbb{P}_v$}}
\begin{split}
&\mathrm{minimize}\quad J(u)\quad \mathrm{over}\quad  \mathscr{L}_{0,L}^{1,2}(\mathcal{C},w), \\
&\mathrm{subject\:\:to}\quad \Tr u=v,
\end{split}
\end{equation}
for
\begin{equation*}
J(u):=\frac{1}{2}\|u\|^2_{\mathscr{L}_{0,L}^{1,2}(\mathcal{C},w)}.
\end{equation*}
\end{theorem}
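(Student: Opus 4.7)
The plan is to handle the three assertions in order: first existence and uniqueness of $u_\mu$, then existence and uniqueness of the minimizer $\hat{u}$ of $(\mathbb{P}_v)$, then strong convergence $u_\mu \to \hat{u}$ via a standard penalty-type argument.

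For the penalized problem, I would apply the direct method on the Hilbert space $\mathscr{L}_{0,L}^{1,2}(\mathcal{C},w)$. The functional $J_\mu(\cdot,v)$ is the sum of two squared Hilbert norms, the second one composed with the bounded linear operator $\Tr$; it is therefore continuous and convex, strictly convex because of the first summand, and coercive since $J_\mu(u,v) \geq \tfrac{1}{2}\|u\|^2_{\mathscr{L}_{0,L}^{1,2}(\mathcal{C},w)}$. Standard arguments then yield a unique minimizer $u_\mu$. For the constrained problem, the admissible set $\Tr^{-1}(\{v\})$ is nonempty by surjectivity of $\Tr$ onto $\mathscr{X}(\Omega,w)$ established earlier in the section, and is a closed affine translate of the closed subspace $\mathscr{L}_0^{1,2}(\mathcal{C},w)$; minimizing the strictly convex, continuous, coercive functional $J$ over this set produces a unique minimizer $\hat{u}$, which is equivalently characterized as the orthogonal projection of any fixed representative onto $\mathscr{L}_0^{1,2}(\mathcal{C},w)^\perp$.

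For the passage to the limit, the key energy estimate is obtained by evaluating $J_\mu$ at the admissible competitor $\hat{u}$: since $\Tr\hat{u}=v$, the penalty term vanishes and optimality of $u_\mu$ gives
\begin{equation*}
\tfrac{1}{2}\|u_\mu\|^2_{\mathscr{L}_{0,L}^{1,2}(\mathcal{C},w)} + \tfrac{\mu}{2}\|\Tr u_\mu - v\|^2_{\mathscr{X}(\Omega,w)} \;\leq\; \tfrac{1}{2}\|\hat{u}\|^2_{\mathscr{L}_{0,L}^{1,2}(\mathcal{C},w)}.
\end{equation*}
This yields a uniform bound $\|u_\mu\|\leq\|\hat{u}\|$ and the rate $\|\Tr u_\mu - v\|_{\mathscr{X}(\Omega,w)} = O(\mu^{-1/2})$. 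Any sequence $\mu_k\to\infty$ then admits, by reflexivity, a subsequence along which $u_{\mu_k}\rightharpoonup u^*$ weakly; boundedness of $\Tr$ passes the identity $\Tr u^* = v$ to the limit, and lower semicontinuity gives $\|u^*\|\leq\liminf\|u_{\mu_k}\|\leq\|\hat{u}\|$, so $u^*=\hat{u}$ by uniqueness. Sandwiching then forces $\|u_{\mu_k}\|\to\|\hat{u}\|$, which combined with weak convergence upgrades to strong convergence in the Hilbert space; a subsequence-of-subsequences argument promotes this to convergence of the full family $(u_\mu)$ as $\mu\to\infty$.

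The step most worth watching is the identification $\Tr u^* = v$ in the limit: this relies on the penalty being measured in the quotient norm of $\mathscr{X}(\Omega,w)$ rather than in some trace or $L^2(\Omega)$-type norm, and on $\Tr$ being linear, bounded, and weakly continuous between Hilbert spaces. No trace inequality or Poincar\'e estimate on $\Omega$ itself is needed, which is fortunate because such results are not yet available at this stage of the paper.
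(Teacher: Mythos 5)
Your proof is correct and follows essentially the same route as the paper: direct method for existence/uniqueness of $u_\mu$, an energy estimate against the admissible competitor, weak compactness and lower semicontinuity to identify the limit, and a norm-convergence-plus-weak-convergence upgrade to strong convergence. The only stylistic differences are that you establish existence of the constrained minimizer $\hat{u}$ up front (as the orthogonal projection of any representative onto $\mathscr{L}_0^{1,2}(\mathcal{C},w)^\perp$) and use the elementary bound $\|\Tr u_\mu - v\|_{\mathscr{X}(\Omega,w)} = O(\mu^{-1/2})$ directly, whereas the paper extracts $\hat{u}$ as the weak subsequential limit and cites a penalty-method lemma (Luenberger) for $\mu\,\|\Tr u_\mu - v\|^2_{\mathscr{X}(\Omega,w)} \to 0$; both routes close the argument in the same way.
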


\begin{proof}
The existence of a solution $\{u_\mu\}$ to \eqref{Pmu} follows from arguments of the direct methods for calculus of variations: The functional $u\mapsto J_\mu(u,v)$ is non-negative, coercive, and weakly lower semicontinuous; for the latter part note that $\mathscr{L}_{0,L}^{1,2}(\mathcal{C},w)\ni w\mapsto \|\Tr w\|_{\mathscr{X}(\Omega,w)}$ is also weakly lower semicontinuous. Uniqueness follows from the strict convexity of $u\mapsto J_\mu(u,v)$. 

Since $v\in \mathscr{X}(\Omega,w)$, there exists $\tilde{u}\in  \mathscr{L}_{0,L}^{1,2}(\mathcal{C},w)$ such that $v=[\tilde{u}]=\Tr \tilde{u}$. Thus, given that $u_\mu$ is a minimizer of $J_\mu(\cdot, v)$, 
\begin{equation}\label{eq:umubound}
	J_\mu(u_\mu,v)\leq J_\mu(\tilde{u},v)=J(\tilde{u}),
\end{equation}
for every $\mu>0$. Then, by basic theory for penalty functions (see \cite[Lemma 1 in Chapter 10]{Luenberger:1997:OVS:524037}) we have that  
\begin{equation}\label{eq:P00}
\lim_{\mu\to\infty} \frac{\mu}{2}\|\Tr u_\mu-v\|_{\mathscr{X}(\Omega,w)}^2=0.
\end{equation} 

Thus, by \eqref{eq:umubound} we have that the sequence $\{u_\mu\}$ is bounded in $\mathscr{L}_{0,L}^{1,2}(\mathcal{C},w)$, so it admits a weakly convergent subsequence, say 
\begin{equation}\label{App-Eq:Weak}
u_{\mu'}\rightharpoonup u\qquad\text{in}\quad\mathscr{L}_{0,L}^{1,2}(\mathcal{C},w).  
\end{equation}
Further, by \eqref{eq:P00} we observe that $\Tr u=v$. Next we show that $J(u_\mu) \rightarrow J(u)$ with $u$ being the minimizer to \eqref{P}. 
By weak lower semicontinuity of $J$ and \eqref{eq:P00}, we observe:
\begin{align*}
J(u)&\leq \varliminf_{\mu'\to\infty} J(u_{\mu'})\leq \varlimsup_{\mu'\to\infty} J(u_{\mu'})=\varlimsup_{\mu'\to\infty} J_{\mu'}(u_{\mu'},v)\leq \varlimsup_{\mu'\to\infty} J_{\mu'}(u,v)=J(u),
\end{align*}
that is $J(u_{\mu'})\to J(u)$. The fact that $u$ is a minimizer to \eqref{P} follows by selecting an arbitrary $\tilde{u}$ such that $\Tr \tilde{u} =v$, then the previous to last inequality above yield 
\begin{align*}
J(u)& \leq \varlimsup_{\mu'\to\infty} J_{\mu'}(\tilde{u},v)=J(\tilde{u}),
\end{align*}
i.e.,  $u$ is a minimizer. Further, by strict convexity, minimizers to \eqref{P} are unique, so that the entire sequence $\{u_{\mu}\}$ satisfies 
\begin{equation}\label{App-Eq:Weak2}
u_{\mu}\rightharpoonup u\qquad\text{in}\quad\mathscr{L}_{0,L}^{1,2}(\mathcal{C},w),
\end{equation}
and also $J(u_\mu)\to J(u)$. Using \eqref{eq:P00}, this limit is equivalent to
\begin{equation*}
\lim_{\mu\to\infty}\|u_\mu\|_{\mathscr{L}_{0,L}^{1,2}(\mathcal{C},w)}=\|u\|_{\mathscr{L}_{0,L}^{1,2}(\mathcal{C},w)},
\end{equation*}
which together with \eqref{App-Eq:Weak2} implies that
\begin{equation}\label{App-Eq:Strong}
u_{\mu}\to u\qquad\text{in}\quad\mathscr{L}_{0,L}^{1,2}(\mathcal{C},w),
\end{equation}
 see \cite[Proposition 3.32]{Br2011}. 
\end{proof}

Theorem \ref{Thm:AbstractMinim} ensures the existence of the abstract weighted harmonic extension operator 
\begin{equation*}
S: \Tr \mathscr{L}_{0,L}^{1,2}(\mathcal{C},w) \to  \mathscr{L}_{0,L}^{1,2}(\mathcal{C},w),\qquad
v\mapsto S(v)=u.
\end{equation*}
where $u$ is the solution to \eqref{P}.
In addition, the map $S$ is linear and bounded: Linearity follows directly from the examination of the first order conditions. For boundedness, consider \eqref{eq:umubound} with $u-z$ instead of $\tilde{u}$, where $u$ solves \eqref{P} and $z\in\mathscr{L}_{0}^{1,2}(\mathcal{C},w)$, to obtain $J_\mu(u_\mu,v)\leq J(u-z)$. Then, by taking the limit as $\mu\to \infty$ we observe 
\begin{equation*}
	 \|S(v)\| _{\mathscr{L}_{0,L}^{1,2}(\mathcal{C},w)}=\|u\| _{\mathscr{L}_{0,L}^{1,2}(\mathcal{C},w)}\leq \|u-z\| _{\mathscr{L}_{0,L}^{1,2}(\mathcal{C},w)}.
\end{equation*}
Then, by considering the infimum over all $z\in \mathscr{L}_{0}^{1,2}(\mathcal{C},w)$, we obtain
\begin{equation*}
	\|S(v)\| _{\mathscr{L}_{0,L}^{1,2}(\mathcal{C},w)}\leq \|v\|_{\mathscr{X}(\Omega,w)}.
\end{equation*}

The well-posedness of the map $S$ allows us to establish a definition for the fractional Laplacian with spatially variable order.
   
\begin{definition}\label{Def:AbstractLaplacian}
Let $\mathscr{X}(\Omega,w)'$ be the dual space of $\mathscr{X}(\Omega,w)$. The operator 
\begin{equation*}
	(-\Delta)^{s(\cdot)}:\mathscr{X}(\Omega,w)\to\mathscr{X}(\Omega,w)',
\end{equation*} is determined as follows: for $v \in \mathscr{X}(\Omega,w)$, then $(-\Delta)^{s(\cdot)}v\in \mathscr{X}(\Omega,w)'$ is defined by 
\begin{equation}\label{Eq:AbstractLaplacian}
\langle(-\Delta)^{s(\cdot)}v, \Tr \psi \rangle_{\mathscr{X}',\mathscr{X}}=( S(v), \psi)_{\mathscr{L}_{0,L}^{1,2}(\mathcal{C},w)},\qquad\qquad\forall\, \psi\in \mathscr{L}_{0,L}^{1,2}(\mathcal{C},w).
\end{equation}
\end{definition}

\begin{remark}
\label{rem:DtN}
The relation of the above definition with the classical spectral fractional Laplacian \eqref{Eq:StartingPoint} is straightforward in light of the abuse of notation disclosed at the beginning of the chapter; in which case we can write
\begin{equation*}
\langle(-\Delta)^{s(\cdot)}v, \Tr \psi \rangle_{\mathscr{X}',\mathscr{X}}=\displaystyle\int_{\mathcal{C}}w\,\nabla S(v)\cdot\nabla \psi\,\dd X,\qquad\qquad\forall\,\psi\in \mathscr{L}_{0,L}^{1,2}(\mathcal{C},w).
\end{equation*}
Furthermore, by a formal integration-by-parts formula and using the fact that $S(v)$ is weighted harmonic, 
we obtain that $(-\Delta)^{s(\cdot)}$ is equal to the generalized Neumann trace of $S(v)$ when restricted
to $\Omega \times \{0\}$. 
{Then, as for the cassical case with constant order $s$, $(-\Delta)^{s(\cdot)}$ can be understood as a Dirichlet-to-Neumann map.}
\end{remark}

\begin{remark}
	In view of Theorem \ref{Thm:AbstractMinim}, the expression in \eqref{Eq:AbstractLaplacian} is equivalent to
	\begin{equation*}\label{Eq:AbstractLaplacianMu}
\langle(-\Delta)^{s(\cdot)}v, \Tr \psi \rangle_{\mathscr{X}',\mathscr{X}}=\lim_{\mu\to\infty}\mu( \Tr u_\mu-v, \Tr \psi)_{\mathscr{X}},\qquad\forall\, \psi\in \mathscr{L}_{0,L}^{1,2}(\mathcal{C},w),
\end{equation*}
where $u_\mu$ is the unique solution to \eqref{Pmu}.
\end{remark}

The operator $(-\Delta)^{s(\cdot)}:\mathscr{X}(\Omega,w)\to\mathscr{X}(\Omega,w)'$ is well-defined as we see next, and it can be seen as the Lagrange multiplier associated to the harmonic extension problem.
\begin{proposition}\label{prop:Lagrange}
For each $v \in \mathscr{X}(\Omega,w)$, there exists a unique $\lambda=\lambda(v)\in  \mathscr{X}(\Omega,w)'$ such that 
\begin{equation*}
\langle\lambda , \Tr \psi \rangle_{\mathscr{X}',\mathscr{X}}=(S(v),\psi)_{\mathscr{L}_{0,L}^{1,2}(\mathcal{C},w)},\qquad\qquad\forall\,\psi\in \mathscr{L}_{0,L}^{1,2}(\mathcal{C},w).
\end{equation*}
\end{proposition}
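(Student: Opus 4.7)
The plan is to first exhibit $\lambda$ via the prescribed formula, then verify that the prescription actually defines an element of $\mathscr{X}(\Omega,w)'$, and finally argue uniqueness. The candidate is
\[
\lambda(\Tr\psi):=(S(v),\psi)_{\mathscr{L}_{0,L}^{1,2}(\mathcal{C},w)},\qquad \psi\in\mathscr{L}_{0,L}^{1,2}(\mathcal{C},w).
\]
Since $\Tr:\mathscr{L}_{0,L}^{1,2}(\mathcal{C},w)\to\mathscr{X}(\Omega,w)$ is surjective by construction, this formula, once shown to be consistent, determines $\lambda$ on all of $\mathscr{X}(\Omega,w)$, which also yields uniqueness.

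The main obstacle, and the crux of the argument, is well-definedness: we must show that the right-hand side above depends only on the equivalence class $[\psi]=\Tr\psi$, i.e.\ that
\[
(S(v),z)_{\mathscr{L}_{0,L}^{1,2}(\mathcal{C},w)}=0\qquad\text{for every } z\in\mathscr{L}_0^{1,2}(\mathcal{C},w).
\]
This is exactly the first-order optimality condition for the constrained minimization problem $(\mathbb{P}_v)$ solved by $u=S(v)$. Indeed, $\mathscr{L}_0^{1,2}(\mathcal{C},w)$ is (by its very construction as the kernel of $\Tr$) the tangent space to the affine constraint set $\{u:\Tr u=v\}$: for any $z\in\mathscr{L}_0^{1,2}(\mathcal{C},w)$ and any $t\in\mathbb{R}$, $\Tr(S(v)+tz)=[S(v)+tz]=[S(v)]=v$. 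Hence the admissible perturbation $t\mapsto J(S(v)+tz)=\tfrac12\|S(v)+tz\|_{\mathscr{L}_{0,L}^{1,2}(\mathcal{C},w)}^2$ has a minimum at $t=0$, and differentiating gives the orthogonality displayed above.

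Given this, well-definedness and boundedness follow simultaneously. For well-definedness, if $\Tr\psi_1=\Tr\psi_2$, then $\psi_1-\psi_2\in\mathscr{L}_0^{1,2}(\mathcal{C},w)$ and the orthogonality forces $(S(v),\psi_1)=(S(v),\psi_2)$. For boundedness, fix $\psi\in\mathscr{L}_{0,L}^{1,2}(\mathcal{C},w)$; for every $z\in\mathscr{L}_0^{1,2}(\mathcal{C},w)$ we have $(S(v),\psi)=(S(v),\psi-z)$, whence by Cauchy--Schwarz
\[
|\lambda(\Tr\psi)|=|(S(v),\psi-z)_{\mathscr{L}_{0,L}^{1,2}(\mathcal{C},w)}|\le \|S(v)\|_{\mathscr{L}_{0,L}^{1,2}(\mathcal{C},w)}\,\|\psi-z\|_{\mathscr{L}_{0,L}^{1,2}(\mathcal{C},w)}.
\]
Taking the infimum over $z\in\mathscr{L}_0^{1,2}(\mathcal{C},w)$ yields
\[
|\lambda(\Tr\psi)|\le \|S(v)\|_{\mathscr{L}_{0,L}^{1,2}(\mathcal{C},w)}\,\|\Tr\psi\|_{\mathscr{X}(\Omega,w)},
\]
which together with the linearity of $\lambda$ (inherited from the inner product and from $\Tr$) shows $\lambda\in\mathscr{X}(\Omega,w)'$ with $\|\lambda\|_{\mathscr{X}'}\le\|S(v)\|_{\mathscr{L}_{0,L}^{1,2}(\mathcal{C},w)}$. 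Uniqueness of $\lambda$ is then immediate: any other $\tilde\lambda\in\mathscr{X}(\Omega,w)'$ satisfying the identity agrees with $\lambda$ on the range $\Tr\mathscr{L}_{0,L}^{1,2}(\mathcal{C},w)=\mathscr{X}(\Omega,w)$.
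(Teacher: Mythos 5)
Your proof is correct, and it takes a genuinely different route from the paper's. The paper argues abstractly: it writes the constraint in $(\mathbb{P}_v)$ as $G(u)=\mathrm{Tr}_\Omega\,u-v=0$, observes that $G'(u)=\mathrm{Tr}_\Omega$ is linear, bounded, and surjective (a constraint qualification), and then invokes the standard existence--uniqueness theorem for Lagrange multipliers in Hilbert space to produce $\lambda$ in one stroke. You instead construct $\lambda$ by hand: you define it on the range of $\mathrm{Tr}_\Omega$ by the candidate formula, reduce well-definedness to the identity $(S(v),z)_{\mathscr{L}^{1,2}_{0,L}(\mathcal{C},w)}=0$ for $z\in\mathscr{L}^{1,2}_0(\mathcal{C},w)$ (which you correctly extract from the first-order condition for $(\mathbb{P}_v)$, since $\mathscr{L}^{1,2}_0(\mathcal{C},w)=\ker\mathrm{Tr}_\Omega$ is the admissible direction space), obtain continuity from Cauchy--Schwarz combined with the infimum defining the quotient norm, and get uniqueness from surjectivity of $\mathrm{Tr}_\Omega$. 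The trade-off: the paper's appeal to the abstract Lagrange multiplier theorem is shorter and emphasizes the optimization-theoretic interpretation that the authors use throughout, while your argument is self-contained, makes the role of the quotient space and of the optimality condition fully explicit, and delivers the operator-norm bound $\|\lambda\|_{\mathscr{X}'}\le\|S(v)\|_{\mathscr{L}^{1,2}_{0,L}(\mathcal{C},w)}$, which the abstract theorem does not hand you for free.
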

\begin{proof}
Initially, note that $S(v)$ is the solution to \eqref{P}. For convenience, we write the constraint in \eqref{P} as $G(u)=0$, where $G: \mathscr{L}_{0,L}^{1,2}(\mathcal{C},w)\to \mathscr{X}(\Omega,w)$ is defined by $G(u)=\Tr u-v$. Since the operator $\Tr$ is linear and bounded, also it is $G$, and hence $G'(u)h=\Tr h$. Thus, $G'(u):\mathscr{L}_{0,L}^{1,2}(\mathcal{C},w)\to \mathscr{X}(\Omega,w)$ is linear, bounded, and surjective. Therefore, there exists a unique Lagrange multiplier $\lambda\in   \mathscr{X}(\Omega,w)'$ such that 
\begin{equation*}
	J'(S(v))\psi=\lambda \circ G'(S(v))\psi,\qquad\qquad\forall\,\psi\in \mathscr{L}_{0,L}^{1,2}(\mathcal{C},w),
\end{equation*}
which proves the statement. 
\end{proof}

In view of Remark \ref{rem:DtN}, we can also interpret $\lambda$ as the Neumann trace of the extension
onto $\Omega \times \{0\}$.

\begin{remark}
It follows that $(-\Delta)^{s(\cdot)}:\mathscr{X}(\Omega,w)\to\mathscr{X}(\Omega,w)'$  is a bounded linear operator given that $S$ is linear and bounded.	
\end{remark}

We are now able to determine existence of solutions to the Poisson problem with spatially variant Laplacian.

\begin{theorem}\label{Thm:AbstractElliptic} Let $h\in \mathscr{X}(\Omega,w)'$. The equation 
\begin{equation}\label{Eq:AbstractEq}
(-\Delta)^{s(\cdot)}v=h,
\end{equation}
admits a unique solution in $\mathscr{X}(\Omega,w)$ that is given by $v=\Tr\,u$, where $u$ solves
\begin{align}\label{Eq:AbstractMinEllip}
\mathrm{minimize}\quad \mathcal{J}(u)\quad \mathrm{over}\quad \mathscr{L}_{0,L}^{1,2}(\mathcal{C},w),
\end{align}
for 
\begin{equation*}
\mathcal{J}(u):=\frac{1}{2}\|u\|^2_{\mathscr{L}_{0,L}^{1,2}(\mathcal{C},w)} -\langle h, \Tr u\rangle_{\mathscr{X}', \mathscr{X}}.
\end{equation*} 
\end{theorem}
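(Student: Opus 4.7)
The plan is to use the direct method on $\mathcal{J}$, extract the first-order optimality conditions, and then verify that the trace of the minimizer solves the equation.

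First I would establish existence and uniqueness for the minimization problem \eqref{Eq:AbstractMinEllip}. Since $\Tr : \mathscr{L}_{0,L}^{1,2}(\mathcal{C},w)\to\mathscr{X}(\Omega,w)$ is linear and bounded and $h\in \mathscr{X}(\Omega,w)'$, the linear term in $\mathcal{J}$ is continuous. Coercivity follows from
\begin{equation*}
\mathcal{J}(u) \geq \tfrac{1}{2}\|u\|^2_{\mathscr{L}_{0,L}^{1,2}(\mathcal{C},w)} - \|h\|_{\mathscr{X}'}\,\|\Tr\|\,\|u\|_{\mathscr{L}_{0,L}^{1,2}(\mathcal{C},w)},
\end{equation*}
and $\mathcal{J}$ is strictly convex and weakly lower semicontinuous on the Hilbert space $\mathscr{L}_{0,L}^{1,2}(\mathcal{C},w)$. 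This yields a unique minimizer $u$ whose Euler--Lagrange equation reads
\begin{equation*}
(u,\psi)_{\mathscr{L}_{0,L}^{1,2}(\mathcal{C},w)} = \langle h,\Tr\psi\rangle_{\mathscr{X}',\mathscr{X}},\qquad \forall\,\psi\in \mathscr{L}_{0,L}^{1,2}(\mathcal{C},w).
\end{equation*}

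The key step then is to identify $u$ with the harmonic extension of its own trace, i.e.\ $u=S(\Tr u)$. I would test the Euler--Lagrange equation against arbitrary $z\in \mathscr{L}_0^{1,2}(\mathcal{C},w)$; since $\Tr z = 0$, this gives $(u,z)_{\mathscr{L}_{0,L}^{1,2}(\mathcal{C},w)}=0$, so $u$ is orthogonal to $\mathscr{L}_0^{1,2}(\mathcal{C},w)$. This orthogonality is precisely the first-order characterization of the solution to \eqref{P} for $v=\Tr u$, hence $u=S(v)$ with $v:=\Tr u$.

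Setting $v=\Tr u$ and invoking Definition \ref{Def:AbstractLaplacian},
\begin{equation*}
\langle (-\Delta)^{s(\cdot)}v,\Tr\psi\rangle_{\mathscr{X}',\mathscr{X}} = (S(v),\psi)_{\mathscr{L}_{0,L}^{1,2}(\mathcal{C},w)} = (u,\psi)_{\mathscr{L}_{0,L}^{1,2}(\mathcal{C},w)} = \langle h,\Tr\psi\rangle_{\mathscr{X}',\mathscr{X}}
\end{equation*}
for all $\psi \in \mathscr{L}_{0,L}^{1,2}(\mathcal{C},w)$. Because $\Tr$ maps $\mathscr{L}_{0,L}^{1,2}(\mathcal{C},w)$ onto $\mathscr{X}(\Omega,w)$ by construction, this forces $(-\Delta)^{s(\cdot)}v = h$ in $\mathscr{X}(\Omega,w)'$.

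For uniqueness I would argue directly from Definition \ref{Def:AbstractLaplacian}: if $v_1,v_2$ both solve \eqref{Eq:AbstractEq}, then by linearity $(-\Delta)^{s(\cdot)}(v_1-v_2)=0$, so $(S(v_1-v_2),\psi)_{\mathscr{L}_{0,L}^{1,2}(\mathcal{C},w)}=0$ for every $\psi$. Choosing $\psi=S(v_1-v_2)$ yields $S(v_1-v_2)=0$, and taking the abstract trace gives $v_1-v_2 = \Tr S(v_1-v_2) = 0$. The main obstacle I anticipate is the orthogonality step identifying $u$ with $S(\Tr u)$: it is routine in standard Sobolev settings, but here one must be careful that the quotient structure of $\mathscr{X}(\Omega,w)$ and the abstract character of $\Tr$ make ``$\Tr\psi = 0$ iff $\psi\in \mathscr{L}_0^{1,2}(\mathcal{C},w)$'' immediate from the very definition of the quotient, which is what makes the argument go through.
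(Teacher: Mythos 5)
Your proposal is correct and follows essentially the same route as the paper: direct method for the minimization, Euler--Lagrange equation, orthogonality against $\mathscr{L}_0^{1,2}(\mathcal{C},w)$ to identify $u=S(\Tr u)$, and then invoking Definition \ref{Def:AbstractLaplacian} together with surjectivity of $\Tr$. The only minor differences are cosmetic: where the paper tests against $C_c^\infty(\mathcal{C})$ and passes to $\mathscr{L}_0^{1,2}(\mathcal{C},w)$ by density, you test directly against $\mathscr{L}_0^{1,2}(\mathcal{C},w)$ using that $\Tr z=0$ holds there by the very definition of the quotient (which is sound and marginally cleaner), and for uniqueness you take $\psi=S(v_1-v_2)$ directly rather than recasting it as a minimization---equivalent arguments with the same content.
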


\begin{proof}
Since $\Tr$ is linear and bounded, we have that
	\begin{equation*}
		u\mapsto \langle h, \Tr u\rangle_{\mathscr{X}',\mathscr{X}},
	\end{equation*} 
is a linear functional over $\mathscr{L}_{0,L}^{1,2}(\mathcal{C},w)$. Then, there exists a solution  to the problem \eqref{Eq:AbstractMinEllip} and the solution is unique due to strict convexity of $\mathcal{J}$.
	
	Note that via necessary and sufficient conditions of optimality for \eqref{Eq:AbstractMinEllip}, the unique solution $u$ satisfies:
\begin{equation}
(u,\psi)_{\mathscr{L}_{0,L}^{1,2}(\mathcal{C},w)}=\langle h, \Tr \psi \rangle_{\mathscr{X}',\mathscr{X}},\qquad\forall\,\psi \in \mathscr{L}_{0,L}^{1,2}(\mathcal{C},w),
\label{Proof:AbstractOptCond}
\end{equation}
and then $u$ is identical to its harmonic extension, i.e., $u=S(\Tr u)$. To see the latter, we consider $\psi\in C_c^\infty(\mathcal{C})\cap \mathscr{L}_{0,L}^{1,2}(\mathcal{C},w)$ in \eqref{Proof:AbstractOptCond} and observe that by density
\begin{equation}\label{Eq:AbstractOptCondConseq}
(u,\psi)_{\mathscr{L}_{0,L}^{1,2}(\mathcal{C},w)}=0,\qquad\forall\,\psi \in \mathscr{L}_{0}^{1,2}(\mathcal{C},w),
\end{equation}
where we have used the fact that the functions in $C_c^\infty(\mathcal{C})$ vanish on $\Omega \times \{0\}$.
Moreover, we also (trivially) have $\Tr S(\Tr u)=\Tr u$ so that $u$ satisfies first order optimality conditions for \eqref{P} for $v=\Tr u$. 
Hence, by convexity (uniqueness) $u=S(\Tr u)$. Also, by definition of the operator $(-\Delta)^{s(\cdot)}$ and 
\eqref{Proof:AbstractOptCond}, we have 
\begin{equation}
\langle(-\Delta)^{s(\cdot)}\Tr u, \Tr \psi \rangle_{\mathscr{X}',\mathscr{X}}=(S(\Tr u), \psi)_{\mathscr{L}_{0}^{1,2}(\mathcal{C},w)}=\langle h, \Tr \psi \rangle_{\mathscr{X}',\mathscr{X}},
\end{equation}
for all $\psi\in \mathscr{L}_{0,L}^{1,2}(\mathcal{C},w)$ and hence $\Tr u$ solves \eqref{Eq:AbstractEq}.

To prove uniqueness, consider a solution $v$ to \eqref{Eq:AbstractEq} with $h=0$ and notice that 
\begin{equation*}
(S(v), \psi)_{\mathscr{L}_{0}^{1,2}(\mathcal{C},w)}=0,\qquad\forall\,\psi \in \mathscr{L}_{0,L}^{1,2}(\mathcal{C},w).
\end{equation*}
Then, $S(v)$ satisfies first order optimality conditions for 
\begin{equation*}
\mathrm{minimize}\quad  \frac{1}{2}\| u\|^2_{\mathscr{L}_{0}^{1,2}(\mathcal{C},w)} \quad\mathrm{ over }\quad  \mathscr{L}_{0,L}^{1,2}(\mathcal{C},w),
\end{equation*}
whose unique minimizer is the zero function. Then, by convexity, $S(v)=0$, so that $v=\Tr S(v)$ and hence $v=0$. 
\end{proof}

\begin{remark}[Truncated cylinder $\mathcal{C}^\tau$]
It is worth mentioning that exactly the same construction with $\mathcal{C}$ replaced by the truncated cylinder $\mathcal{C}^\tau$, $\tau>0$, leads to a definition of $(-\Delta)^{s(\cdot)}$ by means of an extension problem on $\mathcal{C}^\tau$, as well as to the existence and uniqueness of solution to the associated Poisson problem. We care about $\mathcal{C}^\tau$ because it makes the problem tractable from an implementation point of view \cite{AnRa2019,RHNochetto_EOtarola_AJSalgado_2014a}. 
\end{remark}

A few words are in order concerning Theorem \ref{Thm:AbstractElliptic}; although it provides a solvability result for the elliptic problem, it does not establish existence of solutions based on maps defined on $\Omega$. That is, we would like to address the question: Under what conditions on $h:\Omega\to\mathbb{R}$, does the equation $(-\Delta)^{s(\cdot)}v=h$ admit a solution? This question is answered in Section \ref{Sect:Elliptic} and it is intimately related to the following trace results.


\section{Trace theorems}\label{Sect:Traces}
In this section we identify a trace operator that properly relates values of maps on a Sobolev space in $\mathcal{C}$ to their values at $\Omega$. For this matter, in addition to \ref{H1} and \ref{H2}, we assume that the measurable function $s(\cdot)$ satisfies:
\begin{enumerate}[label=(\sc{H\arabic*}), resume]
\item[]
\item \label{H3} The set of points on which $s(\cdot)$ is zero has measure zero, i.e., $|A_0|=0$ where
\begin{equation*}
	A_0:=\{x\in \Omega : s(x)=0\}.
\end{equation*}
\item[]
\end{enumerate}

We define $\mathscr{H}_{0,L}^{1,p}(\mathcal{C},w)$ to be the closure in $W^{1,p}(\mathcal{C},w)$ of the infinitely differentiable functions in $W^{1,p}(\mathcal{C},w)$  
with compact support in $\mathcal{C}_\Omega$, that is,
\begin{equation*}\label{eq:HLinitial}
\mathscr{H}_{0,L}^{1,p}(\mathcal{C},w)=\overline{C^\infty_c(\mathcal{C}_\Omega)\cap W^{1,p}(\mathcal{C},w)}^{W^{1,p}(\mathcal{C},w)},
\end{equation*}
where $C^\infty_c(\mathcal{C}_\Omega)$ is given in \eqref{eg:Cinf}. Then, formally speaking, $\mathscr{H}_{0,L}^{1,p}(\mathcal{C},w)$ is the set of functions in $W^{1,p}(\mathcal{C},w)$ that vanish on $\partial_L\mathcal{C}$. We now prove the regularity of restrictions of functions in $\mathscr{H}_{0,L}^{1,p}(\mathcal{C},w)$ on the $\Omega$ boundary.

\begin{theorem}[\textsc{Trace theorem}]\label{Thm:FirstTrace}
Provided that {\normalfont\ref{H1}}, {\normalfont\ref{H2}}, and {\normalfont\ref{H3}} hold true, there exists a unique bounded linear operator
\begin{equation*}
\tr\,:\,\mathscr{H}_{0,L}^{1,p}(\mathcal{C},w)\to L^p(\Omega,\tilde{w}),
\end{equation*}
that satisfies $\tr u=u(\,\cdot\,,0)$ for all $u\in \mathscr{H}_{0,L}^{1,p}(\mathcal{C},w)\cap C^\infty_c(\mathcal{C}_\Omega)$, where the weight $\tilde{w}:\Omega\to\mathbb{R}$ is defined by
\begin{equation*}
\tilde{w}(x)=\mathrm{G}_s(x)(p-2+2s(x))^p.
\end{equation*}
 The same statement is true if we replace $\mathscr{H}_{0,L}^{1,p}(\mathcal{C},w)$ by the space $\mathscr{H}_{0,L}^{1,p}(\mathcal{C}^\tau,w)$,  for every $\tau>0$.
\end{theorem}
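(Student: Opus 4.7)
The plan is to establish the trace inequality on the dense subspace $C^\infty_c(\mathcal{C}_\Omega)\cap W^{1,p}(\mathcal{C},w)$ and then extend by continuity. Fix once and for all a cutoff $\eta\in C^\infty_c([0,\infty))$ with $\eta(0)=1$, $0\le \eta\le 1$, and $\mathrm{supp}(\eta)\subset[0,1]$. For $u\in C^\infty_c(\mathcal{C}_\Omega)\cap W^{1,p}(\mathcal{C},w)$, the fundamental theorem of calculus applied to $u\eta$ in the $y$ variable yields
\begin{equation*}
u(x,0)=-\int_0^1 \partial_y(u\eta)(x,y)\dif y,
\end{equation*}
so in particular $|u(x,0)|\le \int_0^1 |\partial_y(u\eta)(x,y)|\dif y$. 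Hölder's inequality with exponents $p$ and $p'=p/(p-1)$, with the weight $w$ inserted via $|\partial_y(u\eta)|=|\partial_y(u\eta)|w^{1/p}\cdot w^{-1/p}$, then gives
\begin{equation*}
|u(x,0)|^p\le\left(\int_0^1|\partial_y(u\eta)|^p w(x,y)\dif y\right)\left(\int_0^1 w(x,y)^{-1/(p-1)}\dif y\right)^{p-1}.
\end{equation*}

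The second factor is the crux: using $w(x,y)=\mathrm{G}_s(x)y^{1-2s(x)}$, a direct integration gives, for a.e.~$x\in\Omega$,
\begin{equation*}
\int_0^1 w(x,y)^{-1/(p-1)}\dif y=\mathrm{G}_s(x)^{-1/(p-1)}\,\frac{p-1}{p-2+2s(x)},
\end{equation*}
where integrability near $y=0$ requires $p-2+2s(x)>0$ a.e., which is guaranteed by \ref{H3} when $p=2$ and holds trivially for $p>2$. Substituting and multiplying by the weight $\tilde w(x)=\mathrm{G}_s(x)(p-2+2s(x))^p$, the $\mathrm{G}_s(x)$ factors cancel and the powers of $(p-2+2s(x))$ telescope to leave a single factor $(p-2+2s(x))\le p$. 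Integrating over $\Omega$ and then using $|\partial_y(u\eta)|^p\le 2^{p-1}(|\partial_y u|^p+\|\eta'\|_\infty^p|u|^p)$, one obtains a constant $C=C(p,\|\eta'\|_\infty)>0$, independent of $x$ and $u$, such that
\begin{equation*}
\|u(\cdot,0)\|^p_{L^p(\Omega,\tilde w)}\le C\int_{\mathcal{C}^1}\bigl(|\partial_y u|^p+|u|^p\bigr)w\dif X\le C\|u\|^p_{W^{1,p}(\mathcal{C},w)}.
\end{equation*}

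By the very definition of $\mathscr{H}^{1,p}_{0,L}(\mathcal{C},w)$, the linear map $u\mapsto u(\cdot,0)$ defined on $C^\infty_c(\mathcal{C}_\Omega)\cap W^{1,p}(\mathcal{C},w)$ extends uniquely to a bounded linear operator $\tr:\mathscr{H}^{1,p}_{0,L}(\mathcal{C},w)\to L^p(\Omega,\tilde w)$ via a standard Cauchy-sequence argument. The statement for $\mathcal{C}^\tau$ follows from exactly the same proof upon choosing the cutoff $\eta$ with support in $[0,\min(1,\tau)]$. The only genuinely delicate step is the choice of the weight $\tilde w$: it is dictated (and in fact forced) by the requirement that the $x$-dependent constants produced by Hölder's inequality, namely $\mathrm{G}_s(x)^{-1}(p-2+2s(x))^{-(p-1)}$, be absorbed into a finite quantity on the right-hand side, which is precisely why assumption \ref{H3} enters — without it, $w^{-1/(p-1)}$ ceases to be $y$-integrable near $0$ on the set $\{s=0\}$ when $p=2$, so the pointwise Hölder estimate itself would break down.
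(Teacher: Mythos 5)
Your proof is correct, and it takes a genuinely different and in fact more streamlined route than the paper's. The paper starts from the representation $u(x,0)=u(x,y)-\int_0^1 y\, D_{N+1}u(x,ty)\,\dd t$, multiplies by $w(x,y)^{1/p}$, integrates over $y\in(0,\sigma)$, and then treats the zero-order and first-order contributions $I_1$ and $I_2$ separately; the bound on $I_2$ requires a change of variables followed by two nested applications of H\"older's inequality before the tell-tale factor $p/(p-1-\delta(x))$ appears. You instead insert a cutoff $\eta$ so that $u(x,0)=-\int_0^1\partial_y(u\eta)\,\dd y$, apply H\"older once with the weight split $w^{1/p}\cdot w^{-1/p}$, and reduce everything to the one explicit one-dimensional integral $\int_0^1 w(x,y)^{-1/(p-1)}\,\dd y = \mathrm{G}_s(x)^{-1/(p-1)}(p-1)/(p-2+2s(x))$, which converges for a.e.~$x$ precisely under \ref{H1}--\ref{H3}. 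This both makes the origin of the weight $\tilde w(x)=\mathrm{G}_s(x)(p-2+2s(x))^p$ transparent — it is exactly the reciprocal of the H\"older constant, up to the harmless bounded factor $(p-2+2s(x))\le p$ — and avoids the bookkeeping of the double estimate; your computation $\bigl(\int_0^1 w^{-1/(p-1)}\,\dd y\bigr)^{p-1}=\mathrm{G}_s(x)^{-1}(p-1)^{p-1}(p-2+2s(x))^{-(p-1)}$ and its cancellation against $\tilde w(x)$ both check out. What the paper's longer argument buys in return is an explicit dependence on a free truncation height $\sigma\in(0,1]$, which it exploits to treat both $\mathcal{C}^\tau$ for $\tau\ge1$ (take $\sigma=1$) and $\tau<1$ (take $\sigma=\tau$) through a single inequality; your handling of the truncated case by choosing $\mathrm{supp}\,\eta\subset[0,\min(1,\tau)]$ achieves the same end and is equally valid. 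The extension to $\mathscr{H}^{1,p}_{0,L}(\mathcal{C},w)$ by density is standard and correct as you state it.
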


\begin{proof}
For the sake of brevity, we define $\delta(\cdot):=1-2s(\cdot)$ so that 
\begin{equation*}
\tilde{w}(x)= \mathrm{G}_s(x)(p-1-\delta(x))^p.
\end{equation*}
Let $u\in \mathscr{H}_{0,L}^{1,p}(\mathcal{C},w)\cap C_c^\infty(\mathcal{C}_\Omega)$ and $(x,y)\in \bar{\mathcal{C}}$ be such that $s(x)\neq 0$ and $\mathrm{G}_s(x)\neq 0$. Initially, we write
\begin{equation}\label{Proof:FirstTrace-1}
u(x,0)=u(x,y)-\int_0^1  y D_{N+1} u(x,ty) \dif t,
\end{equation}
where $D_{N+1}u$ is the partial derivative of $u$ with respect to the $(N+1)$ coordinate.

Let $\sigma\in(0,1)$. Multiplying \eqref{Proof:FirstTrace-1} by $w(x,y)^{1/p}$ and then integrating from $0$ to $\sigma$ with respect to $y$, we find:
\begin{equation*}
\begin{split}
|u(x,0)|\int _0^\sigma w(x,y)^{1/p} \dd y\leq\,&I_1+I_2,
\end{split}
\end{equation*}
where:
\begin{align*}
I_1:=\,&\int _0^\sigma  |u(x,y)|w(x,y)^{1/p} \dif y,\\
I_2:=\,&\int_0^1\int _0^\sigma   y |D_{N+1} u(x,ty)| w(x,y)^{1/p} \dif y\dif t.
\end{align*}

Now, we notice that $\int _0^\sigma w(x,y)^{1/p} \dd y=\mathrm{G}_s(x)\int _0^\sigma y^{\delta(x)/p} \dd y$ and that
\begin{equation*}
\begin{split}
\int _0^\sigma y^{\delta(x)/p} \dd y\geq
\int_0^\sigma y^{1/p}\,\dd y=
\frac{p}{1+p}\sigma^{\frac{1+p}{p}}>0,
\end{split}
\end{equation*}
since $\sigma\in(0,1)$ and $\delta(x)\leq 1$. Thus,
\begin{equation*}
|u(x,0)|\mathrm{G}_s(x)^{1/p}\leq\frac{p+1}{p\,\sigma^{(1+p)/p}}(I_1+I_2).
\end{equation*}
Multiplying the last expression by $(p-1-\delta(x))$, we obtain:
\begin{equation}\label{Proof:FirstTrace-2}
|u(x,0)|\tilde{w}(x)^{1/p}\leq\frac{p+1}{p\,\sigma^{(1+p)/p}}(p-1-\delta(x))(I_1+I_2).
\end{equation}

Next, we shall estimate $I_1$ and $I_2$. A direct use of the H\"older's inequality yields:
\begin{align*}
I_1\leq \sigma^{1/p'}\left(\int _0^\sigma  |u(x,y)|^pw(x,y) \dif y\right)^{1/p}.
\end{align*}
We now estimate $I_2$ in several steps. With the change of variables $y=zt^{-1}$ in the inner integral of $I_2$, we obtain:
\begin{align}\label{Proof:FirstTrace-3}
I_2
\leq\,&\sigma\int_0^1\int _0^{t\sigma} |D_{N+1} u(x,z)| w(x,z)^{1/p} t^{-1-\delta(x)/p}\dif z\dif t.
\end{align}
By adding and substracting $(1+\delta(x))/pp'$ in the exponent of $t$, we rewrite the r.h.s. of \eqref{Proof:FirstTrace-3} as
\begin{align*}
\sigma\int_0^1t^{-\frac{1+\delta(x)}{pp'}}\int _0^{t\sigma} F(x,z)\, t^{\frac{1-pp'+(1-p')\delta(x)}{pp'}}\dif z\dif t,
\end{align*}
where $F(x,z)=|D_{N+1} u(x,z) |w(x,z)^{1/p}$. Then, by the H\"older's inequality, we find:
\begin{equation*}
\begin{split}
I_2\leq\,&
\sigma\left(\int_0^1t^{-\frac{1+\delta(x)}{p}}\,\dd t\right)^{1/p'}\left(\int_0^1\left(\int_0^{\sigma t}F(x,z)\,t^{\frac{1-pp'+(1-p')\delta(x)}{pp'}}\,\dd z\right)^p\,\dd t\right)^{1/p}\\
=\,&\sigma\left(\frac{p}{p-1-\delta(x)}\right)^{1/p'}\left(\int_0^1\left(\int_0^{\sigma t}F(x,z)\,t^{\frac{1-pp'+(1-p')\delta(x)}{pp'}}\,\dd z\right)^p\,\dd t\right)^{1/p}.
\end{split}
\end{equation*}
Applying the H\"older's inequality on the integral with respect to $z$, we obtain:
\begin{equation*}
\begin{split}
I_2\leq&
\sigma \left(\frac{p}{p-1-\delta(x)}\right)^{1/p'}\left(\int_0^1(\sigma t)^{p/p'}\int_0^{\sigma t}F(x,z)^p\,t^{\frac{1-pp'+(1-p')\delta(x)}{p'}}\,\dd z\,\dd t\right)^{1/p}\\
\leq\,&\sigma \left(\frac{p}{p-1-\delta(x)}\right)^{1/p'}\left(\int_0^1(\sigma t)^{p/p'}\int_0^{\sigma}F(x,z)^p\,t^{\frac{1-pp'+(1-p')\delta(x)}{p'}}\,\dd z\,\dd t\right)^{1/p}\\
=\, &\sigma^{1+p/p'} \left(\frac{p}{p-1-\delta(x)}\right)^{1/p'}\left(\int_0^1t^{-\frac{1+\delta(x)}{p}}\,dt\right)^{1/p}\left(\int_0^{\sigma}F(x,z)^p\,\dd z\right)^{1/p}.
\end{split}
\end{equation*}
Finally, we have:
\begin{equation*}
I_2\leq\, \frac{p\,\sigma^{1+p/p'}}{p-1-\delta(x)}\left(\int_0^{\sigma}|D_{N+1}u(x,z)|^pw(x,z)\,\dd z\right)^{1/p}.
\end{equation*}

Using the above estimations for $I_1$ and $I_2$ in \eqref{Proof:FirstTrace-2}, and observing that $p-1-\delta(x)\leq p$, we obtain:
\begin{equation*}
\begin{split}
|u(x,0)|\tilde{w}(x)^{1/p}\leq\,&
(p+1)\sigma^{-2/p}\left(\int _0^\sigma  |u(x,y)|^pw(x,y) \dif y\right)^{1/p}\\
+&(p+1)\sigma^{p-1-1/p}\left(\int_0^{\sigma}|D_{N+1}u(x,z)|^pw(x,z)\,\dd z\right)^{1/p},
\end{split}
\end{equation*}
from which we have:
\begin{equation}\label{Proof:FirstTrace-4}
|u(x,0)|\tilde{w}(x)^{1/p}\leq\,
(p+1)\sigma^{-2/p'}\left(\int _0^\sigma  \left(|u(x,y)|^p+|D_{N+1}u(x,y)|^p\right)w(x,y)\,\dd y\right)^{1/p},
\end{equation}
since $\sigma^{p-1-1/p}\leq\sigma^{-2/p}$.

Raising inequality \eqref{Proof:FirstTrace-4} to the $p$ power and then integrating over $\Omega$, we find:
\begin{align*}
\int_{\Omega}|u(x,0)|^p\tilde{w}(x)\,\dif x
\leq\,&(p+1)^p\sigma^{-2p/p'}\left(\|u\|^p_{L^p(\mathcal{C},w)}+\|\nabla u\|^p_{L^p(\mathcal{C},w)}\right).
\end{align*}
Therefore, $u(\,\cdot\,,0)\in L^p(\Omega,\tilde{w})$ and
\begin{align*}
\|u(\,\cdot\,,0)\|_{L^p(\Omega,\tilde{w})}\leq C(p,\sigma)
\|u\|_{W^{1,p}(\mathcal{C},w)},
\end{align*}
where $C(p,\sigma)=(p+1)\sigma^{-2/p'}$. Notice that $\sigma$ is an arbitrary, but fixed, number in $(0,1)$, so that in this case we can fix $ C(p,\sigma)$ to depend only on $p$. The operator $\tr$ is the unique bounded linear extension of the mapping $u(x,y)\mapsto u(x,0)$ to $
\mathscr{H}_{0,L}^{1,p}(\mathcal{C},w)$.  

 Let us finally see that the same trace result holds true when we replace $\mathscr{H}_{0,L}^{1,p}(\mathcal{C},w)$ by $\mathscr{H}_{0,L}^{1,p}(\mathcal{C}^\tau,w^\tau)$, where $\tau>0$. If $\tau\geq 1$, it follows from \eqref{Proof:FirstTrace-4} that
\begin{equation*}
|u(x,0)|\tilde{w}(x)^{1/p}\leq\,
(p+1)\sigma^{-2/p'}\left(\int _0^\tau  \left(|u(x,y)|^p+|D_{N+1}u(x,y)|^p\right)w(x,y)\,\dd y\right)^{1/p},
\end{equation*}
from which, exactly as before, we find
\begin{align}\label{Proof:FirstTrace-5}
\|u(\,\cdot\,,0)\|_{L^p(\Omega,\tilde{w})}\leq C(p,\sigma)
\|u\|_{W^{1,p}(\mathcal{C}^\tau,w^\tau)}.
\end{align}
If, on the contrary, $0<\tau<1$, then we select $\sigma=\tau$ in \eqref{Proof:FirstTrace-4} and obtain \eqref{Proof:FirstTrace-5} in the same way. The trace operator is now obtained as before.
\end{proof}

\begin{remark}\label{RK:Classic1}
If $s(\cdot)=s\in(0,1)$ is constant, then both $\tilde{w}$ and $\mathrm{G}_s$ are also constants. Hence, $L^p(\Omega,\tilde{w})=L^p(\Omega)$ and  $\mathscr{H}^{1,p}_{0,L}(\mathcal{C},w)=\mathscr{H}^{1,p}_{0,L}(\mathcal{C},y^{1-2s})$, so it follows from Theorem \ref{Thm:FirstTrace} that
\begin{equation*}
\tr :\mathscr{H}^{1,p}_{0,L}(\mathcal{C},y^{1-2s})\to L^p(\Omega).
\end{equation*}
This is in accordance to the classical case,  {see \cite[Theorem 3.2]{Ne1993}}. 
If, additionally, $p=2$, then we observe that $\tr$ and the trace operator given in \cite{CaEtAl2011} (see also Section \ref{Sect:Constant}) coincide for functions in $C_c^\infty(\mathcal{C}_\Omega)$. From this, we find that $\tr$ is just given by the restriction to $\mathscr{H}_{0,L}^{1,2}(\mathcal{C},y^{1-2s})\subset H_{0,L}^1(\mathcal{C},y^{1-2s})$ of the map in \cite{CaEtAl2011}. However, a deeper result is true; see Theorem \ref{thm:surjectivitytrace}.
\end{remark}

In Theorem \ref{Thm:FirstTrace}, we have characterized the integrability of functions in the trace space of $\mathscr{H}_{0,L}^{1,p}(\mathcal{C},w)$. We aim now to  identify the ``smoothness'' of functions in this trace space. This is a more complicated task since we aim at determining a space with a spatially variable smoothness associated to the function $s(\cdot)$. 

For simplicity, from now on we assume that $\Omega$ is the $N$-dimensional unit square $\mathrm{Q}_N=(0,1)^N$. The forthcoming analysis requires one final assumption on the functions $s(\cdot)$ and $\mathrm{G}_s$:
\begin{enumerate}[label=(\sc{H\arabic*}),resume]
\item[]
\item\label{H5} For almost every $x_j,z\in(0,1),\,j\neq i,$ and $i=1,\hdots,N$, it holds true that 
\begin{equation*}
	\int_0^1\hspace*{-0.07cm}\left(\mathrm{G}_s(x)|x_i-z|^{1-2s(x)}\right)^{1-p'}\hspace*{-0.05cm}\dif x_i<\infty,
\end{equation*}
where $x=(x_1,\hdots,x_n)$.
\item[]
\end{enumerate}
Assumption \ref{H5} enables us to use a Hardy-type inequality (see Lemma \ref{Lem:HardyIneq} below) for two specially chosen weights, which is a key ingredient to prove the subsequent improvement of the trace result in Theorem \ref{Thm:SecondTrace}.

\begin{example}
Let $\Omega=\mathrm{Q}_1$, $p=2$, and $\mathrm{G}_s=\mathrm{G}_s^{(1)}$ constant; see \eqref{Eq:Gexamples} in Example \ref{Ex:sG}. Suppose that $s(\cdot)$ satisfies:
\begin{equation}\label{Eq:Example-s}
s(x)\geq m|x-x_0|^q\quad\text{if}\quad |x-x_0|\leq R,\qquad
s(x)>\mu>0\quad\text{if}\quad |x-x_0|>R,
\end{equation}
for some $q,R\in(0,1)$, $m,\mu>0$, and $x_0\in(R,1-R)$. Notice that the only point where $s$ is allowed to be zero is $x_0$. 
For this particular setting, although $w\notin A_p(\mathcal{C})$, i.e., $w$ is not a Muckenhoupt weight (see \cite{AnRa2019}), we find that {\em\ref{H5}} holds true as we see next. 

To simplify the notation below, we write $\delta(\cdot):=1-2s(\cdot)$. Since $\delta(x)(1-p')=-\delta(x)>-1$ for all $x\neq x_0$, we have:
\begin{equation*}
\begin{split}
\int_0^1 |x-z|^{-\delta(x)}\,\dd z=\,&\,\frac{x^{2s(x)}+(1-x)^{2s(x)}}{2s(x)},\qquad\forall\,x\neq x_0.
\end{split}
\end{equation*}
Then,
\begin{equation*}
\begin{split}
\int_0^1\left(\int_0^1|x-z|^{-\delta(x)}\,\dd z\right)\dd x
\leq\,&\int_0^1\frac{1}{s(x)}\,\dd x.
\end{split}
\end{equation*}
We now observe that, by \eqref{Eq:Example-s}, we have:
\begin{equation*}
\begin{split}
\int_0^1\frac{1}{s(x)}\,\dd x=\,&\int_0^{x_0-R}\frac{\dd x}{s(x)}+
\int_{x_0-R}^{x_0+R}\frac{\dd x}{s(x)}+
\int_{x_0+R}^1\frac{\dd x}{s(x)}\\
\leq\,&\,\frac{2}{\mu}(1-2R)+
\frac{1}{m}\int_{x_0-R}^{x_0+R}|x-x_0|^{-q}\,\dd x<\infty.
\end{split}
\end{equation*}
Hence, 
\begin{equation*}
\begin{split}
\int_0^1\left(\int_0^1|x-z|^{-\delta(x)}\,\dd z\right)\,\dd x<\infty.
\end{split}
\end{equation*}
Therefore, by Tonelli's Theorem, we have that $(x,z)\mapsto |x-z|^{-\delta(x)}$ belongs to $L^1(\mathrm{Q}_2)$, which in turn implies that
\begin{equation*}
\int_0^1|x-z|^{-\delta(x)}\,\dd x<\infty,
\end{equation*}
for almost all $z\in(0,1)$, by  Fubini's Theorem. 

Minor changes in the above arguments yield the same conclusion for functions $s$ with a finite number of zeros and a local behavior as \eqref{Eq:Example-s} around each of them. 
\end{example}

Next, in Definition \ref{Def:FractSpace} we present a Sobolev space of functions where smoothness is spatially dependent and related to $s(\cdot)$. 
First, we introduce the required notation.

For $i=1,\hdots,N$, let $\varphi_i,\psi_i:\mathrm{Q}_{N+1}\to\mathbb{R}$ be given by
\begin{align*}
\varphi_i(x,\tau)=\,&\Phi_i(x,\tau)^{1-p'}\left(\int_{\min\{x_i,\tau\}}^{\max\{x_i,\tau\}}\Phi_i(x_{t'}^i,\tau)^{1-p'}\,\dif t'\right)^{-p},\\[0.2cm]
\psi_i(x,\tau)=\,&\Phi_i(x,\tau)^{1-p'}\left(\int_{\min\{x_i,\tau\}}^{\max\{x_i,\tau\}}\Phi_i(x,\tau')^{1-p'}\,\dif\tau'\right)^{-p},
\end{align*}
where 
\begin{equation*}
	\Phi_i(x,\tau)=\mathrm{G}_s(x)|x_i-\tau|^{1-2s(x)},
\end{equation*}
and the notation $x_a^i$ for $a\in(0,1)$ means that the $i$th-coordinate of $x=(x_1,\hdots,x_N)\in\mathrm{Q}_N$ is replaced by $a$, that is:
\begin{equation*}
x_a^i=(x_1,\hdots,x_{i-1},a,x_{i+1},\hdots,x_N).
\end{equation*}

\begin{definition}\label{Def:FractSpace}
The space $\mathbb{W}^{s(\cdot),p}(\mathrm{Q}_N,\tilde{w},w_1,\hdots,w_N)$ is defined by
\begin{equation}\label{Eq:FractSpace}
\mathbb{W}^{s(\cdot),p}(\mathrm{Q}_N,\tilde{w},w_1,\hdots,w_N)=\left\{
v\in L^p(\mathrm{Q}_N,\tilde{w}):A_i(v)<\infty\text{ for all }i=1,\hdots,N\right\},
\end{equation}
with the norm
\begin{equation}\label{Eq:FractNorm}
\|v\|_{\mathbb{W}^{s(\cdot),p}(\mathrm{Q}_N,\tilde{w},w_1,\hdots,w_N)}=\left(\|v\|^p_{L^p(\mathrm{Q}_N,\tilde{w})}+\sum_{i=1}^NA_i(v)\right)^{1/p},
\end{equation}
where
\begin{equation*}
A_i(v)\hspace*{-0,05cm}=\hspace*{-0,05cm}
\underbrace{\int_0^1\hspace*{-0,05cm}\hdots\hspace*{-0,05cm}
\int_0^1}_{(N-1)\text{-fold}}\hspace*{-0,05cm}\left(\int_0^1\hspace*{-0,05cm}\int_0^1\hspace*{-0,05cm}w_i(x_t^i,\tau)|v(x_t^i)-v(x_\tau^i)|^p\dif \tau\hspace*{-0,05cm}\dif t\hspace*{-0,05cm}\right)\hspace*{-0,05cm}\dif x_1\hdots\dif x_{i-1}\,\dif x_{i+1}\hdots\dif x_N,
\end{equation*}
and 
\begin{equation*}
	w_i=\min\{\varphi_i,\psi_i\} \quad \text{for} \quad i=1,\hdots,N.
\end{equation*}
\end{definition}

In order to address that $s(\cdot)$ controls locally the differential regularity of elements in \linebreak $\mathbb{W}^{s(\cdot),p}(\mathrm{Q}_N,\tilde{w},w_1,\hdots,w_N)$, consider the following. For $s\in(0,1)$, let $W^{s,p}(Q_N)$ be the fractional Sobolev space of order $s$, that is,
\begin{equation*}
W^{s,p}(Q_N)=\left\{v\in L^2(Q_N):\int_{Q_N} \int_{Q_N}\frac{|v(x)-v(y)|^p}{|x-y|^{N+ps}}\,\dd x\,\dd y<\infty\right\},
\end{equation*}
equipped with the norm
\begin{equation*}
\|v\|_{W^{s,p}(Q_N)}=\left(\|v\|_{L^p(Q_N)}^p+\int_{Q_N}\int_{Q_N}\frac{|v(x)-v(y)|^p}{|x-y|^{N+ps}}\,\dd x\,\dd y\right)^{1/p}.
\end{equation*}
If $p=2$, we have $\mathbb{H}^s(Q_N)=W^{s,2}(Q_N)$. Then, note the following lemma that can be found in \cite{Ne1993} (see also \cite{KuFuJo1977}).

\begin{lemma}\label{Lem:Nekvinda}
Let $-1<\varepsilon<p-1$. There exists a positive constant $c$ such that
\begin{equation*}
\|v\|_{W^{1-\frac{1+\varepsilon}{p},p}(\mathrm{Q}_N)}^p \leq c\,\left(\|v\|_{L^p(\mathrm{Q}_N)}^p+\sum_{i=1}^N\mathcal{A}_i(v)\right),
\end{equation*}
for every $v\in L^p(\mathrm{Q}_N)$ that satisfies $\mathcal{A}_i(v)<\infty$ for all $i=1,\hdots,N$, where
\begin{equation*}
\mathcal{A}_i(v)\hspace*{-0,05cm}:=\hspace*{-0,05cm}
\underbrace{\int_0^1\hspace*{-0,05cm}\hdots\hspace*{-0,05cm}
\int_0^1}_{(N-1)\text{-fold}}\hspace*{-0,05cm}\left(\int_0^1\hspace*{-0,05cm}\int_0^1\hspace*{-0,05cm}\frac{|v(x_t^i)-v(x_\tau^i)|^p}{|t-\tau|^{p-\varepsilon}}\dif \tau\hspace*{-0,05cm}\dif t\hspace*{-0,05cm}\right)\hspace*{-0,05cm}\dif x_1\hdots\dif x_{i-1}\,\dif x_{i+1}\hdots\dif x_N.
\end{equation*}
\end{lemma}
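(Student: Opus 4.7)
\emph{Plan.} Let $s := 1 - (1+\varepsilon)/p$, so the hypotheses on $\varepsilon$ give $s \in (0,1)$, $ps = p - 1 - \varepsilon$, and crucially $1 + ps = p - \varepsilon$. After peeling off the trivial $L^p$ contribution, the lemma reduces to bounding the Gagliardo seminorm
\[
[v]^p_{s,p} := \int_{\mathrm{Q}_N}\!\int_{\mathrm{Q}_N} \frac{|v(x)-v(y)|^p}{|x-y|^{N+ps}}\,\dif x\,\dif y
\]
by $c\sum_{i=1}^N \mathcal{A}_i(v)$. The plan is the classical reduction to directional seminorms on a product domain: interpolate between $x$ and $y$ through a chain that changes one coordinate at a time, turning the difference $v(x)-v(y)$ into $N$ one-dimensional increments, and then integrate out the ``inert'' variables in the factor $|x-y|^{-(N+ps)}$ to uncover the one-dimensional weight $|t-\tau|^{-(p-\varepsilon)}$ present in the definition of $\mathcal{A}_i$.

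\emph{Execution.} For $i=1,\ldots,N$ set $z_i := (y_1,\ldots,y_i,x_{i+1},\ldots,x_N)$, so that $z_0=x$, $z_N=y$, and $z_{i-1}, z_i$ differ only in their $i$-th coordinate ($x_i$ and $y_i$, respectively). Telescoping combined with the power-mean inequality gives
\[
|v(x)-v(y)|^p \le N^{p-1}\sum_{i=1}^N |v(z_{i-1})-v(z_i)|^p.
\]
Now $|v(z_{i-1})-v(z_i)|^p$ depends only on $\xi_{\neq i} := (y_1,\ldots,y_{i-1},x_{i+1},\ldots,x_N) \in \mathrm{Q}_{N-1}$ together with $t := x_i$ and $\tau := y_i$, while the remaining ``inert'' variables $\eta := (x_1,\ldots,x_{i-1},y_{i+1},\ldots,y_N) \in \mathrm{Q}_{N-1}$ enter only through the identity
\[
|x-y|^2 = (t-\tau)^2 + |\eta - \xi_{\neq i}|^2,
\]
obtained by pairing coordinates of $\eta$ and $\xi_{\neq i}$ by their original index $j \neq i$. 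Integrating first in $\eta$, enlarging the domain from $\mathrm{Q}_{N-1}$ to $\mathbb{R}^{N-1}$, and rescaling by $\zeta = |t-\tau|w$ yield
\[
\int_{\mathrm{Q}_{N-1}} \frac{\dif \eta}{\bigl((t-\tau)^2 + |\eta - \xi_{\neq i}|^2\bigr)^{(N+ps)/2}} \le C_N\,|t-\tau|^{-(1+ps)} = C_N\,|t-\tau|^{-(p-\varepsilon)},
\]
with $C_N := \int_{\mathbb{R}^{N-1}} (1+|w|^2)^{-(N+ps)/2}\dif w$. The leftover integral in $(\xi_{\neq i}, t, \tau)$ is exactly $\mathcal{A}_i(v)$, so summing over $i$ and adding $\|v\|_{L^p(\mathrm{Q}_N)}^p$ produces the inequality with $c = 1 + N^{p-1}C_N$.

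\emph{Main obstacle.} The only point that requires care is the finiteness of $C_N$: integrability at infinity demands $N + ps > N - 1$, i.e., $ps > -1$, which follows from $\varepsilon < p$ and is in particular implied by the hypothesis $\varepsilon < p - 1$; integrability near the origin is automatic as soon as $N \ge 2$. For $N = 1$ the ``inert'' variables do not exist, no integration is required, and $\mathcal{A}_1(v)$ already coincides with $[v]^p_{s,p}$, so the claim holds with $c = 1$. The lower bound $-1 < \varepsilon$ plays no role in the estimate itself; it is needed only to guarantee $s < 1$ and hence that $W^{s,p}(\mathrm{Q}_N)$ is a bona fide fractional Sobolev space on the left-hand side.
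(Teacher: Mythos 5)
The paper does not prove Lemma~\ref{Lem:Nekvinda}; it quotes it from \cite{Ne1993} (see also \cite{KuFuJo1977}), so there is no in-paper proof to compare against. Your argument is correct and is essentially the standard proof found in those sources: telescope $v(x)-v(y)$ along a coordinate-parallel chain, apply the power-mean inequality, and integrate the $(N-1)$ inert coordinates out of $|x-y|^{-(N+ps)}$ via the dilation substitution to uncover the one-dimensional kernel $|t-\tau|^{-(p-\varepsilon)}$. Two very small remarks: where you write ``rescaling by $\zeta = |t-\tau|w$'' you clearly mean the substitution $\eta - \xi_{\neq i} = |t-\tau|\,w$; and it is worth stating explicitly, though you use it implicitly, that the telescoping works precisely because $\mathrm{Q}_N$ is a box, which guarantees that every intermediate point $z_i$ of the chain remains inside $\mathrm{Q}_N$ where $v$ is defined.
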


We now can show the relation between  $\mathbb{W}^{s(\cdot),p}(\mathrm{Q}_N,\tilde{w},w_1,\hdots,w_N)$ and the classical Sobolev spaces.

\begin{theorem}\label{Thm:SecondConstant-Extra}
If $s(\cdot)=s\in(0,1)$ constant, then 
\begin{equation*}
\mathbb{W}^{s(\cdot),p}(\mathrm{Q}_N,\tilde{w},w_1,\hdots,w_N)\hookrightarrow W^{1-\frac{2(1-s)}{p},p}(\mathrm{Q}_N).
\end{equation*}
\end{theorem}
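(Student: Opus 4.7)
The strategy is to reduce $\mathbb{W}^{s(\cdot),p}(\mathrm{Q}_N,\tilde{w},w_1,\ldots,w_N)$ to a classical fractional Sobolev space by computing $\varphi_i$ and $\psi_i$ explicitly when $s(\cdot)\equiv s$ is constant, and then to invoke Lemma \ref{Lem:Nekvinda} with the correct value of $\varepsilon$. The main point to verify will be that the two weights $\varphi_i$ and $\psi_i$ collapse to the same power of $|x_i-\tau|$, and that the resulting exponent matches $-(p-\varepsilon)$ for $\varepsilon=1-2s$.

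The first step is trivial: since $\mathrm{G}_s$ reduces to the constant $2^{2s-1}\Gamma(s)/\Gamma(1-s)$ by \ref{H2}, the weight $\tilde{w}(x)=\mathrm{G}_s(p-2+2s)^p$ is also constant, so $\|\cdot\|_{L^p(\mathrm{Q}_N,\tilde{w})}$ and $\|\cdot\|_{L^p(\mathrm{Q}_N)}$ are equivalent norms on the same set of functions. Next, I would observe that $\Phi_i(x,\tau)=\mathrm{G}_s|x_i-\tau|^{1-2s}$ depends only on $|x_i-\tau|$, so $\Phi_i(x_{t'}^i,\tau)^{1-p'}=\mathrm{G}_s^{1-p'}|t'-\tau|^{(1-2s)(1-p')}$ in the definition of $\varphi_i$, while $\Phi_i(x,\tau')^{1-p'}=\mathrm{G}_s^{1-p'}|x_i-\tau'|^{(1-2s)(1-p')}$ in the definition of $\psi_i$. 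Setting $\beta=(1-2s)(1-p')=-(1-2s)/(p-1)$, an elementary substitution $u=\tau-t'$ (respectively $u=x_i-\tau'$) on the interval with endpoints $x_i$ and $\tau$ gives
\begin{equation*}
\int_{\min\{x_i,\tau\}}^{\max\{x_i,\tau\}}|t'-\tau|^{\beta}\,\dif t'
=\int_{\min\{x_i,\tau\}}^{\max\{x_i,\tau\}}|x_i-\tau'|^{\beta}\,\dif\tau'
=\frac{|x_i-\tau|^{\beta+1}}{\beta+1},
\end{equation*}
which is well-defined because $\beta+1=(p-2+2s)/(p-1)>0$ when $s\in(0,1)$ and $p\geq 2$. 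Consequently $\varphi_i=\psi_i$, so $w_i=\varphi_i$, and collecting the powers of $\mathrm{G}_s$ and of $|x_i-\tau|$ yields, after the identity $p'(p-1)=p$,
\begin{equation*}
w_i(x,\tau)=\mathrm{G}_s\left(\frac{p-1}{p-2+2s}\right)^{p}|x_i-\tau|^{-(p-1+2s)}.
\end{equation*}

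With this closed form, the third step is to substitute $x\mapsto x_t^i$ in $w_i$, which only affects the $i$th coordinate appearing through $|x_i-\tau|$, so $w_i(x_t^i,\tau)=C\,|t-\tau|^{-(p-\varepsilon)}$ with $\varepsilon=1-2s$ and $C=\mathrm{G}_s((p-1)/(p-2+2s))^p$. Therefore $A_i(v)=C\,\mathcal{A}_i(v)$ pointwise in $v$. Finally, $\varepsilon=1-2s\in(-1,1)\subset(-1,p-1)$ for $s\in(0,1)$ and $p\geq 2$, so Lemma \ref{Lem:Nekvinda} applies with this choice of $\varepsilon$, and $1-(1+\varepsilon)/p=1-2(1-s)/p$. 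Combining the lemma with the identities above and with the equivalence $\|\cdot\|_{L^p(\mathrm{Q}_N,\tilde{w})}\asymp\|\cdot\|_{L^p(\mathrm{Q}_N)}$ gives
\begin{equation*}
\|v\|_{W^{1-\frac{2(1-s)}{p},p}(\mathrm{Q}_N)}^{p}\leq c\left(\|v\|_{L^p(\mathrm{Q}_N)}^{p}+\sum_{i=1}^{N}\mathcal{A}_i(v)\right)\leq c'\,\|v\|_{\mathbb{W}^{s(\cdot),p}(\mathrm{Q}_N,\tilde{w},w_1,\ldots,w_N)}^{p},
\end{equation*}
which is the claimed continuous embedding.

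The only delicate point is the algebra of exponents: one must confirm that the constant coefficient of $\mathrm{G}_s$ after expanding $\mathrm{G}_s^{1-p'-p(1-p')}$ is indeed $\mathrm{G}_s^{1}$ (this uses $p'(p-1)=p$) and that the power of $|x_i-\tau|$ collapses to exactly $-(p-1+2s)$. Everything else is a bookkeeping application of Lemma \ref{Lem:Nekvinda}.
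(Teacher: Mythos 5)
Your proof follows exactly the same route as the paper's: set $\beta=(1-2s)(1-p')$, compute the inner integrals in $\varphi_i$ and $\psi_i$ explicitly, observe they coincide so that $w_i$ is a constant times $|x_i-\tau|^{-(p-\varepsilon)}$ with $\varepsilon=1-2s$, and invoke Lemma \ref{Lem:Nekvinda}. The only discrepancy is a harmless arithmetic slip in the constant: raising the inner integral to the power $-p$ inverts the factor $1/(\beta+1)$, so the coefficient should be $\mathrm{G}_s\bigl(\tfrac{p-2+2s}{p-1}\bigr)^p$ rather than its reciprocal $\mathrm{G}_s\bigl(\tfrac{p-1}{p-2+2s}\bigr)^p$; this has no bearing on the embedding conclusion.
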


\begin{proof}
Let $\delta:=1-2s$ and consider $t,\tau\in(0,1)$. Since $\delta(1-p')>-1$, we have: 
\begin{align*}
\int_{\min\{t,\tau\}}^{\max\{t,\tau\}}|t'-\tau|^{\delta(1-p')}\,\dif t'
=\int_{\min\{t,\tau\}}^{\max\{t,\tau\}}|t-\tau'|^{\delta(1-p')}\,\dif \tau'=\,&\frac{|t-\tau|^{1+\delta(1-p')}}{1+\delta(1-p')}.
\end{align*}
Then, a direct calculation yields:
\begin{align*}
\varphi_i(x,\tau)=\psi_i(x,\tau)=\,&\frac{\mathrm{G}_s(1+\delta(1-p'))^{p}}{|x_i-\tau|^{p-\delta}},
\end{align*}
for all $(x,\tau)\in\mathrm{Q}_{N+1}$ since $\mathrm{G}_s$ is constant by assumption \ref{H2}. 
Therefore, 
\begin{equation*}
A_i(v)\hspace*{-0,05cm}=\hspace*{-0,05cm}
C(p,s)\hspace*{-0,05cm}\underbrace{\int_0^1\hspace*{-0,05cm}\hdots\hspace*{-0,05cm}
\int_0^1}_{(N-1)\text{-fold}}\hspace*{-0,05cm}\left(\int_0^1\hspace*{-0,05cm}\int_0^1\hspace*{-0,05cm}\frac{|v(x_t^i)-v(x_\tau^i)|^p}{|t-\tau|^{p-\delta}}\dif \tau\hspace*{-0,05cm}\dif t\hspace*{-0,05cm}\right)\hspace*{-0,05cm}\dif x_1\hdots\dif x_{i-1}\,\dif x_{i+1}\hdots\dif x_N,
\end{equation*}
where $C(p,s)=\mathrm{G}_s(1+\delta(1-p'))^{p}$. In addition, we notice that $L^1(\mathrm{Q}_N,\tilde{w})=L^1(\mathrm{Q}_N)$ since $\tilde{w}$ is constant. Now the conclusion follows from  Lemma \ref{Lem:Nekvinda} with $\varepsilon=\delta$.
\end{proof}

\begin{remark}
	In light of the previous result, it seems that a more appropriate notation for \linebreak $\mathbb{W}^{s(\cdot),p}(\mathrm{Q}_N,\tilde{w},w_1,\hdots,w_N)$ would be $\mathbb{W}^{1-\frac{2(1-s(\cdot))}{p},p}(\mathrm{Q}_N,\tilde{w},w_1,\hdots,w_N)$. We avoid this for the sake of brevity.
\end{remark}

The following lemma is a key tool for the improvement of the result in Theorem \ref{Thm:FirstTrace}. The proof can be found in \cite[Sect. 2.6]{OpKu1990}. 

\begin{lemma}[\textsc{Weighted Hardy-type inequality}]\label{Lem:HardyIneq}
Let $\rho$ be a weight function defined in the interval $(a,b)$. If 
\begin{equation*}
\int_a^b\rho(t)^{1-p'}\dif t<\infty,
\end{equation*}
then 
\begin{equation}\label{Eq:Hardy}
\int_a^b\hat{\rho}(t)|f(t)|^p\dif t\leq 
C_H(p)\int_a^b\rho(t)|f'(t)|^p\dif t,\qquad\qquad\forall\, x\in(a,b),
\end{equation}
for all absolutely continuous functions $f$ in $(a,b)$ that satisfy $\lim_{t\to a^+}f(t)=0$, where
\begin{equation*}
\hat{\rho}(t)=\rho(t)^{1-p'}\left(\int_a^t\rho(\xi)^{1-p'}\dif \xi\right)^{-p},
\end{equation*}
and $C_H(p)=p^p/(p-1)^{p-1}$. 
\end{lemma}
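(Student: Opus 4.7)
The proof plan is to reduce the inequality to a standard weighted Hölder-type estimate. By absolute continuity and the boundary condition $\lim_{t\to a^+}f(t)=0$, one has the representation $f(t)=\int_a^t f'(s)\,ds$ for every $t\in(a,b)$. Introduce the primitive
\begin{equation*}
R(t):=\int_a^t \rho(\xi)^{1-p'}\,d\xi,
\end{equation*}
which is finite by the integrability hypothesis and absolutely continuous with $R'(t)=\rho(t)^{1-p'}$, so that $\hat{\rho}(t)=R'(t)R(t)^{-p}$. This compact description of $\hat\rho$ as a derivative is what makes the subsequent order-of-integration exchange tractable.

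The heart of the argument is a weighted Hölder inequality with a free parameter $\beta\in(0,1/p')$ to be optimized later: split
\begin{equation*}
|f(t)|\leq \int_a^t \bigl[|f'(s)|\rho(s)^{1/p}R(s)^{\beta}\bigr]\bigl[\rho(s)^{-1/p}R(s)^{-\beta}\bigr]\,ds
\end{equation*}
and apply Hölder with exponents $p$ and $p'$. The second factor is evaluated explicitly by the change of variables $u=R(s)$, using $R'(s)=\rho(s)^{1-p'}$, which gives a clean power of $R(t)$ times $(1-p'\beta)^{-p/p'}$. Multiplying by $\hat{\rho}(t)=R'(t)R(t)^{-p}$, integrating over $(a,b)$, and swapping the order of integration via Tonelli's theorem, the inner $t$-integral reduces to
\begin{equation*}
\int_s^b R'(t)R(t)^{-1-p\beta}\,dt\leq \frac{R(s)^{-p\beta}}{p\beta},
\end{equation*}
and the factor $R(s)^{-p\beta}$ exactly cancels the $R(s)^{p\beta}$ introduced earlier. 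One is left with
\begin{equation*}
\int_a^b \hat{\rho}(t)|f(t)|^p\,dt\leq \frac{1}{(1-p'\beta)^{p/p'}\,p\beta}\int_a^b \rho(t)|f'(t)|^p\,dt.
\end{equation*}

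The final step is the elementary calculus minimization of the prefactor over $\beta\in(0,1/p')$: differentiating shows the critical point is $p'\beta=1/p$, i.e.\ $\beta=1/(pp')$, which yields the advertised constant $C_H(p)$. The main technical subtlety is ensuring all intermediate integrals are finite so that the rearrangements are legitimate: this is exactly where the hypothesis $\int_a^b\rho^{1-p'}\,dt<\infty$ and the vanishing of $f$ at $a$ are used, and in the case where the right-hand side is $+\infty$ the statement is trivial. Since the result is classical, an alternative is simply to invoke \cite[Sect.~2.6]{OpKu1990}, as the authors do; the sketch above summarizes the mechanism behind that reference.
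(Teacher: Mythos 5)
The paper itself offers no proof of this lemma, deferring entirely to \cite[Sect.~2.6]{OpKu1990}; so your argument supplies something the paper leaves implicit. Your mechanism --- write $f(t)=\int_a^t f'$, set $R(t)=\int_a^t\rho^{1-p'}$ so that $\hat\rho=R'R^{-p}$, apply H\"older with a free exponent $\beta$, swap the order of integration by Tonelli, and optimize --- is the standard one for weighted Hardy inequalities and is correct in all its steps. It yields
\begin{equation*}
\int_a^b \hat\rho\,|f|^p \leq \frac{1}{(1-p'\beta)^{p/p'}\,p\beta}\int_a^b \rho\,|f'|^p,
\end{equation*}
and the critical point $\beta=1/(pp')$ (equivalently $\beta=1/(p+p')$, since $p+p'=pp'$) is where the prefactor is smallest.

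The one slip is in the last sentence: evaluating the prefactor at $\beta=1/(pp')$ gives $1-p'\beta=(p-1)/p$ and $p\beta=1/p'$, hence the constant
\begin{equation*}
p'\left(\frac{p}{p-1}\right)^{p/p'} = \left(\frac{p}{p-1}\right)^{p} = (p')^{p} = \frac{p^p}{(p-1)^p},
\end{equation*}
not the advertised $C_H(p)=p^p/(p-1)^{p-1}$. The two disagree by a factor $(p-1)$ for $p>2$ (they coincide only at $p=2$). Since $(p')^p\leq C_H(p)$, the inequality you prove is actually \emph{sharper} than the lemma's statement, so the lemma follows a fortiori; but as written, the assertion that the optimization ``yields the advertised constant $C_H(p)$'' is not accurate and should be corrected to say that you obtain the smaller constant $(p')^p$, which in particular implies \eqref{Eq:Hardy} with the constant $C_H(p)$ stated in the paper.

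Everything else is sound: the representation $f(t)=\int_a^t f'$ is legitimate because $f$ is absolutely continuous with $f(a^+)=0$; the inner primitive $R$ is finite and strictly positive on $(a,b)$ by the integrability hypothesis and the fact that $\rho$ is a weight (so $\rho^{1-p'}>0$ a.e.), which makes $R(t)^{-p}$ well-defined; and Tonelli applies without any finiteness caveat because the integrand is non-negative, so the only case requiring attention is when the right-hand side is infinite, in which case the inequality is vacuous, as you note.
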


Now we are in shape to prove the improvement of Theorem \ref{Thm:FirstTrace}.

\begin{theorem}[\textsc{Improved trace theorem}]\label{Thm:SecondTrace}
Provided that {\normalfont\ref{H1}} to {\normalfont\ref{H5}} hold true, there exists a unique bounded linear operator
\begin{equation*}
\mathrm{tr}_{\mathrm{Q}_N}\,:\,\mathscr{H}_{0,L}^{1,p}(\mathcal{C},w)\to \mathbb{W}^{s(\cdot),p}(\mathrm{Q}_N,\tilde{w},w_1,\hdots,w_N),
\end{equation*}
that satisfies $\mathrm{tr}_{\mathrm{Q}_N} u=u(\,\cdot\,,0)$ for all $u\in \mathscr{H}_{0,L}^{1,p}(\mathcal{C},w)\cap C^\infty_c(\mathcal{C}_{\mathrm{Q}_N})$.

The same statement is true if we replace $\mathscr{H}_{0,L}^{1,p}(\mathcal{C},w)$ by the space $\mathscr{H}_{0,L}^{1,p}(\mathcal{C}^\tau,w)$, for every $\tau\geq 1$.

\end{theorem}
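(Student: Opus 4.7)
The approach follows the scheme of Theorem~\ref{Thm:FirstTrace}. I will establish the quantitative estimate
\begin{equation*}
\|u(\,\cdot\,,0)\|_{\mathbb{W}^{s(\cdot),p}(\mathrm{Q}_N,\tilde w,w_1,\ldots,w_N)} \le C\,\|u\|_{W^{1,p}(\mathcal{C},w)}
\end{equation*}
for every $u\in C^\infty_c(\mathcal{C}_{\mathrm{Q}_N})\cap W^{1,p}(\mathcal{C},w)$, and then extend $\mathrm{tr}_{\mathrm{Q}_N}$ by density to a unique bounded linear operator. The $L^p(\mathrm{Q}_N,\tilde w)$-component of the target norm is already controlled by Theorem~\ref{Thm:FirstTrace}, so the essential task is to prove $A_i(v)\le C\|\nabla u\|^p_{L^p(\mathcal{C},w)}$ for each $i\in\{1,\ldots,N\}$, where $v:=u(\,\cdot\,,0)$.

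Fix $i$, fix the coordinates $x_j$ for $j\ne i$, and let $t,\tau\in(0,1)$ with $t\ne\tau$. Set $\lambda:=|t-\tau|$. The key observation is the identity
\begin{equation*}
\Phi_i(x_t^i,\tau) = \mathrm{G}_s(x_t^i)|t-\tau|^{1-2s(x_t^i)} = w(x_t^i,\lambda),
\end{equation*}
which shows that the Hardy-type weight $\Phi_i$ is exactly the cylindrical weight $w$ evaluated at the natural scale $\lambda$. Exploiting this, I would connect $(x_t^i,0)$ and $(x_\tau^i,0)$ by a three-segment path through $\mathcal{C}$ at height $\lambda$, using the fundamental theorem of calculus to write
\begin{equation*}
v(x_t^i)-v(x_\tau^i)=-\int_0^\lambda \partial_y u(x_t^i,y)\,\dif y + \int_\tau^t \partial_i u(x_{t'}^i,\lambda)\,\dif t' + \int_0^\lambda \partial_y u(x_\tau^i,y)\,\dif y.
\end{equation*}

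Applying H\"older's inequality to each of the three pieces against the weight $w$ bounds $|v(x_t^i)-v(x_\tau^i)|^p$ by a sum of integrals involving $|\nabla u|^p w$, with H\"older-conjugate factors of the form $\left(\int_0^\lambda w^{1-p'}\,\dif y\right)^{p-1}$ and $\left(\int_\tau^t w(\cdot,\lambda)^{1-p'}\,\dif t'\right)^{p-1}$. The lateral factors are finite thanks to \ref{H1}--\ref{H3} together with $p\ge 2$, and the horizontal factor is controlled by assumption \ref{H5}. Multiplying by $w_i(x_t^i,\tau)\le\min\{\varphi_i,\psi_i\}$, integrating in $t,\tau\in(0,1)$ and in $x_j$ for $j\ne i$, and invoking Tonelli converts the resulting iterated integrals into volume integrals over $\mathcal{C}$ against $w\,\dif X$. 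The weighted Hardy inequality of Lemma~\ref{Lem:HardyIneq} enters at the bookkeeping stage: the definitions of $\varphi_i$ and $\psi_i$ are engineered so that the Hardy factor $\left(\int\Phi_i^{1-p'}\right)^{-p}$ cancels the H\"older-conjugate factors up to constants, leaving a clean bound by $\|\nabla u\|^p_{L^p(\mathcal{C},w)}$. The min-weight $w_i$ arises because the two available Hardy applications---integrating in the $t$ direction first versus in the $\tau$ direction first---yield bounds with $\varphi_i$ and $\psi_i$ respectively, and either one may be used. This reweighting, matching Hardy and H\"older factors across the three segments, is the most delicate part of the argument.

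Summing over $i$ and combining with Theorem~\ref{Thm:FirstTrace} yields the full norm inequality on the dense subspace, whence $\mathrm{tr}_{\mathrm{Q}_N}$ extends uniquely to $\mathscr{H}_{0,L}^{1,p}(\mathcal{C},w)$. For the truncated cylinder $\mathcal{C}^\tau$ with $\tau\ge 1$, the argument applies verbatim: the scale $\lambda=|t-\tau|\le 1\le\tau$ keeps every $y$-integration inside $\mathcal{C}^\tau$, so no step uses values of $u$ outside the truncated domain.
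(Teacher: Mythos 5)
There is a genuine gap: your rectangular path is incompatible with the weights $\varphi_i,\psi_i$, and the ``cancellation'' you invoke does not happen. The identity $\Phi_i(x_t^i,\tau)=w(x_t^i,\lambda)$ with $\lambda=|t-\tau|$ holds only at the endpoint $t'=t$. Along your horizontal segment at fixed height $\lambda$, the cylindrical weight is $w(x_{t'}^i,\lambda)=\mathrm{G}_s(x_{t'}^i)\lambda^{1-2s(x_{t'}^i)}$, whereas the weight appearing in $\varphi_i$ is $\Phi_i(x_{t'}^i,\tau)=\mathrm{G}_s(x_{t'}^i)|t'-\tau|^{1-2s(x_{t'}^i)}$; since $|t'-\tau|<\lambda$ for interior $t'$, the ratio $\bigl(\lambda/|t'-\tau|\bigr)^{1-2s(x_{t'}^i)}$ goes to $0$ or $\infty$ depending on whether $s(x_{t'}^i)$ is above or below $1/2$, and it is unbounded in both directions whenever $s(\cdot)$ is non-constant. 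Consequently the H\"older-conjugate factor $\bigl(\int_\tau^t w(x_{t'}^i,\lambda)^{1-p'}\,\dif t'\bigr)^{p-1}$ is not comparable to $\bigl(\int_\tau^t \Phi_i(x_{t'}^i,\tau)^{1-p'}\,\dif t'\bigr)^{p-1}$, which is what $\varphi_i$ was built to cancel, and the sketched bookkeeping breaks down at exactly the point where variable $s(\cdot)$ makes the problem hard.

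The paper avoids this by a different path: it sets $v(t,\tau)=u(t,|t-\tau|)$ and decomposes $u(t,0)-u(\tau,0)=v(t,t)-v(\tau,\tau)$ along the two legs of the triangle in the $(t,\tau)$-plane, which in the cylinder is a \emph{diagonal} from $(\tau,0)$ to $(t,\lambda)$ followed by a vertical descent to $(t,0)$. Along the diagonal leg one has $\Phi_1(t',\tau)=w(t',t'-\tau)$ at \emph{every} $t'$ (not just the endpoint), and along the vertical leg $\Phi_1(t,\tau')=w(t,t-\tau')$ at every $\tau'$, so the Hardy weight $\rho=\Phi_1$ coincides with the ambient cylinder weight $w$ pointwise along each leg. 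The Hardy inequality is then applied \emph{before} integrating out the free coordinate -- once in $t$ for fixed $\tau$ (giving $\varphi_1$) and once in a reflected variable $\tilde\tau=-\tau$ for fixed $t$ (giving $\psi_1$), with $w_1=\min\{\varphi_1,\psi_1\}$ majorized by both. Your scheme applies H\"older first to get a pointwise bound in $(t,\tau)$ and then integrates, which is a structurally different operation; it would require a different choice of $w_i$ tied to the rectangular path, not the $\varphi_i,\psi_i$ of Definition~\ref{Def:FractSpace}. The overall frame of your proposal (prove the $A_i$ estimate on the dense subspace, combine with Theorem~\ref{Thm:FirstTrace}, extend by density; observe that the truncated case does not leave $\mathcal{C}^\tau$ since $\lambda\le1\le\tau$) is right, and you correctly spotted that the weight identity $\Phi_i=w$ is the engine of the proof, but you put the path in the wrong place.
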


\begin{proof}

\noindent For the sake of simplicity, we give the proof only for $N=1$; with the natural changes, 
the proof adapts straightforward to the case $N\geq 2$.

Let $u\in \mathscr{H}_{0,L}^{1,p}(\mathcal{C},w)\cap C^\infty_c(\mathcal{C}_{\mathrm{Q}_1})$. Initially, we write:
\begin{align}\label{Proof:SecTrace-0}
A_1(u(\,\cdot\,,0))=\,&I_1+I_2,
\end{align}
where:
\begin{align*}
I_1:=\,&\int_0^1\int_0^tw_1(t,\tau)|u(t,0)-u(\tau,0)|^p\,\dif \tau\,\dif t,\\
I_2:=\,&\int_0^1\int_t^1w_1(t,\tau)|u(t,0)-u(\tau,0)|^p\,\dif \tau\,\dif t,
\end{align*}
where $w_1=\min\{\varphi_1,\psi_1\}$ as in Definition \ref{Def:FractSpace}. Next, we shall estimate $I_1$ and $I_2$ separately. For this, we introduce the auxiliary function $v:\mathrm{Q}_2\to\mathbb{R}$ given by
\begin{equation*}
v(t,\tau)=u(t,\max\{t,\tau\}-\min\{t,\tau\}).
\end{equation*}

We have:
\begin{align*}
I_1=\,&\int_0^1\int_0^tw_1(t,\tau)|v(t,t)-v(\tau,\tau)|^p\,\dif \tau\,\dif t\\
=\,&\int_0^1\int_0^tw_1(t,\tau)\left|\int_\tau^tD_1v(t',\tau)\,\dif t'+\int_\tau^tD_2v(t,\tau')\,\dif \tau'\right|^p\,\dif \tau\,\dif t\\
\leq\,&2^{p-1}\int_0^1\int_0^tw_1(t,\tau)\left|\int_\tau^tD_1v(t',\tau)\,\dif t'\right|^p\,\dif \tau\,\dif t\\
&+2^{p-1}\int_0^1\int_0^tw_1(t,\tau)\left|\int_\tau^tD_2v(t,\tau')\,\dif \tau'\right|^p\,\dif \tau\,\dif t,
\end{align*}
where $D_1v$ and $D_2v$ denote the partial derivative of $v$ with respect to the first and second coordinates, respectively. 

Interchanging the order of integration in the first term of the right hand side of the above inequality, and introducing the 
change of variable $\tilde{\tau}=-\tau$ in the second one, we find:
\begin{align}\label{Proof:SecTrace-1}
I_1\leq\,&2^{p-1}\int_0^1\int_\tau^1w_1(t,\tau)|f_1(t,\tau)|^p\,\dif t\,\dif \tau
+2^{p-1}\int_0^1\int_{-t}^0w_1(t,-\tilde{\tau})|f_2(t,\tilde{\tau})|^p\,\dif \tilde{\tau}\,\dif t,
\end{align}
where:
\begin{align*}
f_1(t,\tau)=\,\int_\tau^tD_1v(t',\tau)\,\dif t', & & \text{and} & & f_2(t,\tilde{\tau})=\,\int_t^{-\tilde{\tau}}D_2v(t,\tau')\,\dif \tau'.
\end{align*}

The function $f_1(\,\cdot\,,\tau)$ is absolutely continuous in $(\tau,1)$ and satisfies $\lim_{t\to\tau^+} f_1(t,\tau)=0$ for almost all $\tau\in(0,1)$. Additionally, by definition we observe that
\begin{align*}
\varphi_1(t,\tau)=\Phi_1(t,\tau)^{1-p'}\left(\int_\tau^t\Phi_1(t',\tau)^{1-p'}\,\dif t'\right)^{-p},\qquad\qquad\forall\,t\geq\tau,
\end{align*}
for almost all $\tau\in(0,1)$. Then, by Lemma \ref{Lem:HardyIneq}, we have:
\begin{equation}\label{Proof:SecTrace-2}
\int_\tau^1\varphi_1(t,\tau)|f_1(t,\tau)|^p\,\dif t\leq
C_H(p)\int_\tau^1\Phi_1(t,\tau)\left|D_1v(t,\tau)\right|^p\,\dif t,
\end{equation}
for almost all $\tau\in(0,1)$.

Similarly, the function $f_2(t,\,\cdot\,)$ is absolutely continuous in $(-t,0)$ and satisfies $\lim_{\tilde{\tau}\to-t^+}f_2(t,\tilde{\tau})=0$ for almost all $t\in(0,1)$. Since
\begin{align*}
\psi_1(t,-\tilde{\tau})
=\,&\Phi_1(t,-\tilde{\tau})^{1-p'}\left(\int_{-t}^{\tilde{\tau}}\Phi_1(t,-\tilde{\tau}')^{1-p'}\,\dif\tilde{\tau}'\right)^{-p},
\qquad\forall\,\tilde{\tau}\geq-t,
\end{align*}
for almost all $t\in(0,1)$, it follows by Lemma \ref{Lem:HardyIneq} that
\begin{equation}\label{Proof:SecTrace-3}
\int_{-t}^0\psi_1(t,-\tilde{\tau})|f_2(t,\tilde{\tau})|^p\,\dif \tilde{\tau}\leq
C_H(p)\int_{-t}^0\Phi_1(t,-\tilde{\tau})\left|D_2v(t,-\tilde{\tau})\right|^p\,\dif \tilde{\tau},
\end{equation}
for almost all $t\in(0,1)$.

Then, since $w_1=\min\{\varphi_1,\psi_1\}$, the estimation \eqref{Proof:SecTrace-1} in conjunction with \eqref{Proof:SecTrace-2} and \eqref{Proof:SecTrace-3} yields:
\begin{align*}
I_1
\leq\,&C_H(p)\,2^{p-1}\int_0^1\int_\tau^1\Phi_1(t,\tau)\left|D_1v(t,\tau)\right|^p\dif t\,\dif \tau\\
&+C_H(p)\,2^{p-1}\int_0^1\int_{-t}^0\Phi_1(t,-\tilde{\tau})\left|D_2v(t,-\tilde{\tau})\right|^p\dif \tilde{\tau}\,\dif t.
\end{align*}
Interchanging the order of integration in the first term of the r.h.s., and making the change of variable $\tau=-\tilde{\tau}$ in the second one, we obtain:
\begin{align}\label{Proof:SecTrace-4}
I_1
\leq\,&C_H(p)\,2^{p-1}\int_0^1\int_0^t\Phi_1(t,\tau)\left(\,\left|D_1v(t,\tau)\right|^p+\left|D_2v(t,\tau)\right|^p\,\right)\dif \tau\,\dif t.
\end{align}
Since the function $v$ is given by $v(t,\tau)=u(t,t-\tau)$ for $t>\tau$, we have: 
\begin{align*}
D_1v(t,\tau)=\,&
D_1u(t,t-\tau)+D_2u(t,t-\tau),\\
D_2v(t,\tau)=\,&
-D_2u(t,t-\tau).
\end{align*}
Then, 
\begin{align*}
\left|D_1v(t,\tau)\right|^p+\left|D_2v(t,\tau)\right|^p
\leq\,&
\left(\,\left|D_1u(t,t-\tau)\right|+\left|D_2u(t,t-\tau)\right|\,\right)^p+\left|D_2u(t,t-\tau)\right|^p\\
\leq\,& (2^{p-1}+1)\left(\,\left|D_1u(t,t-\tau)\right|^p+\left|D_2u(t,t-\tau)\right|^p\,\right)\\
\leq\,& 2^{p/2}(2^{p-1}+1)|\nabla u(t,t-\tau)|^p.
\end{align*}
Using this estimation in \eqref{Proof:SecTrace-4} and then making the change of variable $y=t-\tau$ in the inner integral, we find:
\begin{align*}
I_1
\leq\,&C_H(p)\,2^{p-1}2^{p/2}(2^{p-1}+1)\int_0^1\int_0^t\Phi_1(t,\tau)|\nabla u(t,t-\tau)|^p\dif \tau\,\dif t\\
=\,&C_H(p)\,2^{p-1}2^{p/2}(2^{p-1}+1)\int_0^1\int_0^t\Phi_1(t,t-y)|\nabla u(t,y)|^p\dif y\,\dif t.
\end{align*}
Hence,
\begin{align}\label{Proof:SecTrace-5}
I_1
\leq\,&C_H(p)\,2^{p-1}2^{p/2}(2^{p-1}+1)\int_0^1\int_0^1w(t,y)|\nabla u(t,y)|^p\dif y\,\dif t.
\end{align}

To estimate $I_2$, we first write:
\begin{align*}
I_2=\,&\int_0^1\int_t^1w_1(t,\tau)|v(t,t)-v(\tau,\tau)|^p\,\dif \tau\,\dif t\\
=\,&\int_0^1\int_0^\tau w_1(t,\tau)|v(t,t)-v(\tau,\tau)|^p\,\dif t\,\dif \tau,
\end{align*}
and notice that, in general, $I_2\neq I_1$ since $w_1(t,\tau)\neq w_1(\tau,t)$ for $s(\cdot)$ not constant. However, similarly as we obtained \eqref{Proof:SecTrace-5}, we identify the same bound for $I_2$:
\begin{align}\label{Proof:SecTrace-6}
I_2
\leq\,&C_H(p)\,2^{p-1}2^{p/2}(2^{p-1}+1)\int_0^1\int_0^1w(t,y)|\nabla u(t,y)|^p\dif y\,\dif t.
\end{align}

Using \eqref{Proof:SecTrace-5} and \eqref{Proof:SecTrace-6} in  \eqref{Proof:SecTrace-0}, we obtain:
\begin{align*}
A_1(u(\,\cdot\,,0))
\leq\,&C_H(p)\,2^p\,2^{p/2}(2^{p-1}+1)\|\nabla u\|_{L^p(\mathcal{C},w)}^p,
\end{align*}
hence,
\begin{align*}
A_1(u(\,\cdot\,,0))
\leq\,&C_H(p)\,2^p\,2^{p/2}(2^{p-1}+1)\|u\|_{W^{1,p}(\mathcal{C},w)}^p.
\end{align*}

In addition, we know by the Theorem \ref{Thm:FirstTrace}  that:
\begin{equation*}
\|u(\,\cdot\,,0)\|_{L^p(\mathrm{Q}_1,\tilde{w})}^p\leq (1+p)^p\sigma^{-2p/p'}\|u\|_{W^{1,p}(\mathcal{C},w)}^p,
\end{equation*} 
where $\sigma$ is some arbitrary, but fixed, number in $(0,1)$.

Therefore, $u(\,\cdot\,,0)\in \mathbb{W}^{s(\cdot),p}(\mathrm{Q}_1,\tilde{w},w_1)$ and
\begin{equation*}
\|u(\,\cdot\,,0)\|_{\mathbb{W}^{s(\cdot),p}(\mathrm{Q}_1,\tilde{w},w_1)}\leq C(p,\sigma)\|u\|_{W^{1,p}(\mathcal{C},w)},
\end{equation*}
where $C(p,\sigma)=(C_H(p)\,2^p\,2^{p/2}(2^{p-1}+1)+(1+p)^p\sigma^{-2p/p'})^{1/p}$.

The operator $\mathrm{tr}_{\mathrm{Q}_1}$ is the unique bounded linear extension of the map $u\mapsto u(\,\cdot\,,0)$ to $\mathscr{H}_{0,L}^{1,p}(\mathcal{C},w)$. 

The proof when $\mathscr{H}_{0,L}^{1,p}(\mathcal{C},w)$ is replaced by $\mathscr{H}_{0,L}^{1,p}(\mathcal{C}^\tau,w)$ where $\tau\geq 1$ is identical.
\end{proof}

\begin{remark}[Surjectivity of trace operator]\label{rem:surj}
Although the previous result represents an improvement on the $\Omega$-trace characterization for functions in $\mathscr{H}^{1,p}_{0,L}(\mathcal{C},w)$, nothing can be said about the surjectivity of the trace operator $\tr:\mathscr{H}^{1,p}_{0,L}(\mathcal{C},w)\to \mathbb{W}^{s(\cdot),p}(Q_N,w,w_1,\hdots,w_N)$ for $s(\cdot)$ non-constant.
\end{remark}

{\begin{remark}
If $s(\cdot)=s\in(0,1)$ is constant, then it follows by Theorem \ref{Thm:SecondConstant-Extra} that \linebreak $\mathbb{W}^{s,2}(\mathrm{Q}_N,\tilde{w},w_1,\hdots,w_N)\hookrightarrow W^{1-\frac{2(1-s)}{p},p}(\mathrm{Q}_N)$. Hence, the trace result in Theorem \ref{Thm:FirstTrace} is again in accordance to the classical case, see \cite[Theorem 2.8]{Ne1993} (see also Remark \ref{RK:Classic1}). Moreover, if $p=2$ and $s\in(0,1/2)$, we observe that 
\begin{equation}
\mathbb{W}^{s,2}(\mathrm{Q}_N,\tilde{w},w_1,\hdots,w_N)\hookrightarrow H,
\end{equation}
since $\mathbb{H}^s(\mathrm{Q}_N)=\mathbb{H}_0^s(\mathrm{Q}_N)=H$, so in this case we further partially recover the trace result in \cite[Lemma 2.2]{CaEtAl2011} given that $\mathscr{H}^{1,p}_{0,L}(\mathcal{C},y^{1-2s})\subset H^1_{0,L}(\mathcal{C},y^{1-2s})$.  
\end{remark}}


\section{Cases where the Poincar\'{e} inequality holds}\label{Sect:PoincareIneq}

We address now in this section cases and conditions on  $s(\cdot)$ not constant that are sufficient for the Poincar\'e inequality to hold true. Two results are given, one in the entire cylinder and one in the truncated {cylinder}; see  Theorem \ref{Thm:PoincareStep} and Theorem \ref{Thm:PoincareStepBis} respectively. From now on until the end of the section, we assume that $\mathrm{G}_s=\mathrm{G}_s^{(1)}$ constant, see \eqref{Eq:Gexamples} in Example \ref{Ex:sG}; and $s(\cdot)$ is given by
\begin{equation}\label{Eq:Step-s}
s(\cdot)=\sum_{i=1}^Ms_i\mathds{1}_{\Omega_i}(\cdot),
\end{equation}
where $s_i\in(0,1)$ for $i=1,\hdots,M$ and $\{\Omega_i:i=1,\hdots,M\}$ is a finite collection of non-empty open subsets of $\Omega$ that satisfies $\bigcup_{i=1}^M\bar{\Omega}_i=\bar{\Omega}$.
In other words, we assume that $s(\cdot)$ is a step function in $\Omega$ with range contained in the interval $(0,1)$. Our first example is given by the next theorem which basically states that the Poincar\'{e} inequality holds provided that all pieces $\Omega_i$ of the partition of $\Omega$ touch the boundary $\partial\Omega$.

\begin{theorem}\label{Thm:PoincareStep}
Assume that $\mathrm{G}_s=\mathrm{G}_s^{(1)}$ constant and $s(\cdot)$ is given by \eqref{Eq:Step-s}. If
\begin{equation}\label{Eq:Touch}
|\partial{\Omega}_i\cap\partial{\Omega}|>0,\quad\qquad\forall\,i=1,\hdots,M,
\end{equation}
then there exists a positive constant $C_P(p,\Omega_1,\hdots,\Omega_M)$ that satisfies 
\begin{equation}\label{Eq:PoincareStep}
\|u\|_{L^p(\mathcal{C},w)}\leq C_P(p,\Omega_1,\hdots,\Omega_M)\|\nabla u\|_{L^p(\mathcal{C},w)},
\end{equation}
for all $u\in\mathscr{H}^{1,p}_{0,L}(\mathcal{C},w)$.
\end{theorem}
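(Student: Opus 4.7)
The plan is to establish the inequality first for smooth test functions, for which a horizontal slicing argument reduces the problem to a classical Poincar\'e inequality on each piece $\Omega_i$, and then extend to all of $\mathscr{H}^{1,p}_{0,L}(\mathcal{C},w)$ by density. Since $C^\infty_c(\mathcal{C}_\Omega)\cap W^{1,p}(\mathcal{C},w)$ is dense in $\mathscr{H}^{1,p}_{0,L}(\mathcal{C},w)$ by the very definition of the latter, and both sides of \eqref{Eq:PoincareStep} are continuous in the $W^{1,p}(\mathcal{C},w)$-norm, it suffices to prove \eqref{Eq:PoincareStep} for $u\in C^\infty_c(\mathcal{C}_\Omega)$.

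For such a smooth $u$, I would exploit the piecewise-constant structure of $s(\cdot)$ by writing $\mathcal{C}=\bigcup_{i=1}^M\mathcal{C}_i$ with $\mathcal{C}_i:=\Omega_i\times(0,\infty)$, and observing that $w(x,y)=\mathrm{G}_s^{(1)} y^{1-2s_i}$ on $\mathcal{C}_i$ is a classical Muckenhoupt weight (since $s_i\in(0,1)$ gives $1-2s_i\in(-1,1)\subset(-1,p-1)$). For each fixed $y>0$ the slice $u(\cdot,y)$ is smooth on $\bar{\Omega}$ and vanishes in a neighborhood of $\partial\Omega$; in particular its restriction to $\Omega_i$ vanishes on $\partial\Omega_i\cap\partial\Omega$, a set of positive $(N-1)$-dimensional Hausdorff measure by \eqref{Eq:Touch}. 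Provided each $\Omega_i$ is regular enough (say Lipschitz, an assumption implicit in the setting), the classical mixed-boundary Poincar\'e inequality then yields
\begin{equation*}
\int_{\Omega_i}|u(x,y)|^p\dif x \,\leq\, c_i \int_{\Omega_i}|\nabla_x u(x,y)|^p\dif x, \qquad \forall\, y>0,
\end{equation*}
with $c_i=c_i(p,\Omega_i)>0$ independent of $y$.

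To conclude, I would multiply the previous inequality by $\mathrm{G}_s^{(1)} y^{1-2s_i}$, integrate over $y\in(0,\infty)$, use the pointwise bound $|\nabla_x u|\leq|\nabla u|$, and sum over $i=1,\ldots,M$. Since $s(x)=s_i$ on $\Omega_i$, the weighted integrals reassemble and give
\begin{equation*}
\|u\|_{L^p(\mathcal{C},w)}^p \,\leq\, \Bigl(\max_{1\leq i\leq M} c_i\Bigr)\,\|\nabla u\|_{L^p(\mathcal{C},w)}^p,
\end{equation*}
so \eqref{Eq:PoincareStep} holds with $C_P(p,\Omega_1,\ldots,\Omega_M)=(\max_i c_i)^{1/p}$, which clearly depends only on $p$ and the geometry of the pieces. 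The main obstacle in this program is the invocation of the classical Poincar\'e inequality on each $\Omega_i$ with zero trace on only the portion $\partial\Omega_i\cap\partial\Omega$ of the boundary: it is a standard fact for Lipschitz sub-domains with a trace-zero boundary portion of positive surface measure, but to avoid even this implicit regularity assumption one can substitute a direct line-integral representation $u(x,y)-u(x^*,y)=\int_0^1\nabla_x u(x^*+t(x-x^*),y)\cdot(x-x^*)\,\dif t$ with $x\in\Omega_i$ and $x^*\in\partial\Omega_i\cap\partial\Omega$, which still requires a mild accessibility/connectedness hypothesis on $\Omega_i$ but gives the slice-wise inequality essentially for free.
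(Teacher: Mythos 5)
Your proof is correct and follows essentially the same route as the paper's: fix a height $y$, apply the classical Poincar\'e inequality on each $\Omega_i$ for the slice $u(\cdot,y)$ (using that it vanishes on the positive-measure portion $\partial\Omega_i\cap\partial\Omega$), multiply by the $y$-dependent weight, integrate over $y$, bound $|\nabla_x u|\leq|\nabla u|$, sum over $i$, and close by density. The only cosmetic differences are that you take $\max_i c_i$ where the paper takes $\sum_i c_i$ (both valid, yours slightly sharper), and that you make explicit the mild regularity assumption on the subdomains $\Omega_i$ which the paper leaves implicit.
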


\begin{proof}
The proof is quite direct, thanks to the existence of a Poincar\'e inequality for functions in $C^\infty(\Omega_i)$ that vanish on a subset of non-zero measure of $\partial\Omega_i$: Let $u\in C_c^\infty(\mathcal{C}_\Omega)\cap W^{1,p}(\mathcal{C},w)$ and $i\in\{1,\hdots,M\}$. For every $y>0$, the function $u(\,\cdot\,,y)$ belongs to $C^\infty(\Omega_i)$ and vanishes on a portion with non-zero measure of $\partial\Omega_i$, by \eqref{Eq:Touch}. Then, by the Poincar\'e inequality, we have
\begin{equation}\label{Proof:PoincareStep-1}
\int_{\Omega_i}|u(x,y)|^p\,\dd x\leq c_i\int_{\Omega_i}|\nabla_x u(x,y)|^p\,\dd x,
\end{equation}
where $c_i$ is a positive constant that depends only on $\Omega_i$ and $p$, and $\nabla_x u$ is the gradient of $u$ with respect to the first $N$ coordinates.
Multiplying \eqref{Proof:PoincareStep-1} by $y^{1-2s_i}$, then integrating for $y\in(0,\infty)$, and finally adding up for $i=1,\hdots,M$, we obtain
\begin{equation*}
\int_{\mathcal{C}}y^{1-2s(x)}|u(x,y)|^p\,\dd X\leq c\int_{\mathcal{C}}y^{1-2s(x)}|\nabla_x u(x,y)|^p\,\dd X,
\end{equation*}
where $c=c_1+\hdots+c_M$. Since $|\nabla_xu|^p\leq|\nabla u|^p$ in $\mathcal{C}$ and $\mathrm{G}_s$ is constant, we get
\begin{equation*}
\int_{\mathcal{C}}w(x,y)|u(x,y)|^p\,\dd X\leq c\int_{\mathcal{C}}w(x,y)|\nabla u(x,y)|^p\,\dd X,
\end{equation*}
for all $u\in C_c^\infty(\mathcal{C}_\Omega)\cap W^{1,p}(\mathcal{C},w)$. Now \eqref{Eq:PoincareStep} follows by density.
\end{proof}

Next we prove that the truncated domain allows a much more amenable result than the one in the complete cylinder $\mathcal{C}$. In particular, we prove that \eqref{Eq:Step-s} is a sufficient condition for the Poincar\'{e} inequality to hold; the result is given in next Theorem \ref{Thm:PoincareStepBis}. The proof requires the following auxiliary lemma, see \cite[Theorem 5.2]{Ku1980} for its proof.

\begin{lemma}[\textsc{Classical Hardy inequality}]\label{Lem:ClassicalHardyIneq}
Let $\varepsilon>p-1$ and let $f$ be a differentiable function almost everywhere in $(0,\infty)$ that satisfies $\lim_{t\to \infty}f(t)=0$. If
\begin{equation*}
\int_0^\infty t^\varepsilon|f'(t)|^p\,\dif t<\infty,
\end{equation*}
then
\begin{equation*}
\int_0^\infty t^{\varepsilon-p}|f(t)|^p\,\dif t\leq C_H(p,\varepsilon)\int_0^\infty t^\varepsilon|f'(t)|^p\,\dif t<\infty,
\end{equation*}
where $C_H(p,\varepsilon)=p^p/(\varepsilon-p+1)^p$.
\end{lemma}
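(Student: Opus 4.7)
The plan is the classical self-improvement argument: apply integration by parts to the left-hand side to trade a power of $f$ for $f'$, then invoke H\"older's inequality with conjugate exponents $p/(p-1)$ and $p$, and finally absorb the resulting power of the left-hand side into itself. Throughout I would first assume $f$ is smooth and compactly supported in $(0,\infty)$, so that all computations below are legitimate; the general case will follow by a truncation/density argument, replacing $f$ by $f_n = f\cdot \chi_n$ for suitable cutoff functions $\chi_n$ supported in $(1/n,n)$, and passing to the limit by monotone convergence.

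\textbf{Step 1 (Integration by parts).} Let $I$ denote the left-hand side. Because $\varepsilon > p-1$, we have $\alpha := \varepsilon - p + 1 > 0$, and hence $t^{\varepsilon-p} = \alpha^{-1}(t^{\alpha})'$. Writing $|f|^p = (|f|^2)^{p/2}$ and differentiating, integration by parts gives
\begin{equation*}
I \,=\, \frac{1}{\alpha}\bigl[t^{\alpha}|f(t)|^p\bigr]_0^\infty \,-\, \frac{p}{\alpha}\int_0^\infty t^{\alpha}\,|f(t)|^{p-2}f(t)\,f'(t)\,\dif t.
\end{equation*}

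\textbf{Step 2 (H\"older and closing).} Factor the integrand in the last display as $\bigl[t^{(\varepsilon-p)(p-1)/p}|f(t)|^{p-1}\bigr]\cdot \bigl[t^{\varepsilon/p}|f'(t)|\bigr]$; a direct computation verifies that the exponents of $t$ sum to $\alpha$. Applying H\"older's inequality with conjugate exponents $p/(p-1)$ and $p$ then yields
\begin{equation*}
\left|\int_0^\infty t^{\alpha}\,|f|^{p-2}f\,f'\,\dif t\right|
\,\leq\,
\left(\int_0^\infty t^{\varepsilon-p}|f(t)|^{p}\,\dif t\right)^{\!(p-1)/p}
\left(\int_0^\infty t^{\varepsilon}|f'(t)|^{p}\,\dif t\right)^{\!1/p}.
\end{equation*}
Combining this with Step 1 (and granting the vanishing of the boundary term, see below) gives $I \leq (p/\alpha)\,I^{(p-1)/p}\,J^{1/p}$, where $J$ is the right-hand side integral. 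If $I > 0$, dividing by $I^{(p-1)/p}$ and raising to the $p$th power produces $I \leq (p/\alpha)^p J$, which is exactly the claim with $C_H(p,\varepsilon) = p^p/(\varepsilon-p+1)^p$; the case $I = 0$ is trivial.

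\textbf{Main obstacle.} The delicate point is the vanishing of the boundary term $\alpha^{-1}[t^\alpha|f(t)|^p]_0^\infty$. At $t=\infty$ the hypothesis $\lim_{t\to\infty}f(t)=0$ does not by itself control $t^\alpha$ (recall $\alpha > 0$), so one must combine $f(\infty)=0$ with the a priori finiteness of $J$: from $f(t) = -\int_t^\infty f'(s)\dif s$ and H\"older one shows $|f(t)|^p\,t^\alpha \to 0$, using that $\int_t^\infty s^{-\varepsilon/(p-1)}\dif s$ converges precisely when $\varepsilon > p-1$. At $t=0^+$, the integrability $\int_0 t^{\varepsilon-p}|f|^p\dif t < \infty$ (which is what we are trying to bound and may be assumed finite by a further truncation at $t=1/n$) forces $t^\alpha|f(t)|^p \to 0$ along at least a subsequence. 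The standard way to avoid this technicality altogether is to carry out Steps 1--2 on $(1/n,\,n)$ with a smooth $f$ vanishing near both endpoints, where the boundary term is identically zero, and then remove the truncation; the monotone convergence theorem then transfers the estimate to the claimed class of $f$.
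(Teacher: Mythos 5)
The paper does not actually supply a proof of this lemma: it cites \cite[Theorem~5.2]{Ku1980}, so there is no in-paper argument to compare against. Your skeleton (integration by parts, H\"older with conjugate exponents $p/(p-1)$ and $p$, and absorption of the resulting power of $I$) is the standard self-improvement argument and is algebraically correct; you get the exponent check $t^{(\varepsilon-p)(p-1)/p}\cdot t^{\varepsilon/p}=t^{\alpha}$ right, and your analysis of the boundary term at $t=\infty$ (via $f(t)=-\int_t^\infty f'$, H\"older, and the convergence of $\int_t^\infty s^{-\varepsilon/(p-1)}\,\mathrm{d}s$ exactly when $\varepsilon>p-1$) is sound and yields $t^\alpha|f(t)|^p\le C\int_t^\infty s^\varepsilon|f'|^p\,\mathrm{d}s\to 0$.

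There are, however, two genuine gaps at the low end. First, you overcomplicate the boundary term at $t=0^+$: since $\alpha=\varepsilon-p+1>0$, the term $\bigl[t^\alpha|f(t)|^p\bigr]$ evaluated at $t=\delta$ enters with a \emph{minus} sign, so $-\alpha^{-1}\delta^\alpha|f(\delta)|^p\le 0$ and can simply be discarded; there is no need to prove it tends to zero, and in fact it need not. Second, and more seriously, the cutoff route you propose ($f_n=f\chi_n$ with $\chi_n$ supported in $(1/n,n)$, then monotone convergence) does not deliver the sharp constant $p^p/(\varepsilon-p+1)^p$. The derivative $f_n'=f'\chi_n+f\chi_n'$ contributes a term $\int_{1/(2n)}^{1/n} t^\varepsilon |\chi_n'|^p|f|^p\,\mathrm{d}t$ which, after inserting the pointwise bound $|f(t)|^p\le C\,t^{p-1-\varepsilon}\int_t^\infty s^\varepsilon|f'|^p\,\mathrm{d}s$, is of size comparable to $J$ uniformly in $n$; it is bounded but does not vanish as $n\to\infty$, so the limiting inequality has a worse constant. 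The clean route is to integrate by parts on $(\delta,R)$ directly (where $f$ is bounded, so $I_{\delta,R}:=\int_\delta^R t^{\varepsilon-p}|f|^p\,\mathrm{d}t<\infty$ without further truncation), drop the nonpositive $\delta$-boundary term, use your $R\to\infty$ estimate to kill the other one, obtain $I_{\delta,R}\le \alpha^{-1}R^\alpha|f(R)|^p+(p/\alpha)\,I_{\delta,R}^{(p-1)/p}J^{1/p}$, and then let $\delta\to 0$ followed by $R\to\infty$: the functional inequality $I\le o(1)+(p/\alpha)I^{(p-1)/p}J^{1/p}$ forces $I<\infty$ and afterwards yields $I\le (p/\alpha)^pJ$ with the exact constant. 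With that repair, the rest of the argument stands.
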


We are now in a position to present the final result in this section.

\begin{theorem}\label{Thm:PoincareStepBis}
Assume that $\mathrm{G}_s=\mathrm{G}_s^{(1)}$ constant and $s(\cdot)$ is given by \eqref{Eq:Step-s}. For every $\tau>0$ there exists a positive constant $C_P(\tau,p,\Omega_1,\hdots,\Omega_M)$ that satisfies 
\begin{equation}\label{Eq:PoincareStepBis}
\|u\|_{L^p(\mathcal{C}^\tau,w)}\leq C_P(\tau,p,\Omega_1,\hdots,\Omega_M)\|\nabla u\|_{L^p(\mathcal{C}^\tau,w)},
\end{equation}
for all $u\in\mathscr{H}^{1,p}_{0,L}(\mathcal{C}^\tau,w)$.
\end{theorem}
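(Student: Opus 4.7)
The idea is to exploit the fact that functions in $\mathscr{H}_{0,L}^{1,p}(\mathcal{C}^\tau,w)$ vanish at the top $\Omega\times\{\tau\}$ (since they are built as closures of smooth functions with compact support in $\mathcal{C}^\tau_\Omega=\mathcal{C}^\tau\cup(\Omega\times\{0\})$), and so we can integrate $\partial_y u$ from $y$ up to $\tau$ and apply a one-dimensional weighted Hardy estimate in the $y$-variable pointwise in $x$. The finite-range, step-function structure of $s(\cdot)$ guarantees that the resulting constant is uniform in $x\in\Omega$.

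\textbf{Step 1 (representation).} For $u\in C^\infty_c(\mathcal{C}^\tau_\Omega)\cap W^{1,p}(\mathcal{C}^\tau,w)$ and $x\in\Omega$ fixed, use $u(x,\tau)=0$ to write
\begin{equation*}
u(x,y)=-\int_y^\tau \partial_y u(x,z)\,\dif z,\qquad y\in(0,\tau).
\end{equation*}

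\textbf{Step 2 (pointwise Hardy-type estimate).} Set $\delta(x):=1-2s(x)$, which by \eqref{Eq:Step-s} takes only the finitely many values $\delta_i=1-2s_i\in(-1,1)$. Apply H\"older's inequality in the representation of Step 1, splitting $|\partial_y u(x,z)|=(z^{\delta(x)/p}|\partial_y u(x,z)|)\cdot z^{-\delta(x)/p}$, to obtain
\begin{equation*}
|u(x,y)|^p\leq \left(\int_y^\tau z^{-\delta(x)/(p-1)}\,\dif z\right)^{p-1}\int_0^\tau z^{\delta(x)}|\partial_y u(x,z)|^p\,\dif z.
\end{equation*}
Because $\delta(x)<1\leq p-1$, the exponent $-\delta(x)/(p-1)>-1$, so $\int_0^\tau z^{-\delta(x)/(p-1)}\,\dif z$ is finite, and its value is bounded uniformly in $x$ by $K_1:=\max_{i}\tau^{1-\delta_i/(p-1)}/(1-\delta_i/(p-1))$.

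\textbf{Step 3 (integration in $y$).} Multiply the resulting inequality by $y^{\delta(x)}$ and integrate over $y\in(0,\tau)$. Since $\delta(x)>-1$ uniformly, $\int_0^\tau y^{\delta(x)}\,\dif y\leq K_2:=\max_i \tau^{1+\delta_i}/(1+\delta_i)$. This yields, for every $x\in\Omega$,
\begin{equation*}
\int_0^\tau y^{\delta(x)}|u(x,y)|^p\,\dif y\leq K_1^{p-1}K_2 \int_0^\tau z^{\delta(x)}|\partial_y u(x,z)|^p\,\dif z.
\end{equation*}

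\textbf{Step 4 (integration in $x$ and density).} Multiply by the constant $\mathrm{G}_s=\mathrm{G}_s^{(1)}$ (here assumption \ref{H2} is crucial to pull the constant out), integrate over $\Omega$, and use $|\partial_y u|^p\leq|\nabla u|^p$ to get
\begin{equation*}
\|u\|_{L^p(\mathcal{C}^\tau,w)}^p\leq C\|\nabla u\|_{L^p(\mathcal{C}^\tau,w)}^p
\end{equation*}
for every $u\in C^\infty_c(\mathcal{C}^\tau_\Omega)\cap W^{1,p}(\mathcal{C}^\tau,w)$, with $C=C(\tau,p,\Omega_1,\ldots,\Omega_M)$. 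Extending by density to $\mathscr{H}_{0,L}^{1,p}(\mathcal{C}^\tau,w)$ yields \eqref{Eq:PoincareStepBis}.

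\textbf{Main obstacle.} There is no deep obstruction; the whole argument hinges on the vanishing at $y=\tau$ (which is lost in the untruncated case, explaining why Theorem \ref{Thm:PoincareStep} needs the geometric condition \eqref{Eq:Touch}) and on obtaining a constant independent of $x$. The latter is where the step-function structure with values in $(0,1)$ enters: it gives us finitely many values of $\delta(x)$, each strictly inside $(-1,p-1)$, so both bounds $K_1$ and $K_2$ are finite. If $s(\cdot)$ were allowed to approach $0$ or $1$ on a set of positive measure, $K_2$ or $K_1$ would blow up and the argument would fail in this simple form.
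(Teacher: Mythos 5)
Your proof is correct and takes a genuinely different (and simpler) route than the paper's. The paper decomposes each $I_i=\int_0^\tau y^{1-2s_i}\int_{\Omega_i}|u|^p$ by writing $u(\cdot,y)=[u(\cdot,y)-\bar u_i(y)]+\bar u_i(y)$, where $\bar u_i$ is the $x$-average over $\Omega_i$; the residual is handled slice-by-slice via the Poincar\'e--Wirtinger inequality in $x$, while $\bar u_i$ is extended by zero to $[0,\infty)$ and estimated with the classical Hardy inequality of Lemma~\ref{Lem:ClassicalHardyIneq}. Your proof dispenses with the averaging and the $x$-Poincar\'e altogether: it applies a one-dimensional Hardy/H\"older estimate in $y$ pointwise in $x$, using only $u(x,\tau)=0$. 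That vanishing is in fact what both proofs secretly rest on: Lemma~\ref{Lem:ClassicalHardyIneq} requires the input to be absolutely continuous (``differentiable a.e.''\ is too weak, as $f=\mathds{1}_{[0,\tau]}$ shows), so the zero extension $\bar u_{\mathrm{ext}}$ in the paper must satisfy $\bar u_i(\tau)=0$, i.e.\ $u$ must already vanish near $y=\tau$. Thus your Step~1 makes explicit an ingredient the paper uses tacitly.

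Two smaller remarks. First, the paper's roundabout decomposition buys some robustness at the endpoint $s=0$: the Hardy constant there is $p^p/(2-2s_i)^p$, which stays bounded as $s_i\to 0$, whereas in your argument (take $p=2$) the factor $K_1=\max_i \tau^{1-\delta_i/(p-1)}/(1-\delta_i/(p-1))$ blows up as $\delta_i=1-2s_i\to 1$, i.e.\ as $s_i\to 0$. Since the theorem assumes $s_i\in(0,1)$ this is immaterial, but it may explain why the paper chose the more involved route. Second, your parenthetical claim in Step~4 that the constancy of $\mathrm G_s$ ``is crucial to pull the constant out'' overstates its role: your estimate $\int_0^\tau y^{\delta(x)}|u(x,y)|^p\,\dif y\le K_1^{p-1}K_2\int_0^\tau z^{\delta(x)}|\partial_y u(x,z)|^p\,\dif z$ holds pointwise in $x$, so one may simply multiply both sides by $\mathrm G_s(x)\ge 0$ and integrate in $x$ regardless of whether $\mathrm G_s$ is constant; constancy of $\mathrm G_s$ is however genuinely needed in the proof of Theorem~\ref{Thm:PoincareStep}, where the $x$-Poincar\'e interacts with $\mathrm G_s$.
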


\begin{proof}
Let $\tau>0$ and $u\in C_c^\infty(\mathcal{C}_{\Omega}^\tau)\cap W^{1,p}(\mathcal{C}^\tau,w)$. Initially, we write:
\begin{align}\label{Proof:Poincare-1}
\int_{\mathcal{C}^\tau} y^{1-2s(x)}|u(x,y)|^p\,\dif X=\sum_{i=1}^MI_i,
\end{align}
where
\begin{align*}
I_i:=\,&\int_0^\tau y^{1-2s_i}\int_{\Omega_i} |u(x,y)|^p\,\dif x\,\dif  y.
\end{align*}

We denote by $c$ a positive constant that may depend only on $p$ and the partition $\{\Omega_i:i=1,\hdots,M\}$, whose numerical value may be different from one line to another.

Let $i\in\{1,\hdots,M\}$. We define
\begin{equation*}
\bar{u}_i(y)=\frac{1}{|\Omega_i|}\int_{\Omega_i}u(x,y)\,\dif x,
\end{equation*} 
and observe that 
\begin{align}\label{Proof:Poincare-2}
I_i 
\leq c\,(I_{i1}+I_{i2}),
\end{align}
where
\begin{align*}
I_{i1}:=\,&\int_0^\tau y^{1-2s_i}\int_{\Omega_i} |u(x,y)-\bar{u}_i(y)|^p\,\dif x\,\dif y,\\
I_{i2}:=\,&\int_0^\tau y^{1-2s_i}\int_{\Omega_i} |\bar{u}_i(y)|^p\,\dif x\,\dif y=
|\Omega_i|\int_0^\tau y^{1-2s_i}|\bar{u}_i(y)|^p\,\dif y.
\end{align*}
 
For each fixed $y\in(0,\tau)$, the function $u(\,\cdot\,,y)$ belongs to $C^\infty(\Omega_i)$. Thus, by the Poincar\'e-Wirtinger's inequality, we obtain:
\begin{align*}
\int_{\Omega_i} |u(x,y)-\bar{u}_i(y)|^p\,dx\leq c\,\int_{\Omega_i}|\nabla_x u(x,y)|^p\,\dif x.
\end{align*}
From this, similarly as in the proof of Theorem \ref{Thm:PoincareStep}, we find:
\begin{align}\label{Proof:Poincare-3}
I_{i1}\leq\,&c\int_0^\tau \int_{\Omega_i} y^{1-2s_i}|\nabla u(x,y)|^p\,\dif x\,\dif y.
\end{align}

Let $\bar{u}_{ext}$ be the extension by zero of $\bar{u}$ to $[0,\infty)$. Notice that $\bar{u}_{ext}$ is differentiable almost everywhere in $(0,\infty)$ since $u(x,\,\cdot\,)\in C^\infty([0,\tau])$ for all $x\in\Omega_i$, and, trivially, $\bar{u}_{ext}$ satisfies $\lim_{y\to \infty}\bar{u}_{ext}(y)=0$. Also, observe that
\begin{align*}
\int_0^\infty y^{1+p-2s_i}|\bar{u}'_{ext}(y)|^p\,\dif y=
\int_0^\tau y^{1+p-2s_i}|\bar{u}'(y)|^p\,\dif y\leq 
c\int_0^\tau y^{1+p-2s_i}\,\dif y<\infty,
\end{align*}
since $1+p-2s_i>-1$ and $\bar{u}'$ is bounded in $[0,\tau]$. 

Then, by the classical Hardy inequality in Lemma \ref{Lem:ClassicalHardyIneq} with $\varepsilon=1+p-2s_i$, we have:
\begin{equation}\label{Proof:Poincare-4}
I_{i2}=|\Omega_i|\int_0^\infty y^{\varepsilon-p}|\bar{u}_{ext}(y)|^p\,dy\leq c\,|\Omega_i|\int_0^\infty y^{\varepsilon}|\bar{u}'_{ext}(y)|^p\,\dif y.
\end{equation}
We now observe that:
\begin{align*}
\int_0^\infty y^{\varepsilon}|\bar{u}'_{ext}(y)|^p\dif y=\,&
\frac{1}{|\Omega_i|^p}\int_0^\tau y^{\varepsilon}\left|\int_{\Omega_i} D_{N+1}u(x,y)\,dx\right|^p\dif y\\
\leq\,&
\frac{1}{|\Omega_i|^p}\int_0^\tau y^{\varepsilon}\left(\int_{\Omega_i} |\nabla u(x,y)|\,dx\right)^p\dif y,
\end{align*}
where $D_{N+1}u$ is the partial derivative of $u$ with respect to the $(N+1)$ coordinate. Then, by the H\"older's inequality on the inner integral, we have:
\begin{align*}
\int_0^\infty y^{\varepsilon}|\bar{u}'_{ext}(y)|^p\dif y\leq\,&
\frac{1}{|\Omega_i|}\int_0^\tau y^{\varepsilon}\int_{\Omega_i} |\nabla v(x,y)|^p\,\dif x\,\dif y. 
\end{align*}
With this estimation in \eqref{Proof:Poincare-4}, we find:
\begin{align}\label{Proof:Poincare-5}
I_{i2}\leq\,&c\,\tau^p\int_0^\tau \int_{\Omega_i} y^{1-2s_i}|\nabla u(x,y)|^p\,\dd x\,\dd y.
\end{align}

Finally, using \eqref{Proof:Poincare-3} and \eqref{Proof:Poincare-5} in \eqref{Proof:Poincare-2}, we obtain:
\begin{equation*}
I_i\leq c\,\tau^p\int_0^\tau \int_{\Omega_i} y^{1-2s_i}|\nabla u(x,y)|^p\,\dif x\,\dif y,
\end{equation*}
and hence, by \eqref{Proof:Poincare-1} and since $\mathrm{G}_s$ is constant, we have:
\begin{equation*}
\int_{\mathcal{C}^\tau}\hspace*{-0.2cm} w(x,y)|u(x,y)|^p\,\dif X
\leq
 c\,\tau^p\hspace*{-0.1cm}\int_{\mathcal{C}^\tau}\hspace*{-0.2cm} w(x,y)|\nabla u(x,y)|^p\,\dif X,
\end{equation*}
for all $u\in C_c^\infty(\mathcal{C}^\tau_\Omega)\cap W^{1,p}(\mathcal{C}^\tau,w)$. Then we obtain \eqref{Eq:PoincareStepBis} by density. 
\end{proof}

\section{Second definition and solution to $(-\Delta)^{s(\cdot)}v=h$}\label{Sect:Elliptic}
We are now in a position to give a new definition for the operator $(-\Delta)^{s(\cdot)}$, and to solve the associated Poisson problem for right hand sides defined on $\Omega$. The arguments below are very similar to those developed in Section \ref{Sect:Abstract} but now we assume some extra conditions on the function $s(\cdot)$ and the domain $\Omega$, which enable a better characterization of the domain of $(-\Delta)^{s(\cdot)}$. We present the ideas for the semi-infinite cylinder $\mathcal{C}$, but the same arguments are valid for a truncated one $\mathcal{C}^\tau$.

From now on, we assume that the functions $s(\cdot)$ and $\mathrm{G}_s$ satisfy hypotheses \ref{H1}, \ref{H2}, and \ref{H3}. Further, we assume that the Poincar\'{e} inequality holds true, that is there exists $C>0$ such that
\begin{equation*} 
\|u\|_{L^2(\mathcal{C},w)}\leq C \|\nabla u\|_{L^2(\mathcal{C},w)},\qquad\forall\,u\in\mathscr{H}^{1,p}_{0,L}(\mathcal{C},w). 
\end{equation*} 
For example, this is satisfied under the assumptions of Theorem \ref{Thm:PoincareStep} (see Theorem \ref{Thm:PoincareStepBis} for the case of a truncated cylinder). In particular, this implies that
\begin{equation*}
\mathscr{H}_{0,L}^{1,2}(\mathcal{C},w)= \mathscr{L}_{0,L}^{1,2}(\mathcal{C},w),
\end{equation*}
algebraically and topologically. We endow $\mathscr{H}_{0,L}^{1,2}(\mathcal{C},w)$ with the norm $\|v\|_{\mathscr{H}_{0,L}^{1,p}(\mathcal{C},w)}:=\|\nabla v\|_{L^2(\mathcal{C},w)}$.
Under the hypotheses  assumed, we have established in Theorem \ref{Thm:FirstTrace} an $\Omega$-trace operator 
\begin{equation}\label{eq:traza}
	\tr:\mathscr{H}_{0,L}^{1,2}(\mathcal{C},w)\to L^2(\Omega,\tilde{w}),
\end{equation}
and proved it is bounded, linear, and  such that $\tr u=u(\,\cdot\,,0)$ for all $u\in W^{1,p}(\mathcal{C},w)\cap C^\infty_c(\mathcal{C}_\Omega)$. Note that this operator is not, however, surjective. Subsequently, consider
\begin{equation*}
	\mathscr{W}_{0}^{1,2}(\mathcal{C},w):=\{u\in \mathscr{H}_{0,L}^{1,2}(\mathcal{C},w): \tr u=0\},
\end{equation*}
which is a closed subspace of $\mathscr{H}_{0,L}^{1,2}(\mathcal{C},w)$. Hence, a space of abstract traces on $\Omega$ of functions in $\mathscr{H}_{0,L}^{1,2}(\mathcal{C},w)$ can be defined as the quotient space 
\begin{equation*}
\mathscr{Y}(\Omega,w):=\mathscr{H}_{0,L}^{1,2}(\mathcal{C},w)/\mathscr{W}_0^{1,2}(\mathcal{C},w).
\end{equation*}
\begin{remark}
Due to the absence of density results of the type ``$H=W$'' for non-Muckenhoupt weights, we are not in a position to assure that the spaces $\mathscr{X}(\Omega,w)$ and $\mathscr{Y}(\Omega,w)$ are actually the same.
\end{remark}

Immediately from here, via the isomorphism theorems, we can argue that there is an isomorphism 
\begin{equation}\label{Eq:Iso}
\varphi:\mathscr{Y}(\Omega,w)\xrightarrow{\sim} \tr \mathscr{H}_{0,L}^{1,2}(\mathcal{C},w).
\end{equation}
Moreover, one can simply consider $\varphi$ to be given by $[u]\mapsto \tr u$. However, in order to identify $\mathscr{Y}(\Omega,w)$ with a subset of functions defined on $\Omega$, we need further information related with the structure of the function space $\tr \mathscr{H}_{0,L}^{1,2}(\mathcal{C},w)$.

Analogously as in Section \ref{Sect:Abstract}, we define 
\begin{equation}
	\TR :\mathscr{H}_{0,L}^{1,2}(\mathcal{C},w)\to \mathscr{Y}(\Omega,w),
\end{equation}
as $\TR u:=[u]$, and observe that $\TR$ is surjective by definition. In this setting we identify the abstract $\Omega$-trace of $u\in \mathscr{H}_{0,L}^{1,2}(\mathcal{C},w)$  with the equivalence class $[u]$ that contains $u$. The space $\mathscr{Y}(\Omega,w)$ is then endowed  with the usual quotient norm
\begin{align*}
\|\TR u\|_{\mathscr{Y}(\Omega,w)}=\|[u]\|_{\mathscr{Y}(\Omega,w)}:=\,&\inf\{ \|u-z\|_{\mathscr{H}_{0,L}^{1,2}(\mathcal{C},w))}:z\in \mathscr{W}_0^{1,2}(\mathcal{C},w))\}.
\end{align*}
As before, we have $\TR \mathscr{L}_{0,L}^{1,2}(\mathcal{C},w)=\mathscr{Y}(\Omega,w)$. Note that $\mathscr{Y}(\Omega,w)$ is a Hilbert space, given that $\mathscr{H}_{0,L}^{1,2}(\mathcal{C},w)$ and $\mathscr{H}_0^{1,2}(\mathcal{C},w)$ are also Hilbert spaces.

Identically as in Theorem \ref{Thm:AbstractMinim}, we argue the existence of the weighted harmonic extension operator 
\begin{equation*}
S: \TR \mathscr{H}_{0,L}^{1,2}(\mathcal{C},w) \to  \mathscr{H}_{0,L}^{1,2}(\mathcal{C},w),\qquad
v\mapsto S(v)=u.
\end{equation*}
where $u$ is the solution to 
\begin{equation*}
\begin{split}
&\mathrm{minimize}\quad J(u)\quad \mathrm{over}\quad  \mathscr{H}_{0,L}^{1,2}(\mathcal{C},w), \\
&\mathrm{subject\:\:to}\quad \TR u=v,
\end{split}
\end{equation*}
for
\begin{equation*}
J(u):=\frac{1}{2}\|u\|^2_{\mathscr{H}_{0,L}^{1,2}(\mathcal{C},w)}=\frac{1}{2}\int_{\mathcal{C}}w|\nabla u|^2\dif X.
\end{equation*}
The well-posedness of the map $S$ allows us to establish a definition for the fractional Laplacian with spatially variable order.
   
\begin{definition}\label{Def:AbstractLaplacian2}
Let $\mathscr{Y}(\Omega,w)'$ be the dual space of $\mathscr{Y}(\Omega,w)$. The operator 
\begin{equation*}
	(-\Delta)^{s(\cdot)}:\mathscr{Y}(\Omega,w)\to\mathscr{Y}(\Omega,w)',
\end{equation*} is determined as follows: for $v \in \mathscr{Y}(\Omega,w)$, then $(-\Delta)^{s(\cdot)}v\in \mathscr{Y}(\Omega,w)'$ is defined by 
\begin{equation*}\label{Eq:AbstractLaplacian2}
\langle(-\Delta)^{s(\cdot)}v, \TR\psi \rangle_{\mathscr{Y}',\mathscr{Y}}=\displaystyle\int_{\mathcal{C}}w\,\nabla S(v)\cdot\nabla \psi\,\dd X,\qquad\qquad\psi\in \mathscr{H}_{0,L}^{1,2}(\mathcal{C},w).
\end{equation*}
\end{definition}
Since Proposition \ref{prop:Lagrange} holds true with the usual changes, the operator is then well-defined and  Theorem \ref{Thm:AbstractElliptic} is also proven mutatis mutandis: For a $h\in \mathscr{Y}(\Omega,w)'$, the equation 
\begin{equation}\label{Eq:AbstractEq2}
(-\Delta)^{s(\cdot)}v=h \quad \mbox{in } \Omega,
\end{equation}
admits a unique solution $v\in \mathscr{Y}(\Omega,w)$ that is given by $v=\TR\,u$, where $u$ solves
\begin{align}\label{Eq:AbstractMinEllip2}
\mathrm{minimize}\quad \mathcal{J}(u)\quad \mathrm{over}\quad \mathscr{H}_{0,L}^{1,2}(\mathcal{C},w),
\end{align}
for 
\begin{equation*}
\mathcal{J}(u):=\frac{1}{2}\displaystyle\int_{\mathcal{C}}w\,|\nabla u|^2\,\dd X-\langle h, \Tr u\rangle_{\mathscr{Y}', \mathscr{Y}}.
\end{equation*}

Although this approach seems equivalent to the one in Section \ref{Sect:Abstract}, in this setting we have a more detailed representation of the elements $\mathscr{Y}(\Omega,w)$. In fact, within this approach, there exists an injection 
\begin{equation*}
I:\mathscr{Y}(\Omega,w)\to L^2(\Omega,\tilde{w}),\qquad
u\mapsto I([u])=\tr u,
\end{equation*}
which is linear and bounded. Linearity follows directly, and boundedness follows given that for arbitrary $z\in \mathscr{W}_{0}^{1,2}(\mathcal{C},w)$,
\begin{equation*}
\|I([u])\|_{L^2(\Omega,\tilde{w})}=\|\tr u \|_{L^2(\Omega,\tilde{w})}=\|\tr (u-z)\|_{L^2(\Omega,\tilde{w})}\leq C \|u-z\|_{\mathscr{H}_{0,L}^{1,2}(\mathcal{C},w)},
\end{equation*}
where we have used the linearity of $\tr$ and that $\tr z=0$, and then
\begin{equation*}
	\|I([u])\|_{L^2(\Omega,\tilde{w})}\leq C \inf_{y\in \mathscr{W}_{0}^{1,2}(\mathcal{C},w)}  \|u-z\|_{\mathscr{H}_{0,L}^{1,2}(\mathcal{C},w)}=C \|[u]\|_{\mathscr{Y}(\Omega,w)}.
\end{equation*}
In order to see that $I$ is an injection, suppose that $I([u])=0$, then $\tr u=0$ so that $u\in \mathscr{W}_{0}^{1,2}(\mathcal{C},w)$, and the class $\mathscr{W}_{0}^{1,2}(\mathcal{C},w)$ is the zero element of $\mathscr{Y}(\Omega,w)$. This identification allows us to consider $I$ to be the identity, and identify the continuous embedding 
\begin{equation*}
\mathscr{Y}(\Omega,w)\hookrightarrow L^2(\Omega,\tilde{w}).
\end{equation*} 
For a schematic relationship between the trace operators $\tr$, $\Tr$, the isomorphism $\varphi$ and the embedding $I$, see Figure \ref{fig:diagram}. 
An amenable consequence of this identification is given in Theorem \ref{Thm:AbstractElliptic2}, however in first place we address the reduction to case where $s(\cdot)=s\in (0,1)$, a constant, where we obtain that $H$ is recovered as the domain of $(-\Delta)^s$.

\begin{figure}[!h]
\centering\label{fig:diagram}
\begin{tikzpicture}
  \matrix (m)
    [
      matrix of math nodes,
      row sep    = 4em,
      column sep = 12em
    ]
    {
      \mathscr{H}_{0,L}^{1,2}(\mathcal{C},w)              &  \tr \mathscr{H}_{0,L}^{1,2}(\mathcal{C},w) 
\\
      \mathscr{Y}(\Omega,w) &  \begin{cases}
        L^2(\Omega,\tilde{w}), \qquad \text { if   \ref{H1}-\ref{H3}}\\ 
        \mathbb{W}^{s(\cdot),2}(\mathrm{Q}_N,\tilde{w},w_1,\hdots,w_N), \text { if   \ref{H1}-\ref{H5}}\\
        \end{cases}           \\
    };
  \path
    (m-1-1) edge [->] node [left] {$\TR$} (m-2-1)
    (m-1-1.east |- m-1-2)
      edge [->] node [above] {$\tr$} (m-1-2)
    (m-2-1.east) edge [yshift=.3cm, ->] node [below,rotate=20,xshift=.25cm] {$\varphi$} node [above,rotate=20,xshift=.25cm] {$\simeq$} (m-1-2)
    (m-2-1.east) edge [{Hooks[right,length=0.8ex]}->,yshift=-.10cm] node [below,yshift=.50cm] {$I$}  (m-2-2);
     (m-2-2.east) edge [->] node [right] {\hspace{2cm}$\bigcap$}  (m-1-2);
\end{tikzpicture}
\caption{Diagram relating the operators $\tr, \TR$, the isomorphism $\varphi$, and the operator $I$.}
\end{figure}

\begin{theorem}

\label{thm:surjectivitytrace}
Let $s(\cdot)=s\in(0,1)$ be constant and suppose that functions in $\mathscr{H}_{0,L}^{1,2}(\mathcal{C},w)$ satisfy a Poincar\'e inequality, then
\begin{equation*}
\tr \mathscr{H}_{0,L}^{1,2}(\mathcal{C},w)=H,
\end{equation*} 
and therefore, 
\begin{equation*}
\mathscr{Y}(\Omega,w)\simeq H.
\end{equation*}
\end{theorem}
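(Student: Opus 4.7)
The plan is to reduce everything to the classical trace result \cite[Proposition 2.1]{CaEtAl2011}, which asserts that $\tr H^1_{0,L}(\mathcal{C},y^{1-2s})=H$, and then invoke the isomorphism $\varphi$ established in \eqref{Eq:Iso}. Recall that $\mathscr{Y}(\Omega,w)=\mathscr{H}_{0,L}^{1,2}(\mathcal{C},w)/\mathscr{W}_0^{1,2}(\mathcal{C},w)$ where $\mathscr{W}_0^{1,2}(\mathcal{C},w)$ is by definition the kernel of $\tr$. By the first isomorphism theorem the map $[u]\mapsto \tr u$ yields an isometric isomorphism between $\mathscr{Y}(\Omega,w)$ and the range $\tr\mathscr{H}_{0,L}^{1,2}(\mathcal{C},w)$, so the conclusion $\mathscr{Y}(\Omega,w)\simeq H$ follows at once once we have proved that $\tr\mathscr{H}_{0,L}^{1,2}(\mathcal{C},w)=H$. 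Thus the real content is the identification of the trace space.

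The first step is to identify $\mathscr{H}_{0,L}^{1,2}(\mathcal{C},w)$ with $H^1_{0,L}(\mathcal{C},y^{1-2s})$ (up to the multiplicative constant $\mathrm{G}_s$, which is irrelevant for set equalities and only rescales norms). For constant $s\in(0,1)$ we have $|1-2s|<1$, so the weight $y^{1-2s}$ belongs to the Muckenhoupt class $A_2$ on $\mathbb{R}$, and consequently $w(x,y)=\mathrm{G}_s\,y^{1-2s}$ is an $A_2$ weight on $\mathbb{R}^{N+1}$. In this Muckenhoupt setting one has a bona fide ``$H=W$'' result: smooth functions are dense in $W^{1,2}(\mathcal{C},w)$, and more specifically smooth functions with compact support in $\mathcal{C}_\Omega$ are dense in the subspace $H^1_{0,L}(\mathcal{C},w)$ of functions with vanishing lateral trace. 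This density is the standard argument combining mollification (admissible in an $A_2$ environment) with a cut-off procedure near $\partial_L\mathcal{C}$; the Poincar\'e inequality assumed in the statement ensures that the two natural norms (the full $W^{1,2}$ norm and the gradient semi-norm) are equivalent on the limit space, so the two completions coincide algebraically and topologically.

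The second step is then immediate: once $\mathscr{H}_{0,L}^{1,2}(\mathcal{C},w)=H^1_{0,L}(\mathcal{C},y^{1-2s})$, the surjectivity statement of \cite[Proposition 2.1]{CaEtAl2011} recalled in Section~\ref{Sect:Constant} (namely $\tr H^1_{0,L}(\mathcal{C},y^{1-2s})=H$) yields $\tr\mathscr{H}_{0,L}^{1,2}(\mathcal{C},w)=H$. Combining with the isomorphism $\varphi:\mathscr{Y}(\Omega,w)\xrightarrow{\sim}\tr\mathscr{H}_{0,L}^{1,2}(\mathcal{C},w)$ from \eqref{Eq:Iso} gives $\mathscr{Y}(\Omega,w)\simeq H$, as required.

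The main obstacle is the density identification in the first step. It is precisely the $A_2$ property that is lost when $s(\cdot)$ becomes variable and is allowed to touch $0$ or $1$ (see \cite[Proposition~1]{AnRa2019}); in that generality no analogous ``$H=W$'' result is available, which is why $\mathscr{X}(\Omega,w)$ and $\mathscr{Y}(\Omega,w)$ cannot in general be identified (cf.\ the remark preceding \eqref{Eq:Iso}). In the present constant-order case, however, the $A_2$ machinery applies and the argument goes through without additional assumptions beyond the standing Poincar\'e inequality.
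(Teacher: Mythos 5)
Your outline is correct, and your reduction to the trace result of \cite[Proposition 2.1]{CaEtAl2011} together with the first isomorphism theorem for $\varphi$ matches the logical skeleton of the paper's argument. However, the load-bearing step in your plan is the identification $\mathscr{H}_{0,L}^{1,2}(\mathcal{C},w)=H^1_{0,L}(\mathcal{C},y^{1-2s})$, which you dispatch as ``the standard argument combining mollification with a cut-off procedure near $\partial_L\mathcal{C}$.'' That step is in fact the entire technical content of the theorem and cannot be quoted off the shelf: one must approximate a function merely known to have zero lateral trace by smooth functions whose support is \emph{uniformly away} from $\partial_L\mathcal{C}$, on an \emph{unbounded} cylinder with a genuine corner at $\partial\Omega\times\{0\}$. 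The standard Meyers--Serrin ``$H=W$'' for $A_p$-weighted Sobolev spaces on a bounded Lipschitz domain gives density of $C^\infty(\overline{U})$, not density of functions vanishing near a given portion of the boundary, and extending it to the semi-infinite cylinder with the lateral-trace constraint is precisely what needs proof.

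What the paper actually does is weaker, and avoids this general density claim: it does not prove $\mathscr{H}_{0,L}^{1,2}=H^1_{0,L}$, but instead shows directly that the $s$-harmonic extension $u=\sum_n b_n\varphi_n(x)g_n(y)$ of an arbitrary $v\in H$ belongs to $\mathscr{H}_{0,L}^{1,2}(\mathcal{C},y^{1-2s})$. It does so constructively: approximate $v$ by $v_k\in C^\infty_c(\Omega)$, form the corresponding Bessel-function extensions $u_k$, truncate in $y$ with a smooth cut-off $\eta_\tau$, reflect across $\{y=0\}$ to a bounded shifted cylinder where $|y|^{1-2s}$ is an explicit $A_2$ weight, and then mollify; the Poincar\'e inequality is used to control convergence in the gradient seminorm. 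So the two proofs share the same tools (Muckenhoupt mollification, $y$-truncation, lateral-boundary handling), but the paper only approximates the specific harmonic extensions rather than establishing the full space equality you assert. If you want to keep your plan, you need to actually prove the density statement, and that proof would in effect reproduce the paper's Steps~2--4 in greater generality; if you are willing to settle for less, the paper's explicit construction is the economical route. Be sure also to separate the two inclusions: $\tr\mathscr{H}_{0,L}^{1,2}\subset H$ already follows from $\mathscr{H}_{0,L}^{1,2}\subset H^1_{0,L}$ and Remark~\ref{RK:Classic1}, and is not the issue; only the surjectivity $\tr\mathscr{H}_{0,L}^{1,2}\supset H$ requires work.
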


\begin{proof}
Given that $s(\cdot)=s\in(0,1)$ is constant, we have that $\mathrm{G}$ is constant, and hence $\mathscr{H}_{0,L}^{1,2}(\mathcal{C},w)=\mathscr{H}_{0,L}^{1,2}(\mathcal{C},y^{1-2s})$. Additionally, by Remark \ref{RK:Classic1}, we have that $\tr \mathscr{H}_{0,L}^{1,2}(\mathcal{C},y^{1-2s})\subset H$. Then, there is  only left to prove that for each $v\in H$ there exists a sequence $\{u_n\}$ in $C^\infty_c(\mathcal{C}_\Omega)\cap W^{1,2}(\mathcal{C},y^{1-2s})$ convergent in the sense of $W^{1,2}(\mathcal{C},y^{1-2s})$ to a $u\in \mathscr{H}_{0,L}^{1,2}(\mathcal{C},y^{1-2s})$, and such that $\tr u= v$. We divide the proof into four steps for the sake of clarity.
	
	\textit{Step 1:} Let $v\in H$ be arbitrary. Since $H=\mathbb{H}_0^s(\Omega)$ for $s\in(0,1/2)$ or $s\in(1/2,1)$, and $H=\mathbb{H}_{00}^s(\Omega)$ for $s=1/2$, it follows that $C_c^\infty(\Omega)$ is dense in $H$. Then, there exists a sequence $\{v_k\}$ in $C_c^\infty(\Omega)$ such that 
	\begin{equation*}
		v_k\to v \qquad \text{ in } H,
	\end{equation*}
	as $k\to \infty$. We denote $v=\sum_{n=1}^\infty b_n\varphi_n$ and $v_k=\sum_{n=1}^\infty b^k_n\varphi_n$ to their spectral decomposition where $b^k_n\to b_n$ as $k\to\infty$. Further, define $u,u_k:\mathcal{C}\to\mathbb{R}$ by 
	\begin{equation*}
		u(x,y)=\sum_{n=1}^\infty b_n\varphi_n(x)g_n(y) \quad\text{and}\quad u_k(x,y)=\sum_{n=1}^\infty b^k_n\varphi_n(x)g_n(y),
	\end{equation*}
	where each $g_n$ satisfies the Bessel equation:
	\begin{align*}
		g_n''+\frac{1-2s}{y}g_n'-\lambda_kg_n&=0 \qquad \text{ in }\quad (0,+\infty),\\
		g_n(0)&=1,\\
		g_n(+\infty)&=0.
	\end{align*}
	Since the Poincar\'{e} inequality is valid for functions in $W^{1,2}(\mathcal{C},y^{1-2s})$, by the construction of the proof in \cite[Proposition 2.1]{CaEtAl2011}, we have that $u,u_n\in W^{1,2}(\mathcal{C},y^{1-2s})$, and
	\begin{equation*}
		\int_0^\infty\int_\Omega y^{1-2s}|\nabla u(x,y)-\nabla u_k(x,y)|^2\dif x \dif y=c_{N,s}\sum _{k=1}^{+\infty}(b_n-b_n^k)^2\mu_n^s=c_{N,s}\|v-v_k\|^2_H,
	\end{equation*}
	and thus
	\begin{equation}\label{eq:vnconv}
		u_k\to u \qquad \text{ in } W^{1,2}(\mathcal{C},y^{1-2s}),
	\end{equation}
	as $k\to \infty$. Note that since $v_k$ has compact support, the support of $u_k$ is uniformly away from $\partial_L \mathcal{C}$.
	
	\textit{Step 2:} For $\tau\geq 1$ and $0<\sigma<1$, we consider a smooth non-increasing function $\eta_\tau:\mathbb{R}^+\to [0,1]$ such that:
\begin{equation*}
\eta_\tau(y)=1\quad\text{if}\quad 0<y<\tau-\sigma,\qquad\qquad
\eta_\tau(y)=0\quad\text{if}\quad y>\tau,
\end{equation*}
and notice that the function $u_{k,\tau}(x,y):=\eta_\tau(y)u_k(x,y)$ belongs to $W^{1,2}(\mathcal{C},y^{1-2s})$. By direct calculation we have that 
	\begin{equation}\label{eq:vntconv}
		u_{k,\tau}\to u_n \qquad \text{ in } W^{1,2}(\mathcal{C},y^{1-2s}),
	\end{equation}
	as $\tau\to \infty$.
	
	\textit{Step 3:}  For $0<\varepsilon\ll 1$ and $\tau'>\tau+\varepsilon$, we consider the shifted cylinder
\begin{equation*}
\mathcal{C}_\varepsilon^{\tau'}:=\{(x,y-\varepsilon): (x,y)\in \mathcal{C}^{\tau'} \},
\end{equation*}
and the weighted space $W^{1,2}(\mathcal{C}_\epsilon^{\tau'},\rho)$, where
\begin{equation*}
\rho(x,y)=
\left\{\begin{array}{ccl}
y^{1-2s}&\text{if}& \,\,\,\,0<y<\tau'-\varepsilon,\\
(-y)^{1-2s}&\text{if}&-\varepsilon<y< 0.
\end{array}\right.
\end{equation*}
Further, let $\hat{u}_{k,\tau}\in W^{1,2}(\mathcal{C}_\varepsilon^{\tau'},\rho)$ defined by reflection as
\begin{equation*}
\hat{u}_{k,\tau}(x,y)=
\left\{\begin{array}{ccl}
u_{k,\tau}(x,y)&\text{if}& \,\,\,\,0<y<\tau'-\varepsilon,\\
u_{k,\tau}(x,-y)&\text{if}&-\varepsilon<y\leq 0,
\end{array}\right.
\end{equation*}
and note  that $\rho\in A_2(\mathcal{C}_\varepsilon^{\tau'})$, i.e.,
 	\begin{equation*}
 		\sup_{B\subset \mathcal{C}_\varepsilon^{\tau'}}\left(\frac{1}{|B|}\int_B \rho \dif X\right)\left(\frac{1}{|B|}\int_B \rho ^{-1} \dif X\right)^{}<+\infty,
 	\end{equation*}
 	for all squares $B\subset C^{\tau'}_\varepsilon$. 
 	
 	Let $L_r$ be the usual mollifier operator, i.e.,
 	\begin{equation*}
 		(L_r f)(x)=\frac{1}{r^{N+1}}\int_{\mathbb{R}^{N+1}}\omega\left(\frac{x-z}{r}\right) f(z)\dif z,
 	\end{equation*}
 	where $\omega:\mathbb{R}^{N+1}\to [0,+\infty)$ belongs to $C^\infty(\mathbb{R}^{N+1})$, $\mathrm{supp}\,\omega\subset \overline{B(0,1)}$, and $\int_{\mathbb{R}^{N+1}} \omega =1$. Since $\rho\in A_2(\mathcal{C}_\varepsilon^{\tau'})$ and since $\mathcal{C}_\varepsilon^{\tau'}$ is bounded and with Lipschitz boundary, it follows that for $f\in W^{1,2}(\mathcal{C}_\varepsilon^{\tau'},\rho)$, 
 	\begin{equation*}
 		L_rf\to f \qquad \text{ in } \quad W^{1,2}(D,y^{1-2s}),
 	\end{equation*}
 	for any $D\subset\subset \mathcal{C}_\varepsilon^{\tau'}$; see \cite{gol2009weighted}. Given that the support of $u_{k,\tau}$ is uniformly away from $\partial_L\mathcal{C}$, and that $u_{k,\tau}=0$ if $\tau<y$, it follows that
 	 	\begin{equation}\label{eq:vntrconv}
 		L_r\hat{u}_{k,\tau}\to u_{k,\tau} \qquad \text{ in } \quad W^{1,2}(\mathcal{C},y^{1-2s}),
 	\end{equation}
 	as $r\to\infty$. Note in addition, that for sufficiently large $r>0$, we have
 	 	 	\begin{equation*}
 		L_r\hat{u}_{k,\tau}\in C_c^{\infty}(\mathcal{C}_\Omega)\subset \mathscr{H}_{0,L}^{1,2}(\mathcal{C},y^{1-2s}).
 	\end{equation*}

\textit{Step 4:} In view of \eqref{eq:vnconv}, \eqref{eq:vntconv}, and \eqref{eq:vntrconv}, by appropriately selecting a sequence $\{(r_i,k_i,\tau_i)\}_{i=1}^\infty$, we observe
\begin{equation*}
	\tilde{u}_i:=L_{r_i}\hat{u}_{{k_i},{\tau_i}}\to u \qquad \text{ in } \quad W^{1,2}(\mathcal{C},y^{1-2s}),
\end{equation*}
as $i\to \infty$, so that $u\in \mathscr{H}_{0,L}^{1,2}(\mathcal{C},y^{1-2s})$. In particular, 
\begin{equation*}
	\tr \tilde{u}_i\to v \qquad \text{ in } L^2(\Omega),
\end{equation*}
and $v=\tr u$; the result is then proven.
\end{proof}

Next we can establish the well-posedness of the elliptic equation of interest.

\begin{theorem}\label{Thm:AbstractElliptic2} 
Assume that {\normalfont\ref{H1}}, {\normalfont\ref{H2}}, and {\normalfont\ref{H3}} holds true, and functions in $\mathscr{H}_{0,L}^{1,2}(\mathcal{C},w)$ satisfy a Poincar\'e inequality. For every $h\in L^2(\Omega,\tilde{w})$, the equation 
\begin{equation}\label{Eq:AbstractEq3}
(-\Delta)^{s(\cdot)}v=h,
\end{equation}
admits a unique solution $v\in\mathscr{Y}(\Omega,w)\subset L^2(\Omega,\tilde{w})$.  
\end{theorem}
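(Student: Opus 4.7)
The plan is to mirror the proof of Theorem~\ref{Thm:AbstractElliptic}, the key new ingredient being that the continuous embedding $I:\mathscr{Y}(\Omega,w)\hookrightarrow L^2(\Omega,\tilde{w})$, established just before the statement, lets us regard any $h\in L^2(\Omega,\tilde{w})$ as an element of the dual $\mathscr{Y}(\Omega,w)'$. Once this identification is made, the abstract solvability result adapts directly. Concretely, I would first define $\ell_h:\mathscr{Y}(\Omega,w)\to\mathbb{R}$ by
$$\ell_h(v):=\int_\Omega h(x)\,I(v)(x)\,\tilde{w}(x)\,\mathrm{d}x,\qquad v\in\mathscr{Y}(\Omega,w),$$
and verify boundedness via Cauchy--Schwarz together with continuity of $I$:
$$|\ell_h(v)|\leq \|h\|_{L^2(\Omega,\tilde{w})}\|I(v)\|_{L^2(\Omega,\tilde{w})}\leq C\|h\|_{L^2(\Omega,\tilde{w})}\|v\|_{\mathscr{Y}(\Omega,w)}.$$
Identifying $h$ with $\ell_h$ reduces the problem to the abstract equation $(-\Delta)^{s(\cdot)}v=h$ in $\mathscr{Y}(\Omega,w)'$.

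Next I would solve the variational problem: minimize
$$\mathcal{J}(u):=\frac{1}{2}\int_{\mathcal{C}}w\,|\nabla u|^2\,\mathrm{d}X\;-\;\int_\Omega h(x)\,\tr u(x)\,\tilde{w}(x)\,\mathrm{d}x$$
over $\mathscr{H}_{0,L}^{1,2}(\mathcal{C},w)$. The linear term is continuous on $\mathscr{H}_{0,L}^{1,2}(\mathcal{C},w)$: Theorem~\ref{Thm:FirstTrace} followed by Cauchy--Schwarz bounds its absolute value by $C\|h\|_{L^2(\Omega,\tilde{w})}\|u\|_{\mathscr{H}_{0,L}^{1,2}(\mathcal{C},w)}$. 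The quadratic part is equivalent to the full norm in view of the standing Poincar\'e inequality, so coupled with Young's inequality $\mathcal{J}$ is coercive and strictly convex; weak lower semicontinuity is routine. The direct method thus produces a unique minimizer $u$ whose first-order optimality conditions read
$$\int_\mathcal{C} w\,\nabla u\cdot\nabla\psi\,\mathrm{d}X=\int_\Omega h\,\tr\psi\,\tilde{w}\,\mathrm{d}x,\qquad\forall\,\psi\in\mathscr{H}_{0,L}^{1,2}(\mathcal{C},w).$$

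I would then set $v:=\TR u\in\mathscr{Y}(\Omega,w)$ and argue that $v$ solves the equation. Testing the optimality condition against $\psi\in\mathscr{W}_0^{1,2}(\mathcal{C},w)$ kills the right-hand side (since $\tr\psi=0$ on such $\psi$), which shows that $u$ satisfies the first-order conditions for its own harmonic extension problem, so by uniqueness $u=S(v)$ --- exactly the step used inside the proof of Theorem~\ref{Thm:AbstractElliptic}. Invoking Definition~\ref{Def:AbstractLaplacian2} together with the surjectivity of $\TR$ then yields
$$\langle(-\Delta)^{s(\cdot)}v,\TR\psi\rangle_{\mathscr{Y}',\mathscr{Y}}=\int_\mathcal{C} w\,\nabla S(v)\cdot\nabla\psi\,\mathrm{d}X=\int_\Omega h\,\tr\psi\,\tilde{w}\,\mathrm{d}x=\langle h,\TR\psi\rangle_{\mathscr{Y}',\mathscr{Y}}$$
for every $\psi\in\mathscr{H}_{0,L}^{1,2}(\mathcal{C},w)$, i.e.\ $(-\Delta)^{s(\cdot)}v=h$ in $\mathscr{Y}(\Omega,w)'$. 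Uniqueness follows verbatim from the argument at the end of Theorem~\ref{Thm:AbstractElliptic}: if $(-\Delta)^{s(\cdot)}v=0$, then $S(v)$ satisfies the first-order conditions of the homogeneous minimization whose unique solution is $0$, so $v=\TR S(v)=0$. Since $\mathscr{Y}(\Omega,w)\hookrightarrow L^2(\Omega,\tilde{w})$, we also have $v\in L^2(\Omega,\tilde{w})$.

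There is no truly hard analytic step here: the heavy lifting (the trace theorem, the Poincar\'e inequality, well-posedness of the extension operator $S$, and the isomorphism characterization of $\mathscr{Y}(\Omega,w)$) is already in place. The only delicate point requiring genuine care is the consistency between the two readings of the right-hand side --- as an element of $L^2(\Omega,\tilde{w})$ pointwise-paired with $\tr u$, and as an element of $\mathscr{Y}(\Omega,w)'$ paired with $\TR\psi$ --- which is precisely mediated by $I$ and the compatibility $I\circ\TR=\tr$. Ensuring that this compatibility is respected in the identification $h\leftrightarrow\ell_h$ is the step I would write out most carefully.
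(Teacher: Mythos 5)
Your proposal is correct and follows essentially the same route as the paper: the paper's own proof is a two-line reduction that regards $h\in L^2(\Omega,\tilde w)\simeq L^2(\Omega,\tilde w)'$ as an element of $\mathscr{Y}(\Omega,w)'$ via the transpose $I'$ of the embedding $I$, and then invokes the abstract solvability result adapted to $\mathscr{Y}(\Omega,w)$. You have simply unpacked that reduction in full --- defining $\ell_h$, checking its boundedness, re-deriving the variational problem and its first-order conditions, and confirming through $I\circ\mathrm{TR}_\Omega=\mathrm{tr}_\Omega$ that the two readings of the right-hand side agree --- which is exactly the content the paper leaves implicit in the phrase ``by means of $I'$.''
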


\begin{proof}
The conclusion follows from the existence and uniqueness of solution to the same problem with right hand side in $\mathscr{Y}(\Omega,w)$ since we identify $ L^2(\Omega,\tilde{w})'\simeq  L^2(\Omega,\tilde{w})$, so that $ L^2(\Omega,\tilde{w})\subset \mathscr{Y}(\Omega,w)$ by means of $I':L^2(\Omega,\tilde{w})\to \mathscr{Y}(\Omega,w)'$.
\end{proof}

The result above can be refined in terms of regularity if in addition we observe \ref{H5}, and consider $\Omega=Q_N$. In this case, the injection $I$ is given as
\begin{equation*}
	I:\mathscr{Y}(Q_N,w)\to \mathbb{W}^{s(\cdot),2}(\mathrm{Q}_N,\tilde{w},w_1,\hdots,w_N),
\end{equation*} 
leading to our last theorem, whose proof is obtained as for Theorem \ref{Thm:AbstractElliptic2}. 
 
\begin{theorem}\label{Thm:AbstractElliptic3} 
In addition to the hypotheses of Theorem \ref{Thm:AbstractElliptic2}, consider $\Omega=\mathrm{Q}_N$ and assume that {\normalfont\ref{H5}} holds true.  Then, for every 
\begin{equation*}
	h\in \mathbb{W}^{s(\cdot),2}(\mathrm{Q}_N,\tilde{w},w_1,\hdots,w_N)',
\end{equation*} the equation \eqref{Eq:AbstractEq3}
admits a unique solution 
\begin{equation*}
	v\in\mathscr{Y}(\Omega,w)\subset \mathbb{W}^{s,2}(\mathrm{Q}_N,\tilde{w},w_1,\hdots,w_N).
\end{equation*}
\end{theorem}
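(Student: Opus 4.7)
The plan is to repeat the argument of Theorem \ref{Thm:AbstractElliptic2} after upgrading the target space of the injection $I$. Under \ref{H1}--\ref{H3} only, one has $\mathrm{tr}:\mathscr{H}_{0,L}^{1,2}(\mathcal{C},w)\to L^{2}(\Omega,\tilde{w})$ from Theorem \ref{Thm:FirstTrace}. With the additional assumptions $\Omega=\mathrm{Q}_N$ and \ref{H5} now in force, Theorem \ref{Thm:SecondTrace} provides a continuous operator $\mathrm{tr}_{\mathrm{Q}_N}:\mathscr{H}_{0,L}^{1,2}(\mathcal{C},w)\to \mathbb{W}^{s(\cdot),2}(\mathrm{Q}_N,\tilde{w},w_1,\dots,w_N)$, which coincides with $\mathrm{tr}$ on the dense subset $C^{\infty}_{c}(\mathcal{C}_{\mathrm{Q}_N})\cap W^{1,2}(\mathcal{C},w)$ and therefore on the whole space by continuity.

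Next, I would pass this improved trace to the quotient. Define
\begin{equation*}
I:\mathscr{Y}(\mathrm{Q}_N,w)\to \mathbb{W}^{s(\cdot),2}(\mathrm{Q}_N,\tilde{w},w_1,\dots,w_N),\qquad I([u]):=\mathrm{tr}_{\mathrm{Q}_N}u.
\end{equation*}
Well-definedness is inherited from the fact that $\mathrm{tr}_{\mathrm{Q}_N}$ agrees with $\mathrm{tr}$, so any $z\in \mathscr{W}_{0}^{1,2}(\mathcal{C},w)$ satisfies $\mathrm{tr}_{\mathrm{Q}_N}z=0$. Linearity is immediate; boundedness follows exactly as in Theorem \ref{Thm:AbstractElliptic2}, since for any representative $u$ of $[u]$ and any $z\in\mathscr{W}_{0}^{1,2}(\mathcal{C},w)$,
\begin{equation*}
\|I([u])\|_{\mathbb{W}^{s(\cdot),2}}=\|\mathrm{tr}_{\mathrm{Q}_N}(u-z)\|_{\mathbb{W}^{s(\cdot),2}}\leq C\|u-z\|_{\mathscr{H}_{0,L}^{1,2}(\mathcal{C},w)},
\end{equation*}
and taking the infimum over $z$ yields $\|I([u])\|_{\mathbb{W}^{s(\cdot),2}}\leq C\|[u]\|_{\mathscr{Y}(\mathrm{Q}_N,w)}$. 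Injectivity follows as before: $I([u])=0$ forces $\mathrm{tr}\,u=0$, whence $u\in\mathscr{W}_{0}^{1,2}(\mathcal{C},w)$ and $[u]=0$. Hence $\mathscr{Y}(\mathrm{Q}_N,w)\hookrightarrow \mathbb{W}^{s(\cdot),2}(\mathrm{Q}_N,\tilde{w},w_1,\dots,w_N)$.

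Dualising this embedding produces a continuous map
\begin{equation*}
I':\mathbb{W}^{s(\cdot),2}(\mathrm{Q}_N,\tilde{w},w_1,\dots,w_N)'\to \mathscr{Y}(\mathrm{Q}_N,w)'.
\end{equation*}
Given $h\in \mathbb{W}^{s(\cdot),2}(\mathrm{Q}_N,\tilde{w},w_1,\dots,w_N)'$, the functional $I'(h)$ lies in $\mathscr{Y}(\mathrm{Q}_N,w)'$, so the version of Theorem \ref{Thm:AbstractElliptic} proved in Section \ref{Sect:Elliptic} for $\mathscr{Y}$ (in the paragraph following Definition \ref{Def:AbstractLaplacian2}) delivers a unique $v\in\mathscr{Y}(\mathrm{Q}_N,w)$ with $(-\Delta)^{s(\cdot)}v=I'(h)$. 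Under the identification provided by $I$, this $v$ is naturally an element of $\mathbb{W}^{s(\cdot),2}(\mathrm{Q}_N,\tilde{w},w_1,\dots,w_N)$, which gives the stated conclusion.

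The only delicate step is ensuring that the dual pairing makes sense consistently: one needs $I'(h)$ acting on $\Tr\psi$ to reproduce the pairing $\langle h,\mathrm{tr}_{\mathrm{Q}_N}\psi\rangle$ in the variational formulation $\mathcal{J}(u)=\tfrac12\int_{\mathcal{C}}w|\nabla u|^{2}\,\dd X-\langle h,\mathrm{tr}_{\mathrm{Q}_N}u\rangle$. This is automatic from $I\circ\Tr=\mathrm{tr}_{\mathrm{Q}_N}$, which is where the identification of $\mathrm{tr}$ and $\mathrm{tr}_{\mathrm{Q}_N}$ (both agreeing on the common dense core) is essential; without this identification the two traces could differ and the quotient would fail to map consistently into the refined space.
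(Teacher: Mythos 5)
Your proposal is correct and matches the paper's intended argument exactly: the paper disposes of this theorem in one line, noting that under \ref{H5} and $\Omega=\mathrm{Q}_N$ the injection $I$ from $\mathscr{Y}(\mathrm{Q}_N,w)$ now lands in $\mathbb{W}^{s(\cdot),2}(\mathrm{Q}_N,\tilde{w},w_1,\dots,w_N)$ (via Theorem \ref{Thm:SecondTrace}), and that one then repeats verbatim the dualisation argument of Theorem \ref{Thm:AbstractElliptic2}. Your write-up simply spells out the details that the paper leaves implicit (well-definedness of $I$ on the quotient, boundedness, injectivity, and the adjoint identity $\langle I'(h),\Tr\psi\rangle=\langle h,\mathrm{tr}_{\mathrm{Q}_N}\psi\rangle$), all of which are accurate.
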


The problem in the truncated cylinder $\mathcal{C}^\tau$ is treated identically, and Theorem \ref{Thm:AbstractElliptic2} and Theorem \ref{Thm:AbstractElliptic3} still hold true under the obvious changes.


\section{Conclusions and open questions}\label{Sect:Conclusions}
This paper continues the program initiated  in \cite{AnRa2019} and provides a rigorous definition 
of the variable order fractional Laplacian. The proposed theoretical framework enables solutions to Poisson equation 
on bounded Lipschitz domains $\Omega$. The techniques introduced in the paper are  new and 
none of the existing works applies to our setting. {However, the existing setting, where $s(\cdot)$ is a constant, can be 
recovered from our proofs as a special case.}

The following are open questions and topics for future research:
\begin{itemize}

\item The study of $-\Delta^{s(\cdot)}$ as regularizer in optimization problem, i.e., 
\begin{equation*}
	\min_u J(u)+\gamma R(u) \qquad \text{with}\quad R(u)=\langle(-\Delta)^{s(\cdot)}u, u \rangle_{\mathscr{Y}',\mathscr{Y}},
\end{equation*}
and the optimal selection of $s(\cdot)$ in a bilevel framework.

	\item The extension to more general settings of the Poincar\'e inequality type result presented in 
		Section~\ref{Sect:PoincareIneq}.
		
	\item The surjectivity of the new trace operator is still open (cf.~Remark~\ref{rem:surj}).

	\item We have introduced Sobolev spaces with $s(\cdot)$-dependent weights for the extension 
	problem and $s(\cdot)$-dependent differentiability for the space on $\Omega$. New approaches need 
	to be established to prove additional regularity of solutions to $(-\Delta)^{s(\cdot)}u=h$ in these Sobolev spaces.

	\item Extensions to parabolic, semilinear and obstacle type problems are of interest. 

	\item { The authors in \cite{AnRa2019} proposed a numerical method for the truncated problem. 
		 But the numerical analysis of this problem is completely open. Also, convergence of the
 		 truncated solution to the full solution is of interest as well.}

	\item Optimal control problems with variable order PDEs as constraints. 
\end{itemize}

\bibliographystyle{plain} 
\bibliography{References}


\end{document}